\newcounter{thmctr}
\newtheorem{thm}[thmctr]{Theorem}
\newtheorem{lemma}[thmctr]{Lemma}
\newtheorem{prop}[thmctr]{Proposition}
\newtheorem{cor}[thmctr]{Corollary}
\newtheorem{problem}[thmctr]{Problem}
\newtheorem*{definition}{Definition}
\theoremstyle{definition}
\newtheorem*{constr}{Construction}
\theoremstyle{plain}
\newcommand{\npmu}{$(n,p,\mu)$\xspace}
\newcommand{\dense}{$(p,\mu)$-dense\xspace}
\newcommand{\npmua}{$(n,p,\mu,\alpha)$\xspace}
\DeclareMathOperator{\inj}{\text{inj}}
\tikzstyle{vertex}=[circle,fill=black,inner sep=2pt]
\newcommand{\dhruvuni}{University of Illinois at Chicago \\ mubayi@math.uic.edu}
\newcommand{\johnuni}{University of Illinois at Chicago \\ lenz@math.uic.edu}
\newcommand{\dhruvfoot}{\footnote{Research supported in part by  NSF Grants 0969092 and 1300138.}}
\newcommand{\johnfoot}{\footnote{Research partly supported by NSA Grant H98230-13-1-0224.}}
\title{Perfect Packings in Quasirandom Hypergraphs I}
\author{John Lenz \johnfoot \\ \johnuni \and Dhruv Mubayi \dhruvfoot \\ \dhruvuni}
\begin{document}

\maketitle

\begin{abstract}
  Let $k \geq 2$ and $F$ be a linear $k$-uniform hypergraph with $v$ vertices.  We prove that if $n$
  is sufficiently large and $v|n$, then every quasirandom $k$-uniform hypergraph on $n$ vertices
  with constant edge density and minimum degree $\Omega(n^{k-1})$ admits a perfect $F$-packing.  The
  case $k = 2$ follows immediately from the blowup lemma of Koml\'os, S\'ark\"ozy, and Szemer\'edi.
  We also prove positive results for some nonlinear $F$ but at the same time give counterexamples
  for rather simple $F$ that are close to being linear.  Finally, we address the case when the
  density tends to zero, and prove (in analogy with the graph case) that sparse quasirandom
  3-uniform hypergraphs admit a perfect matching as long as their second largest eigenvalue is
  sufficiently smaller than the largest eigenvalue.
\end{abstract}

\section{Introduction} 
\label{sec:intro}

A $k$-uniform hypergraph $H$ ($k$-graph for short) is a collection of $k$-element subsets (edges) of
a vertex set $V(H)$. For a $k$-graph $H$ and a subset $S$ of vertices of size at most $k-1$, let
$d(S)=d_H(S)$ be the number of subsets of size $k - |S|$ that when added to $S$ form a edge of
$H$.  The \emph{minimum degree} of $H$, written $\delta(H)$, is the minimum of $d(\{s\})$ over all
vertices $s$.  The \emph{minimum $\ell$-degree} of $H$, written $\delta_\ell(H)$, is the minimum of
$d(S)$ taken over all $\ell$-sets of vertices.  The \emph{minimum codegree} of $H$ is the minimum
$(k-1)$-degree.  Let $K_t^k$ be the complete $k$-graph on $t$ vertices.

Let $G$ and $F$ be $k$-graphs.  We say that $G$ has a \emph{perfect $F$-packing} if the vertex set
of $G$ can be partitioned into copies of $F$.  An important result of Hajnal and
Szemer\'edi~\cite{pp-hajnal70} states that if $r$ divides $n$ and the minimum degree of an
$n$-vertex graph $G$ is at least $(1-1/r)n$, then $G$ has a perfect $K_r$-packing.  Later Alon and
Yuster~\cite{pp-alon96} conjectured that a similar result holds for any graph $F$ instead of just
cliques, with the minimum degree of $G$ depending on the chromatic number of $F$.  This was proved
by Koml\'os-S\'ark\"ozy-Szemer\'edi~\cite{pp-komlos01} by using the Regularity Lemma and Blow-up
Lemma.  Later, K\"uhn and Osthus~\cite{pp-kuhn09} found the minimum degree threshold for perfect
$F$-packings up to a constant; the threshold either comes from the chromatic number of $F$ or the
so-called critical chromatic number of $F$.

In the past decade there has been substantial interest in extending this result to $k$-graphs.
Nevertheless, the simplest case of determining the minimum codegree threshold that guarantees a
perfect matching was settled only recently by R\"odl-Ruci\'nski-Szemer\'edi~\cite{pp-rodl09}.  Since
then, there are a few results for codegree thresholds for packing other small
3-graphs~\cite{pp-czgrinow13, pp-keevash13, pp-kuhn06-cherry, pp-lo-kfour, pp-lo13, pp-pikhurko08,
pp-treglown09, pp-treglown13}.  For $\ell$-degrees with $\ell < k/2$ (in particular the minimum
degree), much less is known.  After work by many researchers~\cite{pp-han09, pp-kahn11, pp-kahn13,
pp-kuhn13, pp-kuhn13-fractional, pp-markstrom11}, still only the degree threshold for
$K_3^3$-packings and $K_4^3$-packings are known ($\frac{5}{9}$ and $\frac{37}{64}$ respectively).
For $m \geq 5$ and $k \geq 4$ the packing degree threshold for $K_m^k$ is open
(\cite{pp-kuhn13-fractional} contains the current best bounds).

A key ingredient in the proofs of most of the above results are specially designed random-like
properties of $k$-graphs that imply the existence of perfect $F$-packings.  There is a rather
well-defined notion of quasirandomness for graphs that originated in early work of
Thomason~\cite{qsi-thomason87,qsi-thomason87-2} and Chung-Graham-Wilson~\cite{qsi-chung89} which
naturally generalizes to $k$-graphs. Our main focus in this paper is on understanding when perfect
$F$-packings exist in quasirandom hypergraphs.  The basic property that defines quasirandomness is
uniform edge-distribution, and this extends naturally to hypergraphs.  Let $v(H) = |V(H)|$.

\begin{definition}
  Let $k \geq 2$, let $0<\mu,p<1$, and let $H$ be a $k$-graph.  We say that $H$ is $(p,\mu)$-dense
  if for all $X_1, \dots, X_k \subseteq H$,
  \begin{align*}
    e(X_1,\dots,X_k) \geq p |X_1| \cdots |X_k| - \mu n^k,
  \end{align*}
  where $e(X_1,\dots,X_k)$ is the number of $(x_1,\dots,x_k) \in X_1 \times \cdots \times X_k$ such
  that $\{x_1,\dots,x_k\} \in H$ (note that if the $X_i$s overlap an edge might be counted more than
  once).  Say that $H$ is an $(n,p,\mu)$ $k$-graph if $H$ has $n$ vertices and is $(p,\mu)$-dense.
  Finally, if $0 < \alpha < 1$, then an $(n,p,\mu)$ $k$-graph is an $(n,p,\mu,\alpha)$ $k$-graph if
  its minimum degree is at least $\alpha \binom{n}{k-1}$.
\end{definition}

The $F$-packing problem for quasirandom graphs with constant density has been solved implicitly by
Koml\'os-S\'ark\"ozy-Szemer\'edi~\cite{pp-komlos97} in the course of developing the Blow-up Lemma.

\begin{thm} \label{kss} {\bf (Koml\'os-S\'ark\"ozy-Szemer\'edi~\cite{pp-komlos97})}
  Let $0 < \alpha, p < 1$ be fixed and let $F$ be any graph.  There exists an $n_0$ and $\mu > 0$
  such that if $H$ is any $(n,p,\mu,\alpha)$ $2$-graph where $n \geq n_0$, $v(F) | n$ then $H$ has a
  perfect $F$-packing.
\end{thm}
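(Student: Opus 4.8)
The plan is to run the standard Regularity--Blow-up machinery that Koml\'os, S\'ark\"ozy and Szemer\'edi used in \cite{pp-komlos01} to prove the Alon--Yuster conjecture, observing that the quasirandom hypothesis makes the reduced graph essentially complete, so that the minimum-degree condition $\delta(H)\ge\alpha\binom{n}{k-1}$ is needed only in order to absorb the exceptional vertices. Write $v=v(F)$ and $\Delta=\Delta(F)$, let $\varepsilon_0=\varepsilon_0(p/4,\Delta)$ be the threshold supplied by the Blow-up Lemma of \cite{pp-komlos97}, choose $\varepsilon$ sufficiently small as a function of $\varepsilon_0$, $v$ and $\alpha$, let $M=M(\varepsilon)$ be the bound on the number of parts guaranteed by the Regularity Lemma, then pick $\mu<p/(2M^2)$ and finally $n_0$ large.

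First apply the Regularity Lemma to $H$: write $V(H)=V_0\cup V_1\cup\dots\cup V_m$ with $|V_0|\le\varepsilon n$ and $|V_1|=\dots=|V_m|=L$, and (by splitting parts if necessary) assume $v\mid m$. Let $R$ be the graph on $[m]$ in which $ij$ is an edge whenever $(V_i,V_j)$ is $\varepsilon$-regular of density at least $p/2$. If $(V_i,V_j)$ is $\varepsilon$-regular, then $(p,\mu)$-density gives $e(V_i,V_j)\ge pL^2-\mu n^2=(p-\mu m^2)L^2\ge\tfrac{p}{2}L^2$, so \emph{every} regular pair is an edge of $R$; hence $R$ omits only the at most $\varepsilon m^2$ irregular pairs, and $\delta(R)\ge(1-2\varepsilon)m>(1-1/v)m$. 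By the Hajnal--Szemer\'edi theorem \cite{pp-hajnal70}, $R$ has a perfect $K_v$-packing $\mathcal Q$, i.e.\ a partition of $[m]$ into $v$-sets each inducing a clique of $R$.

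I would next absorb $V_0$ greedily. Each $x\in V_0$ has at least $\alpha(n-1)$ neighbours in $H$ and hence at least $(\alpha/2)L$ neighbours in each of at least $(\alpha/2)m$ clusters; since $\varepsilon$ is small relative to $\alpha/v$, these ``good'' clusters induce a subgraph of $R$ of minimum degree exceeding $(1-1/v)$ times its order, so they contain a $v$-clique $Q_x$. Using $x$ as one vertex of $F$ and the clusters of $Q_x$ as the homes of the remaining $v-1$ vertices --- placing the $F$-neighbours of $x$ into clusters in which $x$ has many neighbours --- the standard greedy embedding into $\varepsilon$-regular pairs produces a copy of $F$ through $x$, provided no cluster has yet lost more than, say, $\sqrt\varepsilon L$ vertices. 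Since $|V_0|\le\varepsilon n$ and we may always draw the $v-1$ new vertices from the currently largest clusters, this can be carried out for every $x\in V_0$ using at most $\varepsilon n$ copies of $F$ and shrinking each cluster by at most a $o(1)$ fraction. Then, for each group $Q\in\mathcal Q$, delete a few more copies of $F$ (again inside the regular pairs of the clique $Q$) so that the $v$ clusters of $Q$ end up with a common size $t_Q$, and delete the at most $(v-1)\varepsilon L$ low within-pair-degree vertices from each, so that every pair inside $Q$ becomes $(\varepsilon',p/4)$-super-regular with $\varepsilon'\le\varepsilon_0$.

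Finally, fix $Q=\{W_1,\dots,W_v\}\in\mathcal Q$ with $|W_1|=\dots=|W_v|=t_Q$, label $V(F)=\{u_1,\dots,u_v\}$, and let $G_Q$ be the disjoint union of $t_Q$ copies of $F$ in which the copy of $u_i$ lies in the class $W_i$. Then $G_Q$ spans $W_1\cup\dots\cup W_v$, has maximum degree $\Delta$, and respects the cluster partition, so the Blow-up Lemma embeds $G_Q$ into $H[W_1\cup\dots\cup W_v]$ --- a perfect $F$-packing of these clusters. The union over $Q\in\mathcal Q$ together with the copies of $F$ used to cover $V_0$ and to balance is a perfect $F$-packing of $H$. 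The only genuinely delicate step is the third paragraph: one must cover the $\le\varepsilon n$ vertices of $V_0$ by copies of $F$ that respect the clique structure of $R$ while keeping every cluster large and every relevant pair super-regular; the rest is a direct invocation of the Regularity Lemma, the Hajnal--Szemer\'edi theorem, and the Blow-up Lemma.
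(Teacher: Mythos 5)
First, a point of comparison: the paper does not prove Theorem~\ref{kss} at all --- it is quoted from \cite{pp-komlos97}. The paper's own route to this statement is as the $k=2$ case of Theorem~\ref{thm:linearpacking} (every graph is a linear $2$-graph), i.e.\ the absorbing method: Lemma~\ref{lem:linear-absorb} shows a \dense graph with minimum degree $\alpha n$ is $(f^2-f,f,\epsilon,F)$-rich, and Proposition~\ref{prop:richimpliespacking} converts richness into a perfect packing. That route avoids the Regularity Lemma entirely, which is exactly why the paper claims an improvement in $n_0$ (polynomial rather than tower-type in $\mu^{-1}$). Your proposal is the classical Regularity--Hajnal-Szemer\'edi--Blow-up argument, so it is a genuinely different (and historically the original) approach; it is viable in outline, but it pays the tower-type constant and, as written, has a real gap.

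The gap is in your third paragraph, precisely the step you flag as delicate. You claim that within a single group $Q$ of the $K_v$-factor of $R$ one can delete copies of $F$ ``inside the regular pairs of the clique $Q$'' to bring its $v$ clusters to a common size $t_Q$. This is impossible in general. Every copy of $F$ embedded inside $Q$ removes exactly $v$ vertices from $Q$, so the total size of $Q$ modulo $v$ is invariant under such deletions; after covering $V_0$ the group totals will typically not be divisible by $v$, so no common $t_Q$ exists. Worse, if $F=K_v$ then every copy inside $Q$ is a transversal (one vertex per cluster, since adjacency is only controlled between clusters), so even the pairwise differences of cluster sizes within $Q$ are invariant and cannot be repaired at all from inside $Q$. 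The standard fix --- which your write-up omits --- is to use a few copies of $F$ that straddle two or more groups (available here because $R$ is nearly complete) to shuttle single vertices between groups and restore both divisibility and balance before invoking the Blow-up Lemma. Two smaller issues in the same paragraph: $\delta(R)\ge(1-2\varepsilon)m$ does not follow from ``at most $\varepsilon m^2$ irregular pairs'' without first discarding clusters lying in many irregular pairs; and the low-degree vertices you delete to achieve super-regularity must themselves be covered by copies of $F$, so that step has to precede (and feed into) the exceptional-set covering and the balancing, not follow them.
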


Note that the condition on minimum degree is required, since if the condition ``$\delta(H) \geq
\alpha n$'' in Theorem~\ref{kss} is replaced by ``$\delta(H) \geq f(n)$'' for any choice of $f(n)$
with $f(n) = o(n)$, then there exists the following counterexample. Take the disjoint union of the
random graph $G(n,p)$ and a clique of size either $\left\lceil f(n) \right\rceil + 1$ or
$\left\lceil f(n)\right\rceil+2$ depending on which is odd.  The minimum degree is at least $f(n)$,
there is no perfect matching, and the graph is still $(p,\mu)$-dense.  Because of the use of the
regularity lemma, the constant $n_0$ in Theorem~\ref{kss} is an exponential tower in $\mu^{-1}$.  We
extend Theorem~\ref{kss} to a variety of $k$-graphs.  In the process, we also reduce the size of
$n_0$ for all $2$-graphs. A basic problem in this area that naturally emerges is the following.

\begin{problem} \label{prob}
  For which $k$-graphs $F$ does the following hold: for all $0 < p,\alpha < 1$, there is some $n_0$
  and $\mu$ so that if $H$ is an $(n,p,\mu,\alpha)$ $k$-graph with $n \geq n_0$ and $v(F)|n$, then
  $H$ has a perfect $F$-packing.
\end{problem}

Unlike the graph case, most $F$ will not satisfy Problem~\ref{prob}.  Indeed, R\"odl observed that
for all $\mu > 0$ and there is an $n_0$ such that for $n \geq n_0$, an old construction of Erd\H os and
Hajnal~\cite{ram-erdos72} produces an $n$-vertex 3-graph which is $(\frac{1}{4},\mu)$-dense and has
no copy of $K_4^3$.  In a forthcoming paper we will show that a stronger notion of quasirandomness
suffices to perfectly pack all $F$.

A hypergraph is \emph{linear} if every two edges share at most one vertex.  For a $k$-graph $H$,
Kohayakawa-Nagle-R\"odl-Schacht~\cite{hqsi-kohayakawa10} recently proved an equivalence between
$(|H|/\binom{n}{k},\mu)$-dense and the fact that for each linear $k$-graph $F$, the number of
labeled copies of $F$ in $H$ is the same as in the random graph with the same density.  This leads
naturally to the question of whether Problem~\ref{prob} has a positive answer for linear $k$-graphs,
and our first result shows that this is the case.

\begin{thm} \label{thm:linearpacking}
  Let $k \ge 2$, $0 < \alpha, p < 1$, and let $F$ be a linear $k$-graph.  There exists an $n_0$ and
  $\mu > 0$ such that if $H$ is an $(n,p,\mu,\alpha)$ $k$-graph where $n \geq n_0$ and $v(F)|n$, then
  $H$ has a perfect $F$-packing.
\end{thm}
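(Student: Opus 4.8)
The plan is to use the absorbing method of R\"odl--Ruci\'nski--Szemer\'edi (as in \cite{pp-rodl09}), with the supersaturation input coming from a counting lemma for linear $k$-graphs in $(p,\mu)$-dense hypergraphs. Throughout we may assume $F$ has at least one edge (otherwise every balanced partition works) and write $v=v(F)$, $e=e(F)$. \emph{Step 1: a counting lemma for linear $k$-graphs.} First I would show that in an $(n,p,\mu)$ $k$-graph $H$, every linear $k$-graph $F'$ with $v'$ vertices and $e'$ edges occurs $(p^{e'}\pm o_\mu(1))n^{v'}$ times as a labelled sub-$k$-graph; this is essentially one direction of the Kohayakawa--Nagle--R\"odl--Schacht equivalence \cite{hqsi-kohayakawa10}. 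From the global count I would extract a \emph{rooted} version: if $R\subseteq F'$ is a fixed linear sub-$k$-graph, then for all but $o(n^{v(R)})$ embeddings of $R$ into $V(H)$ the number of extensions to a labelled copy of $F'$ is $(p^{e'-e(R)}\pm o_\mu(1))n^{v'-v(R)}$. The key structural point that keeps this going is that in a \emph{linear} $F$ the edges through any vertex meet pairwise only in that vertex, so every relevant link is a disjoint union of $(k-1)$-sets and every intermediate $k$-graph one encounters stays linear; the density hypothesis, which only controls $e(X_1,\dots,X_k)$ when all $X_i$ are of size $\Theta(n)$, suffices to control these bulk counts. In particular all but $o(n)$ vertices of $H$ lie in $\Theta(n^{v-1})$ copies of $F$ and all but $o(n^2)$ pairs lie in $O(n^{v-2})$ copies.

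\emph{Step 2: using the minimum degree to make every vertex good.} Next I would upgrade ``almost every vertex'' to ``every vertex'': if $\delta(H)\ge\alpha\binom{n}{k-1}$ then every $x\in V(H)$ lies in $\Omega(n^{v-1})$ copies of $F$. Indeed $x$ lies in $\Omega(n^{k-1})$ edges, and only $O(n^{k-2})$ of them meet any fixed $k$-set, so one can greedily realise a link configuration at $x$ matching the edges of $F$ through a chosen vertex $u\in V(F)$ and then extend to a full copy using the rooted counting lemma (after checking that almost all configurations produced this way are of the typical kind counted in Step~1). Pushing this a little further, for \emph{every} $v$-set $S=\{x_1,\dots,x_v\}\subseteq V(H)$ there are $\Omega(n^{t})$ \emph{absorbers}: sets $T$ of a fixed size $t=cv$ disjoint from $S$ such that both $H[T]$ and $H[S\cup T]$ have perfect $F$-packings. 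One natural construction takes $T$ to be $v$ disjoint copies $T_1,\dots,T_v$ of $F$ arranged so that some vertex $y_i\in T_i$ may be exchanged for $x_i$ (each $T_i\setminus\{y_i\}\cup\{x_i\}$ is again a copy of $F$) while $\{y_1,\dots,y_v\}$ itself spans a copy of $F$; the number of such $T$ is $\Omega(n^t)$ by Step~1 and the ``every vertex is good'' statement just proved.

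\emph{Step 3: assembling the packing.} Now I would run the standard absorbing machinery. A random subfamily $\mathcal A$ of absorbers, of size $\Theta(n/\mathrm{polylog})$, is with positive probability vertex-disjoint with $|V(\mathcal A)|=o(n)$ and has the property that $V(\mathcal A)\cup W$ has a perfect $F$-packing for every $W\subseteq V(H)\setminus V(\mathcal A)$ with $|W|\le\gamma n$ and $v\mid|W|$; this is a routine first/second moment argument using that every $v$-set has $\Omega(n^t)$ absorbers. Fix such an $\mathcal A$. Since $H-V(\mathcal A)$ is still $(p,O(\mu/\gamma^k))$-dense on $(1-o(1))n$ vertices, Step~1 applies inside it, so the auxiliary hypergraph whose edges are the copies of $F$ is nearly regular of degree $\Theta(n^{v-1})$ with codegrees $O(n^{v-2})$, and the R\"odl nibble (or any standard near-perfect packing argument) yields an $F$-packing of $H-V(\mathcal A)$ covering all but at most $\gamma n$ vertices. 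The uncovered set $W$ satisfies $v\mid|W|$ because $v\mid n$ and $v$ divides the sizes of $V(\mathcal A)$ and of the nibble packing; so the absorbing property of $\mathcal A$ finishes $V(\mathcal A)\cup W$ into copies of $F$, and together with the nibble packing this is a perfect $F$-packing of $H$. The parameters are chosen in the order $\alpha,p,F$ fixed, then $\gamma$, then $\mu$ small enough, then $n_0$ large.

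The main obstacle is Step~1 together with its ``every vertex'' strengthening in Step~2: because the $(p,\mu)$-dense hypothesis says nothing useful unless all the sets $X_i$ have size linear in $n$, one cannot embed $F$ one vertex at a time, and the rooted counting lemma has to be proved through bulk quantities; moreover the leftover set $W$ is adversarial, so one genuinely needs the minimum-degree bound (not just almost-every-vertex information) to guarantee absorbers for every $v$-set. Once these counting statements are in hand, the random choice of absorbers, the nibble, and the divisibility bookkeeping are all standard.
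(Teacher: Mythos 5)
Your overall architecture (linear counting lemma, absorbers for every $v(F)$-set, almost-perfect cover, absorption) is the same as the paper's, but your Step 2 absorber gadget has a genuine gap. The ``exchange'' absorber requires, for each $i$, a copy $T_i$ of $F$ through $y_i$ such that every edge $e\ni y_i$ of $T_i$ has its shadow $e\setminus\{y_i\}\cup\{x_i\}$ also present in $H$. For $k\geq 3$ these two edges share $k-1\geq 2$ vertices, so the auxiliary hypergraph you need to count is \emph{not} linear, and neither your Step 1 counting lemma nor the minimum-degree hypothesis controls its count. This is not a technicality: the $(p,\mu)$-dense condition in this paper is a one-sided lower bound on $e(X_1,\dots,X_k)$ and only governs linear configurations, and the construction in Section~\ref{sec:constr} exhibits a $(\frac18,\mu)$-dense $3$-graph with minimum degree $\approx\frac18\binom{n}{2}$ in which a vertex $x\in X$ and a vertex $y\in Y$ have \emph{empty} common link — no pair $\{u,v\}$ has both $xuv$ and $yuv$ as edges. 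So ``$y_i$ is exchangeable for $x_i$'' is a codegree/cherry-type property that quasirandomness of this strength simply does not supply for arbitrary pairs, and your claim that Step 1 plus ``every vertex is good'' yields $\Omega(n^t)$ such gadgets is unjustified. The paper avoids this by using a different gadget: an $f\times f$ grid $F'$ in which each column and each nonzero row induces a copy of $F$, with $B$ placed as the zeroth row; rows pack $A$ and columns pack $A\cup B$. Crucially $F'$ is itself a \emph{linear} $k$-graph whose only roots are the zeroth row, so it can be counted by the rooted embedding lemma (Lemma~\ref{lem:manyatvertex}) using density for the unrooted edges and the minimum vertex degree for the edges through the roots.

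Two smaller issues in the same vein. First, your Step 1 asserts a two-sided count $(p^{e'}\pm o(1))n^{v'}$ and codegree upper bounds $O(n^{v-2})$; since the density hypothesis is only a lower bound, no upper bounds on subgraph counts are available, so the R\"odl nibble in Step 3 (which needs near-regularity and small codegrees of the auxiliary $F$-hypergraph) is not justified. It is also unnecessary: a greedy argument suffices, because the lower-bound counting lemma shows every vertex set of size $\geq\omega n$ contains a copy of $F$, so greedily removing copies leaves at most $\omega n$ vertices, which is all the absorption step needs. Second, the divisibility bookkeeping and the random selection of a disjoint absorbing family are fine and match the paper's Lemmas~\ref{lem:absorbing} and~\ref{lem:greedypacking}.
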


We restrict our attention only to 3-graphs now although the concepts extend naturally to larger $k$.
Define a $3$-graph to be \emph{$(2+1)$-linear} if its edges can be ordered as $e_1, \ldots, e_q$
such that each $e_i$ has a partition $s_i \cup t_i$ with $|s_i|=2, |t_i|=1$ and for every $j<i$ we
have $e_j \cap e_i \subseteq s_i$ or $e_j \cap e_i \subseteq t_i$.  In words, every edge before
$e_i$ intersects $e_i$ in a subset of $s_i$ or of $t_i$.  Clearly every linear 3-graph is
$(2+1)$-linear, but the converse is false.  Keevash's~\cite{design-keevash14} recent proof of the
existence of designs and our recent work on quasirandom properties of
hypergraphs~\cite{hqsi-lenz-quasi12, hqsi-lenz-quasi12-nonregular, hqsi-lenz-poset12} use a
quasirandom property distinct from $(p,\mu)$-dense that Keevash calls \emph{typical} and we call
\emph{$(2+1)$-quasirandom} (although the properties are essentially equivalent).  These properties
imply that the count of all $(2+1)$-linear $3$-graphs in a typical $3$-graph is the same as in the
random $3$-graph (see~\cite{hqsi-lenz-quasi12,hqsi-lenz-quasi12-nonregular}).
 
Thus a natural direction in which to extend Theorem~\ref{thm:linearpacking} is to the family of
$(2+1)$-linear $3$-graphs and we begin this investigation with some of the smallest such $3$-graphs.
A \emph{cherry} is the 3-graph comprising two edges that share precisely two vertices - this is the
``simplest" non-linear hypergraph. A more complicated $(2+1)$-linear 3-graph is $C_4(2+1)$ which has
vertex set $\{1,2,3,4,a,b\}$ and edge set $\{12a, 12b, 34a, 34b\}$.  The importance of $C_4(2+1)$
lies in the fact that $C_4(2+1)$ is forcing for the class of all $(2+1)$-linear $3$-graphs.  This
means that if $F$ is a $(2+1)$-linear $3$-graph and $p,\epsilon > 0$ are fixed, there is $n_0$ and
$\delta > 0$ so that if $n \geq n_0$ and $H$ is an $n$-vertex $3$-graph with $p\binom{n}{3}$ edges
and $(1\pm\delta)p^4n^6$ labeled copies of $C_4(2+1)$, then the number
of labeled copies of $F$ in $H$ is $(1\pm\epsilon)p^{|F|}n^{v(F)}$ (see~\cite{hqsi-lenz-quasi12,
hqsi-lenz-quasi12-nonregular}).
 
\begin{figure}[h] 
  \center
  \subcaptionbox*{Cherry}{%
  \begin{tikzpicture}
    \node (x) at (0,0) [vertex] {};
    \node (y) at (1,0) [vertex] {};
    \node (z1) at (2,0.5) [vertex] {};
    \node (z2) at (2,-0.5) [vertex] {};

    \draw (x) ..controls (y) .. (z1);
    \draw (x) ..controls (y) .. (z2);
  \end{tikzpicture}
  }
  \hspace{1cm}
  \subcaptionbox*{$C_4(2+1)$}{%
  \hspace{0.7cm}
  \begin{tikzpicture}
    \node (x1) at (-0.1,0.1) [vertex] {};
    \node (x2) at (0.1,-0.1) [vertex] {};
    \node (y) at (1, 0) [vertex] {};
    \node (z1) at (0.9,1.1) [vertex] {};
    \node (z2) at (1.1,0.9) [vertex] {};
    \node (w) at (0,1) [vertex] {};

    \draw[rounded corners=5pt] (0.5,-0.25) -- (-0.4,-0.4) -- (-0.4,0.4) -- (1.4,0.14) --
      (1.4,-0.14) -- (0.5,-0.25);

    \begin{scope}[yshift=1cm, rotate around={180:(0.5,0)}]
      \draw[rounded corners=5pt] (0.5,-0.25) -- (-0.4,-0.4) -- (-0.4,0.4) -- (1.4,0.14) --
        (1.4,-0.14) -- (0.5,-0.25);
    \end{scope}

    \begin{scope}[rotate around={90:(0,0)}]
      \draw[rounded corners=5pt] (0.5,-0.25) -- (-0.4,-0.4) -- (-0.4,0.4) -- (1.4,0.14) --
      (1.4,-0.14) -- (0.5,-0.25);
    \end{scope}

    \begin{scope}[rotate around={-90:(1,0)}]
      \draw[rounded corners=5pt] (0.5,-0.25) -- (-0.4,-0.4) -- (-0.4,0.4) -- (1.4,0.14) --
      (1.4,-0.14) -- (0.5,-0.25);
    \end{scope}
  \end{tikzpicture}
  \hspace{0.7cm}
  }
  \caption{Two $3$-graphs}
\end{figure}
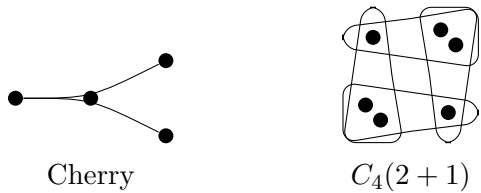 
 	
\begin{thm} \label{thm:three-unif-packings}
  Let $0 < \alpha, p < 1$.  There exists an $n_0$ and $\mu > 0$ such that if $H$ is an
  $(n,p,\mu,\alpha)$ $3$-graph where $n \geq n_0$, then $H$ has a perfect cherry-packing if
  $4|n$ and a perfect $C_4(2+1)$ packing if $6|n$.
\end{thm}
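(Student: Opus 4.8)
The plan is to use the absorbing method. I would produce the perfect $F$-packing (for $F$ the cherry, resp.\ $C_4(2+1)$) by setting aside a small \emph{absorbing} structure $A\subseteq V(H)$, then covering all but a few of the remaining vertices by vertex-disjoint copies of $F$, and finally absorbing the tiny leftover into $A$. So there are two ingredients: an \emph{almost-perfect packing lemma} relying only on $(p,\mu)$-density, and an \emph{absorbing lemma} where the minimum-degree hypothesis is used. The overall framework runs parallel to the one for Theorem~\ref{thm:linearpacking}; the new point is that the cherry and $C_4(2+1)$ are non-linear, so the relevant copy-counting and gadget constructions must be redone.

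For the almost-perfect packing, the first step is to show that if $\mu$ is small enough relative to $p$ and $\beta$, then every $U\subseteq V(H)$ with $|U|\ge\beta n$ spans a copy of $F$. Indeed $(p,\mu)$-density forces $e(H[U])\ge\frac16\bigl(p|U|^3-\mu n^3\bigr)=\Omega(n^3)$; summing codegrees over the $\binom{|U|}{2}$ pairs inside $U$ then yields a pair $P\subseteq U$ with at least two vertices $u\in U$ having $P\cup\{u\}\in H$, i.e.\ a cherry. For $C_4(2+1)$ one pushes this one step further: a Cauchy--Schwarz estimate on $\sum_P d_{H[U]}(P)^2$ produces two vertices $u,u'\in U$ lying in the codegree-neighbourhood of $\Omega(n^2)$ pairs $P\subseteq U$, and among $\Omega(n^2)$ pairs on at most $n$ vertices two are disjoint, giving a copy of $C_4(2+1)$. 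Then greedily removing copies of $F$ reduces $H$ to a leftover of fewer than $\beta n$ vertices; since $v(F)\mid n$ and we will arrange $v(F)\mid|A|$, the leftover is automatically divisible by $v(F)$.

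The absorbing lemma is the crux. As usual it suffices to show that for every $v(F)$-set $W\subseteq V(H)$ there are $\Omega(n^{c})$ \emph{absorbers} for $W$ --- constant-size vertex sets $T$, disjoint from $W$, such that both $H[T]$ and $H[T\cup W]$ have perfect $F$-packings --- after which a standard probabilistic selection builds a single $F$-packable set $A$ of size $o(n)$ that absorbs any leftover $W$ with $|W|$ small and $v(F)\mid|W|$; running Step~2 down to that threshold then finishes the proof. A basic absorber gadget for a cherry $F$ and a $4$-set $W=\{w_1,\dots,w_4\}$ is a union of four cherries $C_1,\dots,C_4\subseteq T$, where $C_i$ has centre pair $Q_i$ with $Q_i\cup\{w_i\}\in H$, subject to the requirement that one designated leaf from each of $C_1,\dots,C_4$ forms a fifth cherry: swapping each $w_i$ into $C_i$ frees exactly those four leaves, which reassemble into the fifth cherry, so $H[T\cup W]$ packs into five cherries while $H[T]$ packs into four. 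Counting such gadgets for a fixed $W$ reduces to showing that $\sum_{Q:\,Q\cup\{w_i\}\in H}d_H(Q)$ is large (of order $n^3$) for each $i$, i.e.\ that the link graph of $w_i$ contains many pairs of large codegree; an analogous but bulkier gadget works for $C_4(2+1)$ and reduces to an estimate of order $n^5$.

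The main obstacle is exactly this counting. The $(p,\mu)$-density hypothesis is vacuous on bounded sets --- taking some $X_i$ of size $O(1)$ makes the term $-\mu n^k$ swamp $p|X_1|\cdots|X_k|$ --- so it does not by itself rule out a vertex $w$ whose entire link graph consists of pairs of tiny codegree, and (as the counterexample after Theorem~\ref{kss} already signals) the minimum-degree hypothesis $\delta(H)\ge\alpha\binom{n}{k-1}$ is genuinely needed here. To handle such low-codegree vertices I would work directly with the link graph of $w_i$, which still has $\Omega(n^2)$ edges: $w_i$ can then be forced into the centre pair of $\Omega(n^3)$ cherries, but the vertices it displaces can no longer be drawn from a single copy of $F$, so the gadget must be enlarged so that the displaced vertices are spread over several copies of $F$ and reassembled using the global $(p,\mu)$-density (where the relevant sets now do have size $\Omega(n)$). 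Making this accounting uniform over all $W$ --- in particular when $W$ lies entirely inside such a sparse region, and for the more involved $C_4(2+1)$ gadget --- is where I expect the real work to be. One possible simplification is to first set aside a small $F$-packing covering all low-codegree vertices, using that every vertex lies in $\Omega(n^{v(F)-1})$ copies of $F$ (a consequence of the minimum-degree bound); this would leave an absorbing problem only for leftovers of ``typical'' vertices, for which the basic gadget above already suffices.
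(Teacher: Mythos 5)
Your overall architecture (absorber set plus greedy almost-perfect packing plus final absorption, with the divisibility bookkeeping you describe) matches the paper's, and your almost-perfect packing step is essentially the paper's Lemma~\ref{lem:greedypacking}. The genuine gap is in the absorbing step, and it sits exactly where you suspect. Your cherry gadget requires, for each vertex $w_i$ of the target $4$-set, $\Omega(n^2)$ pairs $Q_i$ in the link of $w_i$ with $d_H(Q_i)=\Omega(n)$, i.e.\ $\Omega(n^3)$ cherries having $w_i$ as a \emph{leaf}. This does not follow from $(p,\mu)$-density together with minimum degree $\alpha\binom{n}{2}$. For instance, start from a random $(p,\mu)$-dense $3$-graph $H_0$, fix a vertex $w$, let $M$ be a quasirandom graph of density $\tfrac{1}{10}$ on $V\setminus\{w\}$, delete every edge of $H_0$ that contains $w$ or contains a pair of $M$, and add all triples $Q\cup\{w\}$ with $Q\in M$. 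The result is still $(p',\mu')$-dense with minimum degree $\Omega(n^2)$, yet every pair in the link of $w$ has codegree exactly one, so $w$ is a leaf of \emph{no} cherry. This also breaks your proposed simplification: the assertion that every vertex lies in $\Omega(n^{v(F)-1})$ copies of $F$ ``as a consequence of the minimum-degree bound'' is not available in the form you need it (here $w$ lies in cherries only as a centre vertex), and your enlarged-gadget fallback for such vertices --- which must also handle a leftover $4$-set consisting \emph{entirely} of such vertices --- is precisely the part of the argument you have not supplied. It is not a routine patch.

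The paper resolves this by a structurally different division of labour: it never absorbs arbitrary $v(F)$-sets. It absorbs only \emph{$\zeta$-separable} sets, i.e.\ sets partitioning into pairs each of codegree at least $\zeta n$ with $\zeta=\min\{\tfrac{p}{4},\tfrac{\alpha}{4}\}$. For such a $4$-set $\{b_1,b_2,b_3,b_4\}$ the absorber count is easy: $N(b_1,b_2)$ and $N(b_3,b_4)$ are linear-sized sets, where $(p,\mu)$-density and supersaturation (Lemma~\ref{lem:supersat}) yield $\Omega(n^4)$ cherries with centre pair in the first and both leaves in the second (Lemma~\ref{lem:cherry-absorb}); note the roles are reversed relative to your gadget, since the $b_i$ end up as centre pairs, which is the configuration the codegree hypothesis actually supports. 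The price, paid in Lemma~\ref{lem:seppacking}, is showing that the almost-perfect packing can be arranged so that its leftover is $\zeta$-separable; this is where the minimum degree is genuinely used, via the fact that the auxiliary graph of high-codegree pairs has minimum degree $\Omega(n)$, followed by a random sparsification and a Hall's-theorem argument (Proposition~\ref{prop:richseparableimpliespacking}). If you want to complete your proof, adopting this ``separable leftover'' idea is much cleaner than enlarging the gadgets to handle pathological vertices.
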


One might speculate that Theorem~\ref{thm:three-unif-packings} can be extended to the collection of
all $(2+1)$-linear $F$ or to the collection of all $3$-partite $F$.  However, our next result shows
that this is not the case and that solving Problem~\ref{prob} will be a difficult project.  If $x$
is a vertex in a $3$-graph $H$, the \emph{link} of $x$ is the graph with vertex set $V(H) \setminus
\{x\}$ and edges those pairs who form an edge with $x$.

\begin{thm} \label{thm:nopacking}
  Let $F$ be any $3$-graph with an even number of vertices such that there exists a
  partition of the vertices of $F$ into pairs such that each pair has a common edge in their links.
  Then for any $\mu > 0$, there exists an $n_0$ such that for all $n \geq n_0$, there exists a
  $3$-graph $H$ such that
  \begin{itemize}
    \item $|H| = \frac{1}{8} \binom{n}{3} \pm \mu n^3$,
    \item $H$ is $(\frac{1}{8},\mu)$-dense,
    \item $\delta(H) \geq (\frac{1}{8} - \mu) \binom{n}{2}$,
    \item $H$ has no perfect $F$-packing.
  \end{itemize}
\end{thm}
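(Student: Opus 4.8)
The plan is to exhibit a single random $3$-graph $H$ that is quasirandom for an essentially trivial reason but whose cherries obey a rigid rule: the two degree-$1$ vertices (``tips'') of every cherry in $H$ lie on the same side of a fixed bipartition of $V(H)$ whose parts have odd and even size. Since the hypothesis on $F$ says exactly that $V(F)$ can be tiled by tip-pairs of cherries inside $F$, this rule forces every copy of $F$ in $H$ to meet one part evenly, which is incompatible with a perfect packing. Concretely: let $V=[n]$, fix any $W\subseteq V$ with $|W|$ odd, let $\chi\colon\binom{V}{2}\to\{0,1\}$ be uniformly random with the bits $\chi(e)$ mutually independent, write $\mathbf{1}_W$ for the indicator of $W$, and let $H$ be the $3$-graph on $V$ whose edges are precisely the triples $\{x,y,z\}$ with
\[
  \chi(\{x,y\})=\mathbf{1}_W(z),\qquad \chi(\{x,z\})=\mathbf{1}_W(y),\qquad \chi(\{y,z\})=\mathbf{1}_W(x).
\]
I claim that with high probability this $H$ has all four desired properties.

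\emph{No perfect $F$-packing (deterministic).} If $\{x,y,z\}$ and $\{x,y,w\}$ are edges of $H$ forming a cherry with tips $z,w$, then $\chi(\{x,y\})$ equals both $\mathbf{1}_W(z)$ and $\mathbf{1}_W(w)$, so $z\in W\iff w\in W$; thus the tips of every cherry in $H$ lie on the same side of $(W,V\setminus W)$. Let $\{u_1,v_1\},\dots,\{u_m,v_m\}$ be the partition of $V(F)$ guaranteed by the hypothesis, and for each $i$ pick $a_i,b_i$ so that $\{a_i,b_i\}$ is a common edge of the links of $u_i$ and $v_i$, i.e.\ $\{u_i,a_i,b_i\},\{v_i,a_i,b_i\}\in F$; then $u_i,v_i,a_i,b_i$ are four distinct vertices forming a cherry in $F$ with tips $u_i,v_i$. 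If $\phi$ embeds $F$ into $H$, then $\{\phi(u_i),\phi(a_i),\phi(b_i)\}$ and $\{\phi(v_i),\phi(a_i),\phi(b_i)\}$ are edges of $H$ forming a cherry with tips $\phi(u_i),\phi(v_i)$, so $|\{\phi(u_i),\phi(v_i)\}\cap W|$ is even for each $i$. Summing over $i$, and using that the pairs $\{u_i,v_i\}$ partition $V(F)$, gives that $|\phi(V(F))\cap W|$ is even. Hence every copy of $F$ in $H$ contains an even number of vertices of $W$, so a perfect $F$-packing of $H$ would express $|W|$ as a sum of even numbers, contradicting $|W|$ odd. (When $v(F)\nmid n$ there is of course no perfect packing in any case.)

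\emph{Quasirandomness, density and minimum degree (whp).} For distinct $x,y,z$ the three displayed equalities involve $\chi$ on the three distinct pairs $\{x,y\},\{x,z\},\{y,z\}$, so they are independent fair coin flips and $\Pr[\{x,y,z\}\in H]=\tfrac18$. Hence $\mathbb{E}|H|=\tfrac18\binom{n}{3}$, $\mathbb{E}\,d_H(v)=\tfrac18\binom{n-1}{2}$ for every $v$, and for all $X_1,X_2,X_3\subseteq V$,
\[
  \mathbb{E}\,e(X_1,X_2,X_3)=\tfrac18\cdot\#\{\text{triples of distinct vertices in }X_1\times X_2\times X_3\}\ \ge\ \tfrac18|X_1||X_2||X_3|-n^2 .
\]
Each of $|H|$ and $e(X_1,X_2,X_3)$ is a function of the $\binom{n}{2}$ independent bits $\chi(e)$, and flipping one bit $\chi(\{a,b\})$ alters the membership of only the $\le n-2$ triples containing both $a$ and $b$, hence changes $|H|$ by $\le n$ and changes $e(X_1,X_2,X_3)$ by $\le 6n$; McDiarmid's bounded-differences inequality then gives deviation above $\tfrac\mu2 n^3$ with probability $2\exp(-\Omega(\mu^2 n^2))$, and a union bound over the $8^n$ boxes shows that whp $|H|=\tfrac18\binom{n}{3}\pm\mu n^3$ and $e(X_1,X_2,X_3)\ge\tfrac18|X_1||X_2||X_3|-\mu n^3$ for all $X_1,X_2,X_3$, i.e.\ $H$ is $(\tfrac18,\mu)$-dense. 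For the minimum degree a single application of the bounded-differences inequality is too lossy at the scale $\binom{n}{2}$, so one first exposes the $n-1$ bits $\chi(\{v,w\})$, $w\ne v$: these determine the set $P_v=\{\{y,z\}:\chi(\{v,y\})=\mathbf{1}_W(z),\ \chi(\{v,z\})=\mathbf{1}_W(y)\}$, and conditionally $d_H(v)$ is the $\mathrm{Binomial}(|P_v|,\tfrac12)$ count of pairs $\{y,z\}\in P_v$ with $\chi(\{y,z\})=\mathbf{1}_W(v)$; since bounded differences over these $n-1$ bits give $|P_v|=\tfrac14\binom{n-1}{2}\pm o(n^2)$ whp and Chernoff then controls the Binomial, we get $d_H(v)=\tfrac18\binom{n-1}{2}\pm o(n^2)\ge(\tfrac18-\mu)\binom{n}{2}$ for all $v$ whp. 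Selecting an $H$ lying in the resulting good event completes the proof.

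\emph{Main obstacle.} The content is the construction itself --- the observation that the weak notion $(p,\mu)$-dense imposes no constraint at all on cherries, so the ``same side of $W$'' rule can be hard-wired into every cherry of $H$ while preserving quasirandomness, thereby defeating perfect $F$-packings for \emph{all} $F$ meeting the hypothesis at once. After that, the only genuine care needed is in the minimum-degree estimate, where the admissible window $\mu\binom{n}{2}$ is too narrow for a one-shot bounded-differences bound and the two-stage exposure above is required; the density and edge-count estimates, by contrast, sit comfortably within a single use of McDiarmid together with a union bound over the $2^{O(n)}$ boxes.
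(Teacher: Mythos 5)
Your proposal is correct, and it rests on the same key idea as the paper's proof: a $3$-graph built from $\binom{n}{2}$ independent fair bits via a local parity rule tied to a fixed bipartition with an odd part, arranged so that two vertices on opposite sides of the bipartition never share a pair in their links (equivalently, cherry tips lie on one side); the hypothesis on $F$ then forces every copy of $F$ to meet the odd part in an even number of vertices, ruling out a perfect packing. The differences are in execution rather than strategy. The paper's edge rule is different in detail (a triple $E$ is an edge iff $G(n,\tfrac12)$ restricted to $E$ is a clique or an independent set according to the parity of $|E\cap X|$, versus your rule $\chi(e)=\mathbf{1}_W(z)$ for each pair $e$ opposite a vertex $z$ of $E$), and the concentration is handled differently: the paper partitions $\binom{[n]}{3}$ into Steiner triple systems so that the edge indicators within each class are mutually independent and Chernoff applies classwise, whereas you use McDiarmid's bounded-differences inequality together with a two-stage exposure for the degrees. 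Both routes are sound; your degree argument is actually more explicit than the paper's, which does not spell out the minimum-degree verification, and your identification of the one-shot bounded-differences bound as too lossy there is accurate.
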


Two examples of $3$-graphs $F$ that satisfy the conditions of Theorem~\ref{thm:nopacking} are the
complete 3-partite 3-graph $K_{2,2,2}$ with parts of size two and the following $(2+1)$-linear
hypergraph. A \emph{cherry 4-cycle} is the $(2+1)$-linear 3-graph with edge set $\{123, 124, 345,
\linebreak[1] 346, 567, 568, 781, 782\}$.

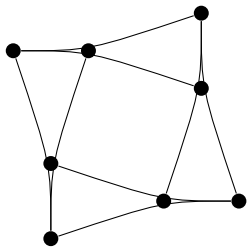
\begin{figure}[ht] 
  \center
  \begin{tikzpicture}
    \node (x1) at (-0.5,2) [vertex] {};
    \node (x2) at (0.5,2) [vertex] {};
    \node (y1) at (2,2.5) [vertex] {};
    \node (y2) at (2,1.5) [vertex] {};
    \node (z1) at (2.5,0) [vertex] {};
    \node (z2) at (1.5,0) [vertex] {};
    \node (w1) at (0,-0.5) [vertex] {};
    \node (w2) at (0,0.5) [vertex] {};

    \draw (x1) ..controls (x2) .. (y1);
    \draw (x1) ..controls (x2) .. (y2);
    \draw (y1) ..controls (y2) .. (z1);
    \draw (y1) ..controls (y2) .. (z2);
    \draw (z1) ..controls (z2) .. (w1);
    \draw (z1) ..controls (z2) .. (w2);
    \draw (w1) ..controls (w2) .. (x1);
    \draw (w1) ..controls (w2) .. (x2);
  \end{tikzpicture}
  \caption{cherry $4$-cycle}
\end{figure} 

It is straightforward to see that Theorem~\ref{thm:nopacking} applies to the cherry $4$-cycle.
Therefore one cannot hope that Theorem~\ref{thm:linearpacking} holds for all $(2+1)$-linear or
3-partite $F$.

Our final result considers the situation when the density is not fixed and goes to zero. Here the
notion of quasirandom is measured by spectral gap.  It is a folklore result that large spectral gap
guarantees a perfect matching in graphs. For hypergraphs, there are several definitions of
eigenvalues. We will use the definitions that originated in the work of Friedman and
Wigderson~\cite{ee-friedman95,ee-friedman95-2} for regular hypergraphs. The definition for all
hypergraphs can be found in \cite[Section 3]{hqsi-lenz-quasi12} where we specialize to $\pi = 1 +
\dots + 1$.  That is, let $\lambda_1(H) = \lambda_{1,1+\dots+1}(H)$ and let $\lambda_2(H) =
\lambda_{2,1+\dots+1}(H)$, where both $\lambda_{1,1+\dots+1}(H)$ and $\lambda_{2,1+\dots+1}(H)$ are
as defined in Section~3 of~\cite{hqsi-lenz-quasi12}.  The only result about eigenvalues that we will
require is Proposition~\ref{prop:expandermixing}, which is usually called the Expander Mixing
Lemma~\cite[Theorem 4]{hqsi-lenz-quasi12} (see also~\cite{ee-friedman95, ee-friedman95-2}).

\begin{thm} \label{thm:sparse}
  For every $\alpha > 0$, there exists $n_0$ and $\gamma > 0$ depending only on $\alpha$
  such that the following holds.  Let $H$ be an $n$-vertex $3$-graph where $3|n$ and
  $n \geq n_0$.  Let $p = 6|H|/n^3$ and assume that $\delta_2(H) \geq \alpha p n$ and
  \begin{align*}
    \lambda_2(H) \leq \gamma p^{16} n^{3/2}.
  \end{align*}
  Then $H$ contains a perfect matching.
\end{thm}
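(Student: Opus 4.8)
The plan is to use the absorbing method. Throughout write $V=V(H)$, and recall that since $3\mid n$ and every matching covers a multiple of $3$ vertices, the set of vertices missed by a matching always has size divisible by $3$. Call a matching $M$ in $H$ \emph{absorbing} if there is a $\rho>0$ such that for every $W\subseteq V\setminus V(M)$ with $3\mid|W|$ and $|W|\le\rho n$, the induced hypergraph $H[V(M)\cup W]$ has a perfect matching. Given an absorbing matching $M$, the proof concludes as follows: take a maximal matching $M'$ of $H-V(M)$, let $W$ be the set of vertices it misses, check that $|W|\le\rho n$, and then combine $M$, $M'$, and a perfect matching of $H[V(M)\cup W]$ into a perfect matching of $H$.

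First I would dispose of the near-perfect matching step, which uses only the eigenvalue hypothesis. By Proposition~\ref{prop:expandermixing}, for all $X_1,X_2,X_3\subseteq V$ the quantity $e(X_1,X_2,X_3)$ differs from its expected value (of order $p\,|X_1||X_2||X_3|$) by at most $\lambda_2(H)\sqrt{|X_1||X_2||X_3|}$. Applying this with $X_1=X_2=X_3=U$ shows that $e(U,U,U)>0$, and hence that $U$ spans an edge, as soon as $p\,|U|^3>\lambda_2(H)\,|U|^{3/2}$, i.e.\ as soon as $|U|$ exceeds $(\lambda_2(H)/p)^{2/3}$, which by hypothesis is at most $\gamma^{2/3}p^{10}n$. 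Since these bounds apply to arbitrary subsets of $V$ (in particular to subsets avoiding $V(M)$, so $\lambda_2(H-V(M))$ never needs to be estimated), a maximal matching of $H-V(M)$ misses at most $\gamma^{2/3}p^{10}n$ vertices. Thus it suffices to arrange $\rho\ge\gamma^{2/3}p^{10}$.

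The heart of the argument is the construction of the absorbing matching, and here both hypotheses are needed. Fix a small absolute constant $t$ and, for a triple $T=\{x,y,z\}$, call a matching $F$ of size $t$ on a vertex set disjoint from $T$ an \emph{absorber} for $T$ if both $H[V(F)]$ and $H[V(F)\cup T]$ have perfect matchings (for $t=1$, an absorber is an edge $\{a,b,c\}$ with $\{x,y,a\},\{z,b,c\}\in H$). The key lemma is that there are $\beta=\beta(\alpha)>0$ and a constant $c$ so that \emph{every} triple $T$ has at least $\beta p^{c}n^{3t}$ absorbers. To count absorbers one peels off the vertices of $T$ using the $2$-degree hypothesis --- each pair inside $T$ has at least $\alpha pn$ extensions, producing candidate sets of size $\Omega(\alpha pn)$ --- and then counts the remaining edges among these sets and within $V$ by Proposition~\ref{prop:expandermixing}, which is legitimate precisely because those sets have size $\Omega(\alpha pn)$, well above the threshold $\gamma^{2/3}p^{10}n$ at which the Expander Mixing Lemma is useful. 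Granting the key lemma, a probabilistic selection in the style of R\"odl--Ruci\'nski--Szemer\'edi~\cite{pp-rodl09} --- include each potential absorber independently with probability $\Theta(\beta p^{c}n^{1-3t})$, then delete one member of every intersecting pair --- produces a matching $M$ with $|V(M)|=O(p^{c}n)$ such that every triple $T$ has $\Omega(p^{2c}n)$ pairwise disjoint absorbers inside $V(M)$. Absorbing the at most $\gamma^{2/3}p^{10}n$ leftover vertices of $H-V(M)$ three at a time then completes a perfect matching, provided $\gamma$ is chosen small enough (depending only on $\alpha$) that $\gamma^{2/3}p^{10}n$ stays below the absorbing capacity $\Omega(p^{2c}n)$; carrying out this balancing between the loss in the near-perfect matching step and the loss in the absorber count with the precise constants is what yields the exponent $16$ in the hypothesis on $\lambda_2$.

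The main obstacle is the key lemma: a good lower bound on the number of absorbers uniformly over all triples. The difficulty is that Proposition~\ref{prop:expandermixing} only controls edge densities among sets of size $\gg\gamma^{2/3}p^{10}n$, whereas an absorber for $T$ must be anchored at the three specific vertices $x,y,z$, about whose links the global eigenvalue bound says little; a careless choice of absorber leads to quantities such as the number of pairs common to the links $L_x$ and $L_w$, which $\lambda_2(H)$ alone does not bound below. The resolution is to choose the absorber so that, once the initial peeling step has used at most one vertex of $T$ per edge, every remaining constraint is an edge among large sets, and then to track the powers of $p$ that accumulate while keeping the per-absorber loss minimal. By comparison the remaining ingredients --- the Expander Mixing computation for the near-perfect matching, the Chernoff estimate and union bound over the at most $n^3$ triples in the probabilistic selection, and the verification that deleting $V(M)$ and then the leftover set does not spoil the hypotheses --- are routine.
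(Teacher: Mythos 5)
Your overall strategy is the paper's: an absorbing matching built from many absorbers per triple via the R\"odl--Ruci\'nski--Szemer\'edi random selection, a greedy near-perfect matching on the complement controlled by Proposition~\ref{prop:expandermixing}, and a final balancing that produces the exponent $16$ exactly as you describe (the capacity of the absorber is $\Theta(p^{10}n)$ because each triple has $\Theta(p^{5}n^{6})$ absorbers, and $16=1+\tfrac{3}{2}\cdot 10$). The near-perfect-matching computation and the bookkeeping are correct.

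The substantive gap is that the key lemma's absorber is never actually constructed. You correctly diagnose why the naive single-edge absorber $\{a,b,c\}$ with $\{x,y,a\},\{z,b,c\}\in H$ fails --- it requires a lower bound on the number of pairs common to the links of $z$ and $a$, which neither hypothesis supplies --- and you correctly state the property the right gadget must have (after the peeling step, every remaining constraint should be an edge among sets of size $\Omega(\alpha pn)$), but you stop at the specification; producing such a gadget is the one nontrivial idea in the proof. The paper's construction (Lemma~\ref{lem:sparse-edge-absorb}) is a $6$-set, i.e.\ $t=2$ in your notation: pick an edge $\{x_1,x_2,x_3\}$ disjoint from $B=\{b_1,b_2,b_3\}$ (almost all of the $pn^3/6$ edges qualify), let $Y_i\subseteq N(b_i,x_i)$ with $|Y_i|=\alpha pn$ using the codegree hypothesis --- note each such pair uses exactly one vertex of $B$ --- and take any edge $\{y_1,y_2,y_3\}$ with $y_i\in Y_i$, which exists in abundance by the mixing lemma since the $Y_i$ are large. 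Then $\{x_1x_2x_3,\,y_1y_2y_3\}$ and $\{b_1x_1y_1,\,b_2x_2y_2,\,b_3x_3y_3\}$ are the two required matchings, giving $\frac{1}{16}\alpha^4p^5n^6$ absorbers per triple under the mild condition $\lambda_2(H)\le\frac12\alpha^2p^{5/2}n^{3/2}$. Without exhibiting some such gadget the proposal does not close.

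A second, smaller omission: the Chernoff and union-bound step you call routine requires the per-triple absorber count $\ell$ to exceed roughly $n^{a-1/2}\log n$, which fails if $p$ is too small, and no lower bound on $p$ is among the hypotheses. One must first be extracted from the eigenvalue condition: the paper applies the mixing lemma to two singletons and their non-coneighborhood to get $\lambda_2(H)\ge p\sqrt{(1-2p)n}$, which combined with $\lambda_2(H)\le\gamma p^{16}n^{3/2}$ forces $p\gg n^{-1/10}\log^{1/5}n$, comfortably enough.
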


Let $\Delta_2(H)$ be the \emph{maximum codegree of a $3$-graph $H$}, i.e.\ the maximum of $d(S)$
over all $2$-sets $S\subseteq V(H)$.  If $\Delta_2(H) \leq c pn$ then $\lambda_1(H) \leq c'pn^{3/2}$
where $c'$ is a constant depending only on $c$.  This implies the following corollary.

\begin{cor}
  For every $\alpha > 0$, there exists $n_0$ and $\gamma > 0$ depending only on $\alpha$
  such that the following holds.  Let $H$ be an $n$-vertex $3$-graph where $3|n$ and
  $n \geq n_0$.  Let $p = |H|/\binom{n}{3}$ and assume that $\delta_2(H) \geq \alpha pn$,
  $\Delta_2(H) \leq \frac{1}{\alpha} p n$, and
  \begin{align*}
    \lambda_2(H) \leq \gamma p^{15} \lambda_1(H).
  \end{align*}
  Then $H$ contains a perfect matching.
\end{cor}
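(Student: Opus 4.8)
The plan is to derive the corollary from Theorem~\ref{thm:sparse}; essentially all that is needed is to reconcile the two normalizations of the density and to convert the relative eigenvalue bound $\lambda_2(H)\le\gamma p^{15}\lambda_1(H)$ into the absolute bound demanded by Theorem~\ref{thm:sparse}. Write $p:=|H|/\binom{n}{3}$ for the density appearing in the corollary and $q:=6|H|/n^3$ for the density appearing in Theorem~\ref{thm:sparse}.

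First I would record the elementary comparison $q<p\le\tfrac{9}{5}q$, valid whenever $n\ge 6$: indeed $p/q=\tfrac{n^3/6}{\binom{n}{3}}=\tfrac{n^2}{(n-1)(n-2)}$, which is larger than $1$, decreases to $1$, and equals $\tfrac{9}{5}$ at $n=6$. In particular the hypothesis $\delta_2(H)\ge\alpha pn$ already gives $\delta_2(H)\ge\alpha qn$, so the codegree hypothesis of Theorem~\ref{thm:sparse} holds with the same constant $\alpha$. Next I would invoke the bound $\lambda_1(H)\le c'\,pn^{3/2}$ valid under $\Delta_2(H)\le\tfrac{1}{\alpha}pn$, with $c'=c'(\alpha)$; this is exactly the fact recorded just before the statement of the corollary. (For completeness: taking unit vectors $x,y,z$ on $V(H)$ and setting $M_{ij}:=\sum_k A_{ijk}z_k$ where $A$ is the adjacency tensor of $H$, Cauchy--Schwarz gives $M_{ij}^2\le d(\{i,j\})\sum_k A_{ijk}z_k^2\le\tfrac{1}{\alpha}pn\sum_k A_{ijk}z_k^2$, and summing over $i,j$ together with the bound $d(\{v\})\le\tfrac{1}{2\alpha}pn^2$ for every vertex $v$ yields $\sum_{i,j}M_{ij}^2\le\tfrac{1}{\alpha^2}p^2n^3$; hence $\sum_{i,j,k}A_{ijk}x_iy_jz_k\le\|M\|_{\mathrm F}\le\tfrac{1}{\alpha}pn^{3/2}$, which bounds $\lambda_1(H)$ up to the normalizing constant fixed in Section~3 of~\cite{hqsi-lenz-quasi12}.) Combining this with the hypothesis and the density comparison,
\begin{align*}
  \lambda_2(H)\ \le\ \gamma p^{15}\lambda_1(H)\ \le\ \gamma c'\, p^{16} n^{3/2}\ \le\ \gamma c'\bigl(\tfrac{9}{5}\bigr)^{16} q^{16} n^{3/2}.
\end{align*}

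Finally, let $\gamma_0=\gamma_0(\alpha)>0$ and $n_1=n_1(\alpha)$ be the constants produced by Theorem~\ref{thm:sparse} for the parameter $\alpha$. Setting $n_0:=\max\{n_1,6\}$ and choosing $\gamma=\gamma(\alpha)>0$ small enough that $\gamma c'(\alpha)\bigl(\tfrac{9}{5}\bigr)^{16}\le\gamma_0$, the displayed inequality gives $\lambda_2(H)\le\gamma_0\,q^{16}n^{3/2}$, so Theorem~\ref{thm:sparse} (applied with its density $q$) yields a perfect matching in $H$. I do not expect any real obstacle: the proof is bookkeeping plus the single spectral estimate $\lambda_1(H)\le c'(\alpha)pn^{3/2}$. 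The one point worth flagging is that this estimate --- and hence the reduction --- genuinely requires the maximum-codegree hypothesis $\Delta_2(H)\le\tfrac{1}{\alpha}pn$: without an upper bound on codegrees, $\lambda_1(H)$ need not be $O_\alpha(pn^{3/2})$, and then the corollary's hypothesis $\lambda_2(H)\le\gamma p^{15}\lambda_1(H)$ is too weak to feed into Theorem~\ref{thm:sparse}.
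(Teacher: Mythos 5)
Your proposal is correct and follows exactly the route the paper intends: use the remark preceding the corollary ($\Delta_2(H)\le\frac{1}{\alpha}pn$ implies $\lambda_1(H)\le c'(\alpha)pn^{3/2}$) to turn the relative bound $\lambda_2\le\gamma p^{15}\lambda_1$ into the absolute bound $\lambda_2\le\gamma_0 q^{16}n^{3/2}$ required by Theorem~\ref{thm:sparse}, then invoke that theorem. Your care with the two density normalizations ($|H|/\binom{n}{3}$ versus $6|H|/n^3$) and the justification of the $\lambda_1$ bound are details the paper leaves implicit, but they match its argument.
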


The third largest eigenvalue of a graph is closely related to its matching number (see
e.g.~\cite{pp-cioaba09}), but currently we do not know the ``correct'' definition of $\lambda_3$ for
hypergraphs.  It would be interesting to discover a definition of $\lambda_3$ for $k$-graphs which
extends the graph definition and for which a bound on
$\lambda_3$ forces a perfect matching.

The remainder of this paper is organized as follows.  In Section~\ref{sec:tools} we will develop the
tools neccisary for our proofs, including extensions of the absorbing technique and various
embedding lemmas.  Then in Section~\ref{sec:existance} we will use these to prove
Theorem~\ref{thm:linearpacking} (Section~\ref{sec:linear}) and Theorem~\ref{thm:three-unif-packings}
(Sections~\ref{sec:Cherry} and~\ref{sec:cycle}).  Section~\ref{sec:constr} contains the
construction proving Theorem~\ref{thm:nopacking} and Section~\ref{sec:Sparse} has the proof of the
sparse case, Theorem~\ref{thm:sparse}.

\section{Tools} 
\label{sec:tools}

In this section, we state and prove several lemmas and propositions that we will need; our main tool is the
absorbing technique of R\"odl-Ruci\'nski-Szemer\'edi~\cite{pp-rodl09}.

\begin{definition}
  Let $F$ and $H$ be $k$-graphs and let $A, B \subseteq V(H)$.  We say that $A$
  \emph{$F$-absorbs} $B$ or that $A$ is an \emph{$F$-absorbing set} for $B$ if both $H[A]$ and $H[A
  \cup B]$ have perfect $F$-packings.  When $F$ is a single edge, we say that $A$
  \emph{edge-absorbs} $B$.
\end{definition}

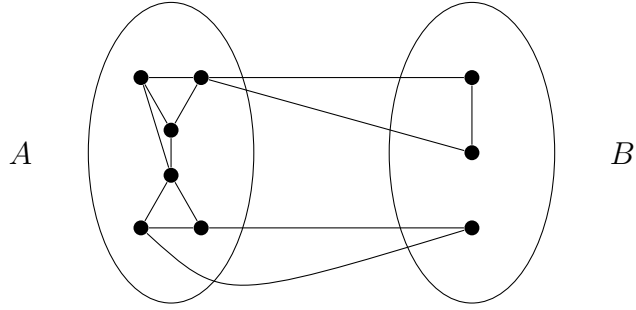
\begin{figure}[ht] 
  \center
  \begin{tikzpicture}
    \draw ellipse (1.1cm and 2cm);
    \draw (4,0) ellipse (1.1cm and 2cm);
    \node at (-2,0) {$A$};
    \node at (6,0) {$B$};

    \node (a1) at (-0.4,1) [vertex] {};
    \node (a2) at (0.4,1) [vertex] {};
    \node (a3) at (0,0.3) [vertex] {};
    \draw (a1) -- (a2) -- (a3) -- (a1);

    \node (a4) at (-0.4,-1) [vertex] {};
    \node (a5) at (0.4,-1) [vertex] {};
    \node (a6) at (0,-0.3) [vertex] {};
    \draw (a4) -- (a5) -- (a6) -- (a4);
    \draw (a3) -- (a6) -- (a1);

    \node (b1) at (4,1) [vertex] {};
    \node (b2) at (4,0) [vertex] {};
    \node (b3) at (4,-1) [vertex] {};
    \draw (a2) -- (b1) -- (b2) -- (a2);
    \draw (a5) -- (b3) .. controls (0.7,-2) .. (a4);
  \end{tikzpicture}
  \caption{$A$ $K_3$-absorbs $B$}
\end{figure} 

\begin{definition}
  Let $F$ and $H$ be $k$-graphs, $\epsilon > 0$, $a$ and $b$ be multiples of $v(F)$,
  $\mathcal{A} \subseteq \binom{V(H)}{a}$, and $\mathcal{B} \subseteq \binom{V(H)}{b}$.  We say
  that $H$ is \emph{$(\mathcal{A},\mathcal{B},\epsilon,F)$-rich} if for all $B \in \mathcal{B}$
  there are at least $\epsilon n^a$ sets in $\mathcal{A}$ which $F$-absorb $B$.  If $\mathcal{A} =
  \binom{V(H)}{a}$, we abbreviate this to $(a,\mathcal{B},\epsilon,F)$-rich and if both $\mathcal{A}
  = \binom{V(H)}{a}$ and $\mathcal{B} = \binom{V(H)}{b}$, we abbreviate this to
  $(a,b,\epsilon,F)$-rich.
\end{definition}

The following proposition is one of the main results of this section; the proof appears in
Section~\ref{sub:proof:richimpliespacking}.

\begin{prop} \label{prop:richimpliespacking}
  Fix $0 < p < 1$, let $F$ be a $k$-graph such that $F$ is either linear or $k$-partite, and
  let $a$ and $b$ be multiples of $v(F)$.  For any $\epsilon > 0$, there exists an $n_0$ and $\mu
  > 0$ such that the following holds. If $H$ is an $(a,b,\epsilon,F)$-rich, \npmu $k$-graph where
  $v(F)|n$, then $H$ has a perfect $F$-packing.
\end{prop}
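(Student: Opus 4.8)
The plan is to use the standard absorbing-method architecture of R\"odl--Ruci\'nski--Szemer\'edi, adapted to $F$-packings. The key is to build one \emph{absorbing structure} out of many small absorbing sets, and then handle the leftover vertices by an \emph{almost-perfect packing} statement which must itself follow from $(p,\mu)$-density of $H$ together with the hypothesis on minimum degree (for linear or $k$-partite $F$).

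\medskip
\textbf{Step 1: Absorbing set.} First I would show that a random choice of a family $\mathcal{F}$ of roughly $\eta n / a$ disjoint $a$-sets (for a suitable small constant $\eta = \eta(\epsilon)$) from $\mathcal{A}$ has, with positive probability, the following property: every set $B \in \binom{V(H)}{b}$ is $F$-absorbed by at least one member of $\mathcal{F}$ (in fact by $\Omega(n)$ of them). This is a routine second-moment / concentration computation using the $(a,b,\epsilon,F)$-richness: for a fixed $B$, the expected number of $A \in \mathcal{F}$ that $F$-absorb $B$ is $\Omega(\epsilon n)$, and since absorbing sets are plentiful, a Chernoff bound beats the union bound over the $\binom{n}{b} = n^{O(1)}$ choices of $B$. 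Let $A^* = \bigcup_{A \in \mathcal{F}} A$; since $H[A]$ has a perfect $F$-packing for each $A \in \mathcal{A}$, also $H[A^*]$ has a perfect $F$-packing, and $|A^*| \le \eta n$.

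\medskip
\textbf{Step 2: Almost-perfect packing on the rest.} Set $V' = V(H) \setminus A^*$. I would next invoke an almost-perfect-packing lemma: since $H[V']$ inherits $(p, 2\mu)$-density (deleting $\le \eta n$ vertices changes edge counts by $O(\eta n^k)$, absorbed into the error term if $\eta \ll \mu$... actually one wants $\mu$ small relative to $\eta$, so one chooses constants in the order $\epsilon \to \eta \to \mu \to 1/n_0$), there is an $F$-packing of $H[V']$ covering all but at most $b$ vertices --- indeed all but at most some bounded number, which we can pad up to an exact multiple of $b$ by releasing at most $v(F)$ more copies. The existence of such an almost-perfect $F$-packing for linear or $k$-partite $F$ in any $(n, p, \mu)$ $k$-graph should be a separate embedding lemma proved in Section~\ref{sec:tools} (greedy embedding of copies of $F$ one at a time, or a weak hypergraph regularity / fractional relaxation argument, using that $(p,\mu)$-density gives the right count of copies of every linear or $k$-partite $F$); I would cite whichever version is available there.

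\medskip
\textbf{Step 3: Absorb and finish.} Let $B$ be the set of uncovered vertices; then $|B| \le b$ and $v(F) \mid |B|$. If $|B| < b$ I would add to $B$ some copies of $F$ from the almost-perfect packing to make $|B| = b$ exactly (adjusting the packing accordingly), or alternatively prove the richness / absorbing step for all $B$ with $|B| \le b$ — the cleanest route is to note $B$ can be assumed to have size exactly $b$ after moving a few copies. Now by Step 1 there is $A \in \mathcal{F}$ with $A \subseteq A^*$ that $F$-absorbs $B$, i.e.\ $H[A \cup B]$ has a perfect $F$-packing. Replacing the perfect $F$-packing of $A$ inside $A^*$ by a perfect $F$-packing of $A \cup B$, and keeping the $F$-packing of $V' \setminus B$ and of $A^* \setminus A$, yields a perfect $F$-packing of all of $V(H)$.

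\medskip
The main obstacle is \textbf{Step 2}, the almost-perfect packing: $(p,\mu)$-density alone is a purely ``counting'' condition, and turning a count of copies of $F$ into an actual near-spanning collection of \emph{vertex-disjoint} copies requires real work — either a nibble/greedy argument controlled by the minimum-degree hypothesis (to ensure every partial packing can be extended as long as $\Omega(n)$ vertices remain), or passing through a fractional $F$-tiling and rounding. The linear vs.\ $k$-partite dichotomy matters exactly here: for linear $F$ the relevant hypergraph-of-copies is itself nearly regular with small codegrees so a nibble applies, and for $k$-partite $F$ one embeds part-by-part using the density condition on the $k$-fold product. I expect the richness hypothesis and the minimum-degree bound $\delta(H) \ge \alpha \binom{n}{k-1}$ to be used precisely to keep the greedy extension from getting stuck and to guarantee the final few vertices can be covered.
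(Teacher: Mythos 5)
Your overall architecture is the same as the paper's: a randomly chosen family of disjoint $a$-sets from $\mathcal{A}$ forming an absorbing structure (Chernoff plus a union bound over the $n^{O(1)}$ sets $B$, exactly as in the paper's Absorbing Lemma), then an almost-perfect $F$-packing of the remainder obtained greedily from the counting/supersaturation lemmas, then absorption of the leftover. However, Step 2 as you state it is false and this propagates into Step 3. From $(p,\mu)$-density with a \emph{fixed} $\mu>0$ you cannot cover all but $O(1)$ vertices: the density condition is perfectly consistent with a set $W$ of $\mu^{1/k}n$ vertices lying in no edge at all (then $e(W,\dots,W)=0\geq p|W|^k-\mu n^k$), and no $F$-packing can touch $W$. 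The counting lemma for a subset $C$ has error term $\gamma n^k$ measured against the whole graph, so it only produces a copy of $F$ inside $C$ when $|C|=\Omega(n)$; hence the greedy extraction necessarily stalls with a leftover of size $\omega n$ for some small constant $\omega=\omega(\mu)$, not of size at most $b$. This is precisely what the paper's Almost Perfect Packing Lemma delivers and no more.

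Consequently your Step 3 cannot get by with a single absorber: the leftover $C$ has linear size, must be partitioned into $|C|/b=\Omega(n)$ sets from $\binom{V}{b}$, and each must be absorbed by a \emph{distinct} member of the disjoint family $\mathcal{F}$. The repair is already latent in your Step 1: since every $b$-set is absorbed by $\Omega(n)$ members of $\mathcal{F}$, one arranges the constants so that $|C|/b$ is smaller than that count and assigns absorbers greedily (this is the content of the paper's Lemma~\ref{lem:absorbing}, with the threshold $\omega n$ on $|C|$ chosen accordingly). One further small correction: Proposition~\ref{prop:richimpliespacking} carries no minimum degree hypothesis (it assumes only \npmu, not \npmua), and none is needed for the greedy packing step; the minimum degree enters only when establishing richness for specific $F$ and in the separable variant, Proposition~\ref{prop:richseparableimpliespacking}.
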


We will actually need a slight extension of Proposition~\ref{prop:richimpliespacking} for some of
our results and this requires an additional definition.

\begin{definition}
  Let $\zeta > 0$, $t$ be any integer, $H$ be a $3$-graph, and $B \subseteq V(H)$ with $|B| = 2t$.
  We say that $B$ is \emph{$\zeta$-separable} if there exists a partition of $B$ into $B_1, \dots,
  B_t$ such that for all $i$ $|B_i| = 2$ and $d_H(B_i) \geq \zeta n$. Set
  \begin{align*}
    \mathcal{B}_{\zeta,b}(H) := \left\{ B \in \binom{V(H)}{b} : B \text{ is $\zeta$-separable}
    \right\}.
  \end{align*}
  If $H$ is obvious from context, we will denote this by $\mathcal{B}_{\zeta,b}$.
\end{definition}

The second main result proved in this section is that the property $(a,b,\epsilon,F)$-rich can be
replaced by $(a,\mathcal{B}_{\zeta,b},\epsilon,F)$-rich in
Proposition~\ref{prop:richimpliespacking}; the proof is in Section~\ref{sub:separable}.

\begin{prop} \label{prop:richseparableimpliespacking}
  Fix $0 < p,\alpha < 1$ and let $\zeta = \min\{\frac{p}{4}, \frac{\alpha}{4}\}$. Let $F$ be a
  $3$-graph such that $F$ is either linear or $k$-partite, let $v(F)|a$, and let $v(F)|b$ where in
  addition $b$ is even.  For any $\epsilon > 0$, there exists an $n_0$ and $\mu > 0$ such that the
  following holds. If $H$ is an $(a,\mathcal{B}_{\zeta,b},\epsilon,F)$-rich \npmua $3$-graph where
  $v(F)|n$, then $H$ has a perfect $F$-packing.
\end{prop}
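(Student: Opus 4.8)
The plan is to mimic the proof of Proposition~\ref{prop:richimpliespacking}, using the standard R\"odl--Ruci\'nski--Szemer\'edi absorbing scheme, but replacing the blind "any $b$-set" argument with one that only needs to absorb $\zeta$-separable sets. Recall the usual two-stage strategy: first find a small \emph{absorbing structure} $A_0 \subseteq V(H)$ with $|A_0|$ divisible by $v(F)$, such that $H[A_0]$ has a perfect $F$-packing and, crucially, for \emph{every} small leftover set $B$ of an appropriate bounded size, $H[A_0 \cup B]$ also has a perfect $F$-packing; then greedily (or via a fractional/almost-perfect packing argument using that $H$ is $(p,\mu)$-dense and has linear minimum degree) cover all but a tiny set $R$ of vertices by disjoint copies of $F$; finally absorb $R$ into $A_0$. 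The only point where the present statement differs is that we must guarantee that the leftover set $R$ is $\zeta$-separable, so that it lies in $\mathcal{B}_{\zeta,|R|}$ and the $(a,\mathcal{B}_{\zeta,b},\epsilon,F)$-rich hypothesis (applied iteratively, or in a single amplification step as in the proof of Proposition~\ref{prop:richimpliespacking}) can be invoked.

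Concretely, I would proceed as follows. \textbf{Step 1 (Absorber).} Using the $(a,\mathcal{B}_{\zeta,b},\epsilon,F)$-rich hypothesis, show that a random subset of $\binom{V(H)}{a}$-absorbers of appropriate size simultaneously works for all $B \in \mathcal{B}_{\zeta,b}$; a union bound over the at most $\binom{n}{b}$ separable sets, against concentration of the number of available absorbers (each $B$ has $\ge \epsilon n^a$ of them), yields a single family $\mathcal{F}$ of disjoint absorbing sets whose union $A_0$ has $|A_0| = o(n)$, $v(F) \mid |A_0|$, $H[A_0]$ has a perfect $F$-packing, and $A_0$ can additionally absorb any single $\zeta$-separable set of size $b$; chaining $b/2$-at-a-time (or directly, exactly as in the proof of Proposition~\ref{prop:richimpliespacking}) then lets $A_0$ absorb any $\zeta$-separable set of size up to some $m = o(n)$. \textbf{Step 2 (Almost-perfect packing avoiding $A_0$, with separable remainder).} On $V(H) \setminus A_0$, which is still $(p,\mu')$-dense with minimum degree $\ge (\alpha - o(1))\binom{n}{k-1}$, greedily remove disjoint copies of $F$ until fewer than $m$ vertices remain; here I must be careful to stop the greedy process in a way that leaves a \emph{separable} remainder. \textbf{Step 3 (Make the remainder separable).} This is where $\zeta = \min\{p/4,\alpha/4\}$ enters: I claim that any sufficiently large vertex set $R$ in an $(n,p,\mu,\alpha)$ $3$-graph can be partitioned into pairs of codegree $\ge \zeta n$ — indeed, the density condition forces $e(R,R,V(H)) \ge p|R|^2 n - \mu n^3$, so the average codegree of a pair inside $R$ is roughly $p|R|$, and a greedy matching argument on the "good pair" graph (pairs $\{u,v\}$ with $d_H(\{u,v\}) \ge \zeta n$) shows this graph has a perfect matching on $R$ once $|R|$ is not too small relative to $\mu$; the minimum-degree hypothesis is the safety net when $|R|$ is tiny. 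So we may assume WLOG (shrinking $m$ if needed) that the remainder after Step~2 is $\zeta$-separable, hence lies in $\mathcal{B}_{\zeta,|R|}$. \textbf{Step 4 (Absorb).} Apply the absorbing property of $A_0$ to $R$: $H[A_0 \cup R]$ has a perfect $F$-packing, and together with the copies removed in Step~2 and the packing of $H[A_0]$ this gives a perfect $F$-packing of $H$.

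The main obstacle is \textbf{Step 3}: ensuring the leftover vertex set is $\zeta$-separable while it is simultaneously forced to be small by Step~2. One has to interleave the two: rather than running the greedy almost-packing to completion and then hoping the remainder is separable, I would stop it while $\Theta(m)$ vertices remain, then use the density condition to extract from those vertices a $\zeta$-separable set of the exact residue size modulo $v(F)$ plus whatever is needed, absorb that, and handle the genuinely tiny residual (size $< $ some constant depending only on $v(F)$) directly via the minimum-degree condition, which guarantees every pair of vertices extends to many edges. The bookkeeping — choosing the hierarchy of constants $\mu \ll \zeta, \alpha, p, \epsilon, 1/a, 1/b$ and the intermediate cutoffs $m$ — is routine once this interleaving is set up, and the rest of the argument is a verbatim adaptation of the proof of Proposition~\ref{prop:richimpliespacking}, replacing "$b$-set" by "$\zeta$-separable $b$-set" throughout.
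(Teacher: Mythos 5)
Your overall architecture (absorber via Lemma~\ref{lem:absorbing}, almost-perfect packing, then absorb a $\zeta$-separable leftover) matches the paper's, which indeed reduces everything to a variant of the Almost Perfect Packing Lemma whose leftover is guaranteed to partition into sets from $\mathcal{B}_{\zeta,b}$ (Lemma~\ref{lem:seppacking}). But your Step 3 has a genuine gap, and you have correctly identified it as the crux without resolving it. Two specific problems. First, the density condition does not give a \emph{perfect} matching of the ``good pair'' graph $G$ (pairs of codegree $\geq \zeta n$) restricted to the leftover $R$: from $e(T,S,V(H)) \geq p|T||S|n - \mu n^3$ you only learn that any two disjoint subsets of $R$ of size $\gtrsim \sqrt{\mu}\,n$ are joined by a $G$-edge, which via a greedy or Hall-type argument yields a matching missing up to $\Theta(\sqrt{\mu}\,n)$ vertices of $R$ --- a single vertex of $R$ may have all of its $G$-neighbors outside $R$, so $G[R]$ can have isolated vertices no matter how large $R$ is. (Also, your ``average codegree $\approx p|R|$'' should be $\approx 2pn$; the third coordinate ranges over $V(H)$.) Second, your proposed safety net for the tiny residual --- ``the minimum-degree condition guarantees every pair of vertices extends to many edges'' --- is false: $\delta(H) \geq \alpha\binom{n}{2}$ is a condition on single vertices and says nothing about the codegree of a fixed pair, which can be zero. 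A constant-size residual need not be $\zeta$-separable at all, and no amount of interleaving the greedy packing fixes this, because the obstruction is local to individual vertices.

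The paper's missing ingredient is a random padding trick. After the greedy almost-perfect packing $F_1,\dots,F_t$ leaves a small set $W$, one forms $C$ by adding to $W$ each \emph{entire copy} $V(F_i)$ independently with probability $\phi$ (plus a few more copies for divisibility). This keeps $H[\bar C]$ perfectly packed and $|C| \leq \omega n$, and --- the key point --- since $\delta(H) \geq \alpha\binom{n}{2}$ forces every vertex to have at least $\frac{\alpha}{2}n$ $G$-neighbors spread over at least $\frac{\alpha}{4f}n$ of the copies $F_i$, Chernoff plus a union bound gives $\delta(G[C]) \geq \frac{\alpha\phi}{8f}n$ with positive probability. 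Now Hall's condition for a perfect matching of $G[C]$ can be verified: a violating set $T$ and its non-neighborhood $C_2\setminus N_G(T)$ must both have size $\Omega(n)$ by the minimum degree of $G[C]$, and then $(p,\mu)$-density forces $e_H(T, C_2\setminus N_G(T), V(H)) \geq \frac{p}{2}|T||C_2\setminus N_G(T)|n$, contradicting the upper bound $\zeta|T||C_2\setminus N_G(T)|n$ that follows from the absence of $G$-edges (here $\zeta < \frac{p}{2}$ is exactly what is needed). So the separability is manufactured by enlarging the leftover, not by hoping the leftover happens to be separable; without this step your argument does not close.
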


Note that if $b$ is even, $H$ is a $3$-graph, and $\delta(H) \geq \alpha \binom{n}{2}$, then
Proposition~\ref{prop:richseparableimpliespacking} implies
Proposition~\ref{prop:richimpliespacking}.  The proofs of Propositions~\ref{prop:richimpliespacking}
and~\ref{prop:richseparableimpliespacking} use the absorbing technique of
R\"odl-Ruci\'nski-Szemer\'edi~\cite{pp-rodl09}.  The two key ingredients are the Absorbing Lemma
(Lemma~\ref{lem:absorbing}) and the Embedding Lemmas (Lemma~\ref{lem:manyatvertex} for linear and
Lemma~\ref{lem:supersat} for $k$-partite).  The remainder of this section contains the statements
and proofs of these lemmas plus the proofs of both propositions.

\subsection{Absorbing Sets} 
\label{sec:absorbing}

R\"odl-Ruci\'nski-Szemer\'edi~\cite[Fact 2.3]{pp-rodl09} have a slightly different definition of
edge-absorbing where $B$ has size $k+1$ and one vertex of $A$ is left out of the perfect matching,
but the main idea transfers to our setting in a straightforward way as follows.  If $H$ is a
$k$-graph, $A\subseteq V(H)$, and $\mathcal{A}\subseteq 2^{V(H)}$, then we say that $A$
\emph{partitions into sets from} $\mathcal{A}$ if there exists a partition $A = A_1 \dot\cup \cdots
\dot\cup A_t$ such that $A_i \in \mathcal{A}$ for all $i$.

\begin{lemma} (Absorbing Lemma) \label{lem:absorbing}
  Let $F$ be a $k$-graph, $\epsilon > 0$, and $a$ and $b$ be multiples of $v(F)$.  There
  exists an $n_0$ and $\omega > 0$ such that for all $n$-vertex $k$-graphs $H$ with $n \geq n_0$,
  the following holds.  If $H$ is $(\mathcal{A},\mathcal{B},\epsilon,F)$-rich for some $\mathcal{A}
  \subseteq \binom{V(H)}{a}$ and $\mathcal{B} \subseteq \binom{V(H)}{b}$, then there exists an
  $A \subseteq V(H)$ such that $A$ partitions into sets from $\mathcal{A}$ and $A$
  $F$-absorbs all sets $C$ satisfying the following conditions: $C \subseteq V(H)\setminus A$, $|C|
  \leq \omega n$, and $C$ partitions into sets from $\mathcal{B}$.
\end{lemma}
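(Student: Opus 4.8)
The plan is to follow the standard absorbing-lemma paradigm of R\"odl--Ruci\'nski--Szemer\'edi. For each ``bad'' set $B \in \binom{V(H)}{b}$, the richness hypothesis guarantees $\varepsilon n^a$ sets $A' \in \mathcal{A}$ that $F$-absorb $B$. The idea is to select a small family $\mathcal{F}$ of pairwise disjoint candidate absorbing sets at random, and argue that with positive probability $\mathcal{F}$ simultaneously contains many absorbers for \emph{every} $B \in \mathcal{B}$; then taking $A = \bigcup_{A' \in \mathcal{F}} A'$ gives a set that absorbs any small $C$ that partitions into sets from $\mathcal{B}$.

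First I would sketch the probabilistic selection. Choose each $a$-set in $\mathcal{A}$ independently with probability $q = \gamma n^{1-a}$ for a small constant $\gamma = \gamma(\varepsilon, a, b)$, obtaining a random family $\mathcal{F}_0 \subseteq \mathcal{A}$. Standard first-moment and concentration (Chernoff/Markov) computations give, with probability bounded away from $0$: (i) $|\mathcal{F}_0| \le 2\gamma n$; (ii) the number of intersecting pairs in $\mathcal{F}_0$ is at most, say, $\gamma^2 a^2 n$ (since the expected number is $O(n^{2a} \cdot n^{-2} \cdot n^{a-1}) = O(\gamma^2 n)$ after accounting for the number of ordered pairs of $a$-sets sharing a vertex); and (iii) for every single $B \in \mathcal{B}$, the number of sets in $\mathcal{F}_0$ that $F$-absorb $B$ is at least $\tfrac12 \varepsilon q n^a = \tfrac{\gamma \varepsilon}{2} n$ — here I would use a union bound over the at most $\binom{n}{b} \le n^b$ choices of $B$, which is fine because the Chernoff bound on each individual count beats $n^{-b}$. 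Now delete one set from each intersecting pair and discard any set meeting the resulting deletions, leaving a subfamily $\mathcal{F}$ of pairwise disjoint members of $\mathcal{A}$ with $|\mathcal{F}| \le 2\gamma n$ and still at least $(\tfrac{\gamma\varepsilon}{2} - \gamma^2 a^2)n \ge \tfrac{\gamma\varepsilon}{4}n$ absorbers for each $B$, provided $\gamma$ is chosen small relative to $\varepsilon$.

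Next I would set $A := \bigcup_{A' \in \mathcal{F}} A'$. Since the members of $\mathcal{F}$ are pairwise disjoint and each lies in $\mathcal{A} \subseteq \binom{V(H)}{a}$, $A$ automatically partitions into sets from $\mathcal{A}$, and $H[A]$ has a perfect $F$-packing (each $A'$, being an $F$-absorbing set for some $B$, has $H[A']$ with a perfect $F$-packing by definition, and we take the union of these packings). Set $\omega := \tfrac{\gamma\varepsilon}{8b}$ and $n_0$ large. Now take any $C \subseteq V(H) \setminus A$ with $|C| \le \omega n$ that partitions as $C = C_1 \dot\cup \cdots \dot\cup C_s$ with each $C_j \in \mathcal{B}$; note $s = |C|/b \le \omega n / b$. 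I would greedily match each $C_j$ to a distinct absorbing set $A_{(j)} \in \mathcal{F}$ with $A_{(j)}$ an $F$-absorber for $C_j$: when processing $C_j$, at most $j - 1 < s \le \omega n/b \le \tfrac{\gamma\varepsilon}{8}n < \tfrac{\gamma\varepsilon}{4}n$ members of $\mathcal{F}$ have been used, and there are at least $\tfrac{\gamma\varepsilon}{4}n$ members absorbing $C_j$, so an unused one remains. For the unused members $A' \in \mathcal{F}$ not assigned to any $C_j$, use the perfect $F$-packing of $H[A']$; for each assigned pair, use the perfect $F$-packing of $H[A_{(j)} \cup C_j]$ guaranteed by the absorbing property. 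These packings are vertex-disjoint and together cover exactly $A \cup C$, giving the perfect $F$-packing of $H[A \cup C]$, i.e.\ $A$ $F$-absorbs $C$.

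The main obstacle is step (iii) of the probabilistic argument — ensuring the absorber count holds \emph{simultaneously} for all $B \in \mathcal{B}$. This is where the union bound over $\le n^b$ sets must be dominated by the Chernoff tail $\exp(-\Omega(\varepsilon q n^a)) = \exp(-\Omega(\varepsilon \gamma n))$; since $\varepsilon, \gamma$ are constants this is $e^{-\Omega(n)}$, comfortably smaller than $n^{-b}$ for $n \ge n_0$, so the obstacle is really just bookkeeping rather than a genuine difficulty. A secondary technical point is that the absorbing sets guaranteed by richness are \emph{counted} ($\varepsilon n^a$ of them) but may overlap heavily among themselves or with chosen sets; the disjointification in step (iii)--(iv) handles this, at the cost of the constants, and one must check that the number of absorbers lost to disjointification (bounded via (ii), an expected-$O(\gamma^2 n)$ quantity) is a lower-order term — which forces the choice $\gamma \ll \varepsilon$. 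No quasirandomness or density hypothesis on $H$ is needed for this lemma; it is purely a consequence of richness, which is why the statement is clean.
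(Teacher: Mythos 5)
Your proof follows essentially the same route as the paper's: the paper derives Lemma~\ref{lem:absorbing} by passing to the auxiliary $v(F)$-uniform hypergraph whose edges are the copies of $F$ (so $F$-absorption becomes edge-absorption) and invoking the quantitative Lemma~\ref{lem:sparse-absorbing} with $\ell = \lceil \epsilon n^a\rceil$, and the proof of that lemma is exactly your random-selection, Chernoff-plus-union-bound, disjointification argument with selection probability $q = \Theta(n^{1-a})$. One small point needs repair: you assert that every $A' \in \mathcal{F}$ is an $F$-absorbing set for some $B$, hence $H[A']$ has a perfect $F$-packing; but a random subfamily of an arbitrary $\mathcal{A} \subseteq \binom{V(H)}{a}$ may well contain sets that absorb nothing, and for those $H[A']$ need not have a perfect $F$-packing, which would break both ``$A$ partitions into a packable set'' and the treatment of the unused members of $\mathcal{F}$. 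The paper fixes this by explicitly discarding from the family every $a$-set that does not absorb at least one $B$ (of any size); this costs nothing, since the absorbers of each $B \in \mathcal{B}$ all survive the pruning. A cosmetic remark: your intermediate expression $O(n^{2a}\cdot n^{-2}\cdot n^{a-1})$ for the expected number of intersecting pairs is garbled, though the conclusion $O(\gamma^2 n)$ is correct --- the number of intersecting ordered pairs of $a$-sets is $O(n^{2a-1})$ and $q^2 = \gamma^2 n^{2-2a}$.
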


Using the idea of R\"odl-Ruci\'nski-Szemer\'edi~\cite{pp-rodl09}, Treglown and Zhao~\cite[Lemma
5.2]{pp-treglown09} proved the above lemma for $F$ a single edge, $a = 2k$, $b = k$, $\mathcal{A} =
\binom{V(H)}{a}$ and $\mathcal{B} = \binom{V(H)}{b}$.  For the sparse case (Theorem~\ref{thm:sparse})
we require a stronger version of Lemma~\ref{lem:absorbing} and so a proof of
Lemma~\ref{lem:absorbing} appears in Section~\ref{sec:Sparse} (as a corollary of
Lemma~\ref{lem:sparse-absorbing}).

\subsection{Embedding Lemmas and Almost Perfect Packings} 
\label{sub:embedding}

This section contains embedding lemmas for linear and $k$-partite $k$-graphs and a simple corollary
of these lemmas which produces a perfect $F$-packing covering almost all of the vertices.

\begin{definition}
  Let $F$ and $H$ be $k$-graphs with $V(F) = \{w_1,\dots,w_f\}$.  A \emph{labeled copy of $F$ in
  $H$} is an edge-preserving injection from $V(F)$ to $V(H)$.  A \emph{degenerate labeled copy of
  $F$ in $H$} is an edge-preserving map from $V(F)$ to $V(H)$ that is not an injection.  Let $1 \leq
  m \leq f$ and let $Z_1, \dots, Z_m \subseteq V(H)$.  Set $\inj[F \rightarrow H; w_1 \rightarrow
  Z_1, \dots, w_m \rightarrow Z_m]$ to be the number of edge-preserving injections $\psi : V(F)
  \rightarrow V(H)$ such that $\psi(w_i) \in Z_i$ for all $1 \leq i \leq m$.  In other words,
  $\inj[F \rightarrow H; w_1 \rightarrow Z_1, \dots, w_m \rightarrow Z_m]$ is the number of labeled
  copies of $F$ in $H$ where $w_i$ is mapped into $Z_i$ for all $1 \leq i \leq m$.  If $Z_i =
  \{z_i\}$, we abbreviate $w_i \rightarrow \{z_i\}$ as $w_i \rightarrow z_i$.
\end{definition}

\begin{lemma} \label{lem:manyatvertex}
  Let $0 < p,\alpha < 1$ and let $F$ be a linear $k$-graph where $0 \leq m \leq v(F)$ and $V(F) =
  \{s_1,\dots,s_m, \linebreak[1] t_{m+1},\dots,t_f\}$ such that there does not exist $E \in F$ with
  $|E \cap \{s_1,\dots,s_m\}| > 1$ and there do not exist $E_1, E_2 \in F$ with $|E_1 \cap
  \{s_1,\dots,s_m\}| = 1$, $|E_2 \cap \{s_1,\dots,s_m\}| = 1$, and $E_1 \cap E_2 \cap
  \{t_{m+1},\dots,t_f\} \neq \emptyset$.
  
  For every $\gamma > 0$, there exists an $n_0$  and $\mu > 0$ such that the following holds.  Let
  $H$ be an $n$-vertex $k$-graph with $n \geq n_0$ and let $y_1,\dots,y_m \in V(H), Z_{m+1}
  \subseteq V(H), \dots, Z_f \subseteq V(H)$.  Assume that for every $\{s_i,t_{j_2},\dots,t_{j_k}\}
  \in F$
  \begin{align} \label{eq:mindegatvertex}
    \left| \Big\{ \left( z_{j_2},\dots,z_{j_k} \right) \in Z_{j_2} \times \dots \times Z_{j_k} :
    \left\{ y_i, z_{j_2},\dots,z_{j_k} \right\} \in H \Big\} \right| \geq \alpha |Z_{j_2}| \cdots
    |Z_{j_k}|
  \end{align}
  and for every $\{t_{i_1},\dots,t_{i_k}\} \in F$ and every $Z'_{i_1} \subseteq Z_{i_1}, \dots,
  Z'_{i_k} \subseteq Z_{i_k}$,
  \begin{align} \label{eq:embedlineardense}
    e(Z'_{i_1},\dots,Z'_{i_k}) \geq p |Z_{i_1}| \cdots |Z_{i_k}| - \mu n^k.
  \end{align}
  Then 
  \begin{align*}
    \inj[F \rightarrow H;s_1 \rightarrow y_1, \dots, &s_m \rightarrow y_m, t_{m+1} \rightarrow
    Z_{m+1},\dots,t_f \rightarrow Z_f] \\
    &\geq 
    \alpha^{d_F(s_1)} \cdots \alpha^{d_F(s_m)} p^{|F|-\sum d_F(s_i)} |Z_{m+1}| \cdots |Z_f| -
    \gamma n^{f-m}.
  \end{align*}
\end{lemma}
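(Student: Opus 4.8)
The plan is to count edge-preserving injections $\psi : V(F) \to V(H)$ by building $\psi$ one vertex at a time, exploiting the linearity of $F$ to control how each newly embedded vertex interacts with the already-embedded ones. First I would fix an ordering $t_{m+1}, \dots, t_f$ of the non-pinned vertices of $F$ that is compatible with a suitable edge ordering; because $F$ is linear and the pinned set $\{s_1,\dots,s_m\}$ is an independent-ish set (no edge meets it in more than one vertex, and two edges meeting it in a vertex each are disjoint off of it), each edge of $F$ either lies entirely in $\{t_{m+1},\dots,t_f\}$ or meets $\{s_1,\dots,s_m\}$ in exactly one vertex. For the count, think of choosing $z_{m+1} \in Z_{m+1}$, then $z_{m+2}\in Z_{m+2}$, and so on, and at each step insisting $\{z_{i_1},\dots\}$ lies in $H$ for every edge of $F$ all of whose vertices have already been chosen (including the pinned images $y_1,\dots,y_m$). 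When the step-$i$ vertex $t_i$ completes an edge $E$ of $F$ that is entirely inside $\{t_{m+1},\dots,t_f\}$, use hypothesis \eqref{eq:embedlineardense} to say that the set of valid $z_i$ is large; when $t_i$ completes an edge meeting $\{s_1,\dots,s_m\}$, use hypothesis \eqref{eq:mindegatvertex}.

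The main technical step is to formalize "the set of good choices at step $i$ is a constant fraction of $|Z_i|$, up to an additive $\mu n$-type error." Here linearity of $F$ is essential: any two edges of $F$ share at most one vertex, so at step $i$ the vertex $t_i$ lies in at most one previously-completed-by-$t_i$ edge... more precisely, $t_i$ may lie in several edges of $F$, but each such edge $E$ introduces at most one new constraint, and because the edges are linear the sets of already-chosen coordinates in distinct such edges $E, E'$ overlap only in $t_i$ itself. This lets me write the number of valid completions $z_i$ as an intersection of a bounded number of "neighborhood" sets, and I would iterate the $(p,\mu)$-density / minimum-degree bounds over a nested sequence of subsets: shrink $Z_i$ to the set surviving the first constraint, then the second, and so on, each time losing at most a $p$ (or $\alpha$) factor on the leading term and at most $\mu n^k / (\text{product of the other coordinate set sizes})$ additively. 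Summing these additive errors over all $f-m$ steps and all edges, and choosing $\mu$ small enough in terms of $\gamma$, $p$, $\alpha$, $k$, and $v(F)$, gives the claimed additive slack $\gamma n^{f-m}$. I would also need to subtract off degenerate maps (those that fail injectivity, i.e.\ two vertices of $F$ landing on the same vertex of $H$): the number of such maps is at most $O(n^{f-m-1})$, which is absorbed into the $\gamma n^{f-m}$ error for $n$ large.

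The subtle point — and the step I expect to require the most care — is the bookkeeping of the additive errors when $\mu n^k$ is divided by products of coordinate-set sizes that could themselves be small: if some $Z_j$ is tiny, the bound \eqref{eq:embedlineardense} degrades. The standard fix is to note that we only care about the regime where the final count is a positive fraction of $|Z_{m+1}|\cdots|Z_f|$; if any $|Z_j|$ is smaller than, say, $\mu^{1/2} n$, then both sides of the conclusion are $O(\mu^{1/2} n^{f-m})$ and the inequality holds trivially once $\mu$ is small relative to $\gamma$. So I would split into cases: either all the $|Z_j|$ are at least $\mu^{1/2}n$, in which case the iterated density argument goes through cleanly with controlled errors, or some $|Z_j|$ is small and the bound is vacuous. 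The two non-degeneracy conditions on $\{s_1,\dots,s_m\}$ in the hypothesis are exactly what guarantee that the pinned vertices behave like ordinary independent vertices for the purposes of this edge-by-edge embedding, so no edge ever forces two constraints through a single pinned vertex in a way that would couple the factors $\alpha^{d_F(s_i)}$.
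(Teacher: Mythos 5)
There is a genuine gap in your vertex-by-vertex greedy scheme. The hypotheses \eqref{eq:embedlineardense} and \eqref{eq:mindegatvertex} are \emph{averaged} statements: they bound the number of edges across $k$ sets (respectively, the number of $(k-1)$-tuples in the link of a pinned $y_i$), but they give no pointwise lower bound on the joint neighborhood of a specific tuple. In your scheme, when the last vertex $t_i$ of an edge $E\subseteq\{t_{m+1},\dots,t_f\}$ is placed, the other $k-1$ vertices of $E$ have already been embedded as specific vertices of $H$, and the set of valid $z_i$ is the joint neighborhood of those $k-1$ specific vertices intersected with $Z_i$. A $(p,\mu)$-dense hypergraph can have many $(k-1)$-tuples with empty joint neighborhood (plugging singletons into \eqref{eq:embedlineardense} makes the right-hand side negative, so the condition says nothing), so the claim that ``the set of good choices at step $i$ is a constant fraction of $|Z_i|$'' fails for individual partial embeddings; the same problem occurs for an edge through a pinned $s_i$ once one of its $t$-vertices has been fixed. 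Your case split on whether some $|Z_j|$ is smaller than $\mu^{1/2}n$ does not address this, because the sets that become problematically small are the singletons of already-placed vertices, not the $Z_j$'s.

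The paper's proof avoids this by never asking for a pointwise bound. It inducts on the number of edges of $F$ disjoint from $\{s_1,\dots,s_m\}$, removes all $k$ vertices of such an edge $E$ at once, and for each partial embedding $Q_*$ of the remaining vertices forms candidate sets $S_{m+1}(Q_*),\dots,S_{m+k}(Q_*)$ (linearity guarantees each $S_j$ encodes all constraints from the other edges through $t_j$). The density condition is applied to these $k$ \emph{sets} — an edges-across-$k$-sets count, which is exactly what \eqref{eq:embedlineardense} controls — and the additive error $\mu n^k$ is summed over the at most $n^{f-m-k}$ maps $Q_*$, giving total error $\mu n^{f-m}$. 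The identity $\sum_{Q_*}\prod_j|S_j(Q_*)| = \inj[F_-\to H;\dots]$ then closes the induction without ever lower-bounding any individual $|S_j(Q_*)|$. The base case (every edge meets a pinned vertex, so $F$ is a disjoint union of stars) is where \eqref{eq:mindegatvertex} enters, again as a count over tuples rather than pointwise. If you want to retain a sequential flavor, you must carry the sum over all partial embeddings through each step rather than fixing one and shrinking sets.
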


\begin{proof} 
Kohayakawa, Nagle, R\"odl, and Schacht~\cite{hqsi-kohayakawa10} proved this when $Z_i = V(H)$ for
all $i$, without the distinguished vertices $s_1,\dots,s_m$, and under a stronger condition on $H$,
but it is straightforward to extend their proof to our setup as follows.  The lemma is proved by
induction on number of edges of $F$ which do not contain any vertex from among $s_1, \dots, s_m$.
Let $\mu = (1-p)\gamma$.

First, if every edge of $F$ contains some $s_i$ then $F$ is a vertex disjoint union of stars with centers
$s_1,\dots,s_m$
plus some isolated vertices.  Therefore, we can form a copy of $F$ of the type
we are trying to count by picking an edge of $H$ containing $y_i$ (of the right type) for each edge of
$F$.  More precisely, using \eqref{eq:mindegatvertex}, the fact that all edges of $F$ which use some
$s_1,\dots,s_m$ (so all edges of $F$) do not share any vertices from among $t_{m+1},\dots,t_f$, and
the fact that $F$ is linear,
the number of labeled copies of $F$ with $s_i \rightarrow y_i$ and
$t_j \rightarrow Z_j$ is at least
\begin{align*}
  \alpha^{|F|} |Z_{m+1}| \cdots |Z_f| = \alpha^{\sum d_F(s_i)} p^0 |Z_{m+1}| \cdots |Z_f|.
\end{align*}
The proof of the base case is complete.

Now assume $F$ has at least one edge $E$ which does not contain any $s_i$, with vertices labeled so
that $E = \{t_{m+1},\dots,t_{m+k}\}$.  Let $F_*$ be the hypergraph formed by deleting all vertices
of $E$ from $F$ and notice that $s_i \in V(F_*)$ for all $i$.  Let $F_-$ be the hypergraph formed by
removing the edge $E$ from $F$ but keeping the same vertex set.  Let $Q_*$ be an injective
edge-preserving map $Q_* : V(F_*) \rightarrow V(H)$ where $Q_*(s_i) = y_i$ for $1 \leq i \leq m$ and
$Q_*(t_j) \in Z_j$ for $m+1 \leq j \leq f$. For $m+1 \leq j \leq m+k$, define $S_j(Q_*) \subseteq
Z_j$ as follows.  For each $z \in Z_j$, add $z$ to $S_j(Q_*)$ if $z \notin Im(Q_*)$ and there exists
an edge-preserving injection $V(F_*) \cup \{t_j\} \rightarrow Im(Q_*) \cup \{z\}$ which when
restricted to $V(F_*)$ matches the map $Q_*$.  More informally, $S_j(Q_*)$ consists of all vertices
which can be used to extend $Q_*$ to embed a labeled copy of $F_* \cup \{t_j\}$.

By definition, every edge counted by $e(S_{m+1}(Q_*), \dots, S_{m+k}(Q_*))$ creates a labeled copy of
$F$.  Also, every ordered tuple from $S_{m+1}(Q_*) \times \dots \times S_{m+k}(Q_*)$ creates a labeled copy of
$F_-$.  More precisely,
\begin{align}
  \inj[F \rightarrow H; s_1\rightarrow y_1,\dots,s_m\rightarrow y_m, &t_{m+1}\rightarrow Z_{m+1},
  \dots, t_f\rightarrow Z_f] \nonumber \\ 
  &= \sum_{Q_*} e(S_{m+1}(Q_*), \dots, S_{m+k}(Q_*)) \nonumber \\
  \inj[F_- \rightarrow H; s_1\rightarrow y_1,\dots,s_m\rightarrow y_m, &t_{m+1}\rightarrow Z_{m+1},
  \dots, t_f\rightarrow Z_f] \nonumber \\
  &= \sum_{Q_*} |S_{m+1}(Q_*)| \cdots |S_{m+k}(Q_*)|. \label{eq:countFminus}
\end{align}
For each $j$, $S_j(Q_*) \subseteq Z_j$ so that \eqref{eq:embedlineardense} implies that
\begin{align}
  \inj[F \rightarrow H;s_1\rightarrow y_1,\dots,s_m\rightarrow y_m, &t_{m+1}\rightarrow Z_{m+1},
  \dots, t_f\rightarrow Z_f] \nonumber \\
  &\geq \sum_{Q_*} \left( p |S_{m+1}(Q_*)| \cdots |S_{m+k}(Q_*)| - \mu
  n^{k}\right) \nonumber \\
  &\geq p\sum_{Q_*} |S_{m+1}(Q_*)| \cdots |S_{m+k}(Q_*)| - \mu n^{f-m}, \label{eq:countFbyQsum}
\end{align}
where the last inequality is because there are at most $n^{f-m-k}$ maps $Q_*$, since $F_*$ has $f-k$
vertices and $s_i \in V(F_*)$ must map to $y_i$.  Combining \eqref{eq:countFminus} and
\eqref{eq:countFbyQsum} and then applying induction,
\begin{align*}
  \inj[F \rightarrow H; &s_1\rightarrow y_1,\dots,s_m\rightarrow y_m, t_{m+1}\rightarrow Z_{m+1},
  \dots, t_f\rightarrow Z_f] \\
  &\geq p \inj[F_- \rightarrow H; s_1\rightarrow y_1,\dots,s_m\rightarrow y_m, t_{m+1}\rightarrow
  Z_{m+1}, \dots, t_f\rightarrow Z_f] - \mu
  n^{f-m} \\
  &\geq p \left( \alpha^{\sum d(s_i)} p^{|F|-1-\sum d(s_i)}|Z_{m+1}|\cdots|Z_f|  - \gamma n^{f-m}  \right) - \mu
  n^{f-m}.
\end{align*}
Since $\mu = (1-p)\gamma$, the proof is complete.
\end{proof} 

\begin{cor} \label{cor:embedding}
  Let $0 < p < 1$ and let $F$ be a linear $k$-graph with $V(F) = \{t_1,\dots,t_f\}$.  For every
  $\gamma > 0$, there exists an $n_0$ and $\mu > 0$ such that the following holds.  Let $H$ be an
  \npmu $k$-graph and let $Z_1 \dots, Z_f \subseteq V(H)$.  Then
  \begin{align*}
    \inj[F\rightarrow H; t_1\rightarrow Z_1, \dots, t_f\rightarrow Z_f]
    \geq p^{|F|} |Z_1| \cdots |Z_f| - \gamma n^f.
  \end{align*}
\end{cor}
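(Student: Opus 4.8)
The plan is to derive Corollary~\ref{cor:embedding} as the special case $m = 0$ of Lemma~\ref{lem:manyatvertex}; essentially no new work is needed beyond checking that the hypotheses of that lemma degenerate correctly when there are no distinguished vertices.

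First I would fix an arbitrary value for the parameter that plays no role here, say $\alpha = 1/2$, and apply Lemma~\ref{lem:manyatvertex} with $m = 0$, with the given linear $k$-graph $F$ (whose vertex set is then exactly $\{t_1,\dots,t_f\}$, matching the corollary), and with the given $\gamma$. This produces an $n_0$ and a $\mu > 0$, and I claim these same constants witness the corollary. When $m = 0$ there are no vertices $s_1,\dots,s_m$, so the two structural conditions that Lemma~\ref{lem:manyatvertex} imposes on $F$ (that no edge meets $\{s_1,\dots,s_m\}$ in more than one vertex, and that no two edges each meeting $\{s_1,\dots,s_m\}$ in a single vertex share a $t$-vertex) hold vacuously for every linear $F$, and hypothesis~\eqref{eq:mindegatvertex}, which only concerns edges of $F$ that contain some $s_i$, is likewise vacuous.

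Next I would check hypothesis~\eqref{eq:embedlineardense} for an arbitrary $(n,p,\mu)$ $k$-graph $H$ with $n \geq n_0$ and arbitrary $Z_1,\dots,Z_f \subseteq V(H)$. For any edge $\{t_{i_1},\dots,t_{i_k}\} \in F$ and any subsets $Z'_{i_j} \subseteq Z_{i_j}$, the sets $Z'_{i_1},\dots,Z'_{i_k}$ are subsets of $V(H)$, so the defining inequality of $(p,\mu)$-density applied to them gives $e(Z'_{i_1},\dots,Z'_{i_k}) \geq p\,|Z'_{i_1}|\cdots|Z'_{i_k}| - \mu n^k$, which is exactly~\eqref{eq:embedlineardense}. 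Hence Lemma~\ref{lem:manyatvertex} applies; in its conclusion the product $\alpha^{d_F(s_1)}\cdots\alpha^{d_F(s_m)}$ is the empty product $1$, the exponent $|F| - \sum_i d_F(s_i)$ equals $|F|$, and $n^{f-m} = n^f$, so the conclusion becomes
\[
  \inj[F \rightarrow H; t_1 \rightarrow Z_1, \dots, t_f \rightarrow Z_f] \geq p^{|F|}\,|Z_1|\cdots|Z_f| - \gamma n^f,
\]
which is precisely the statement of Corollary~\ref{cor:embedding}.

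I do not expect any substantive obstacle: the only things requiring attention are the routine bookkeeping of how the $m = 0$ case trivializes~\eqref{eq:mindegatvertex} and the structural hypotheses on $F$, and the (immediate) observation that~\eqref{eq:embedlineardense} over subsets of $V(H)$ is nothing more than the hypothesis that $H$ is $(p,\mu)$-dense. As an alternative one could give a direct self-contained induction on the number of edges of $F$, mirroring the proof of Lemma~\ref{lem:manyatvertex} — peel off one edge, bound the number of ways to extend each partial embedding using $(p,\mu)$-density on the candidate extension sets, and iterate — but quoting the lemma is cleaner and avoids repeating that argument.
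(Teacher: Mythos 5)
Your proposal is correct and is essentially identical to the paper's own proof: the authors likewise apply Lemma~\ref{lem:manyatvertex} with $m=0$, note that \eqref{eq:mindegatvertex} is vacuous, and observe that $(p,\mu)$-density gives \eqref{eq:embedlineardense}. Your write-up simply spells out the bookkeeping (empty products, exponents) in more detail.
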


\begin{proof} 
Apply Lemma~\ref{lem:manyatvertex} with $m = 0$.  Since $H$ is $(p,\mu)$-dense,
\eqref{eq:embedlineardense} holds.  Also, \eqref{eq:mindegatvertex} is vacuous since $m = 0$.
\end{proof} 

\begin{lemma} \label{lem:supersat}
  Let $0 < p < 1$ and let $K_{t_1,\dots,t_k}$ be the complete $k$-partite, $k$-graph with part sizes
  $t_1, \dots, t_k$ and parts labeled by $T_1, \dots, T_k$.  For every $0 < \mu < \frac{p}{2}$,
  there exists $n_0$ and $0 < \xi < 1$  such that the following holds.  Let $H$ be an \npmu
  $k$-graph with $n \geq n_0$.  Then for any $X_1, \dots, X_k \subseteq V(H)$ with $|X_j| \geq
  (2\mu/p)^{1/k} n$ for all $j$, the number of labeled copies of $K_{t_1,\dots,t_k}$ in $H$ with
  $T_i \subseteq X_i$ for all $i$ is at least $\xi \prod |X_i|^{t_i}$.
\end{lemma}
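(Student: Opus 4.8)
The plan is to prove, by induction on $k$, a version of the lemma in which the quasirandomness hypothesis ``$(p,\mu)$-dense'' is weakened to a one-sided transversal density bound: \emph{for every $k\ge 1$, all $t_1,\dots,t_k\ge 1$, every $d>0$, and every $\delta>0$, there exist $\xi'>0$ and $n_0$ such that if $G$ is a $k$-graph on $n\ge n_0$ vertices and $X_1,\dots,X_k\subseteq V(G)$ satisfy $e_G(X_1,\dots,X_k)\ge d\,|X_1|\cdots|X_k|$ with $|X_i|\ge\delta n$ for all $i$, then $G$ has at least $\xi'\prod_i|X_i|^{t_i}$ labeled copies of $K_{t_1,\dots,t_k}$ with $T_i\subseteq X_i$ for every $i$.} This implies Lemma~\ref{lem:supersat}: taking $\delta=(2\mu/p)^{1/k}$ we get $|X_1|\cdots|X_k|\ge(2\mu/p)n^k$, so the $(p,\mu)$-density inequality gives $e(X_1,\dots,X_k)\ge p\prod_i|X_i|-\mu n^k\ge\tfrac p2\prod_i|X_i|$, and the strengthened statement applies with $d=p/2$; the resulting constant depends only on $k,t_1,\dots,t_k,p,\mu$. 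The base cases $k=1$ (trivial: $K_{t_1}$ just picks $t_1$ of the $e_G(X_1)\ge d|X_1|$ edges inside $X_1$) and $k=2$ (the classical K\H ov\'ari--S\'os--Tur\'an supersaturation estimate) are standard.

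For the inductive step, assume the claim for $(k-1)$-graphs and let $G,X_1,\dots,X_k$ be given; write $M=|X_2|\cdots|X_k|$. For a transversal $\vec z=(z_2,\dots,z_k)\in X_2\times\cdots\times X_k$ put $N(\vec z)=\{x\in X_1:\{x,z_2,\dots,z_k\}\in G\}$, so $\sum_{\vec z}|N(\vec z)|=e_G(X_1,\dots,X_k)\ge d|X_1|M$, i.e.\ the average of $|N(\vec z)|$ is at least $d|X_1|$. Using that $x\mapsto\binom{x}{t_1}$ agrees on the nonnegative integers with a convex function, and double-counting pairs $(S,\vec z)$ with $S\in\binom{X_1}{t_1}$ and $S\subseteq N(\vec z)$,
\begin{align*}
  \sum_{S\in\binom{X_1}{t_1}}g(S)=\sum_{\vec z}\binom{|N(\vec z)|}{t_1}\ge M\binom{d\,|X_1|}{t_1}\ge c_1\,M\,|X_1|^{t_1},
\end{align*}
where $g(S)$ is the number of transversals $\vec z\in X_2\times\cdots\times X_k$ with $\{x,z_2,\dots,z_k\}\in G$ for every $x\in S$, and $c_1=c_1(d,t_1)>0$ for $n$ large. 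Since $g(S)\le M$ for each $S$ while the average of $g(S)$ over $S\in\binom{X_1}{t_1}$ is at least $c_1 t_1!\,M=:c_2 M$, Markov's inequality yields $\mathcal S\subseteq\binom{X_1}{t_1}$ with $|\mathcal S|\ge\tfrac{c_2}{2}\binom{|X_1|}{t_1}$ and $g(S)\ge\tfrac{c_2}{2}M$ for all $S\in\mathcal S$.

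Now fix $S\in\mathcal S$ and let $G_S$ be the $(k-1)$-graph on $V(G)\setminus S$ whose edges are the $(k-1)$-sets forming an edge of $G$ with every vertex of $S$; then $e_{G_S}(X_2\setminus S,\dots,X_k\setminus S)=g(S)\ge\tfrac{c_2}{2}M$, and each $|X_i\setminus S|\ge\delta n-t_1\ge\tfrac\delta2 n$ for $n$ large, so $G_S$ has transversal density at least $c_2/2$ between these sets. Applying the induction hypothesis to $G_S$ (uniformity $k-1$, parts $X_i\setminus S$, density $c_2/2$, parameter $\delta/2$) produces at least $\xi''\prod_{i\ge 2}|X_i|^{t_i}$ labeled copies of $K_{t_2,\dots,t_k}$ inside $G_S$ with part $i$ in $X_i\setminus S$; mapping the labels of part $1$ onto $S$ in any of the $t_1!$ ways turns each such copy into a labeled copy of $K_{t_1,\dots,t_k}$ in $G$ with part $1$ inside $X_1$, and copies arising from distinct $S$ are distinct since the part-$1$ image equals $S$. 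Summing over $S\in\mathcal S$ and using $t_1!\binom{|X_1|}{t_1}=\Theta(|X_1|^{t_1})$ gives at least $\xi'\prod_i|X_i|^{t_i}$ labeled copies, completing the induction.

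The main obstacle is conceptual rather than computational: the induction on $k$ cannot be run while keeping the hypothesis ``$(p,\mu)$-dense'', because the common link $G_S$ of a $t_1$-set need not be quasirandom at all (already for $t_1=2$ a union bound controls $G_S$ only between the fixed sets $X_2,\dots,X_k$, not between arbitrary subsets). Weakening the hypothesis to one-sided transversal density removes this difficulty, after which the argument is precisely the K\H ov\'ari--S\'os--Tur\'an / Erd\H os box supersaturation scheme (convexity to convert edge density into many partial patterns, Markov to locate many ``good'' pattern-sets $S$, recursion on the uniformity). The remaining care is routine bookkeeping---checking that non-injective maps contribute an $O(1/n)$ fraction because every $|X_i|=\Omega(n)$, and tracking that the constants $c_1,c_2,\xi'$ depend only on $k$, the $t_i$, and $d$, hence after the reduction only on $k,t_1,\dots,t_k,p,\mu$.
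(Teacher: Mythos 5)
Your proof is correct, but it takes a genuinely different route from the paper's. Both arguments open with the same reduction: since $\prod_i|X_i|\ge(2\mu/p)n^k$, the $(p,\mu)$-density gives $e(X_1,\dots,X_k)\ge p\prod_i|X_i|-\mu n^k\ge\tfrac p2\prod_i|X_i|$, so only a constant lower bound on the transversal edge density survives. From there the paper passes to an auxiliary $k$-partite $k$-graph $H'$ on \emph{disjoint} copies $Y_i$ of the $X_i$, observes $e(H')\ge\frac{\mu}{k^k}v(H')^k$, and invokes the Erd\H os--Simonovits supersaturation theorem (together with $\pi(K_{t_1,\dots,t_k})=0$) as a black box, afterwards discarding the at most $n^{t-1}$ degenerate copies that arise when mapping back to $H$; this is short but relies on the cited supersaturation result. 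You instead prove the needed supersaturation from scratch by induction on the uniformity $k$, in the K\H ov\'ari--S\'os--Tur\'an / Erd\H os box style (convexity of $\binom{x}{t_1}$ to find many $t_1$-sets $S\subseteq X_1$ with large common link, then recursion on the $(k-1)$-uniform link), and your key structural observation --- that the induction must be run under the weaker one-sided transversal-density hypothesis because links of $(p,\mu)$-dense hypergraphs are not themselves quasirandom --- is exactly what makes the recursion close. Your version is self-contained and in fact proves a slightly more general statement (only one-sided density between the fixed sets $X_i$ is used); the paper's is shorter at the cost of an external citation. The one place you are slightly imprecise, $e_{G_S}(X_2\setminus S,\dots,X_k\setminus S)=g(S)$, is only an equality up to the $O(n^{k-2})$ transversals meeting $S$, a lower-order correction that does not affect the argument.
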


\begin{proof} 
Let $H'$ be the $k$-graph on $\sum |X_i|$ vertices with vertex set $Y_1 \dot\cup \cdots \dot\cup
Y_t$ where the sets $Y_i$ are disjoint and $Y_i \cong X_i$ for all $i$.  Note that because the sets
$X_i$ might overlap, a vertex of $H$ might appear more than once in $H'$.  Make $y_1 \in Y_1, \dots,
y_k \in Y_k$ a hyperedge of $H'$ if $y_1,\dots,y_k$ are distinct vertices of $H$ and
$\{y_1,\dots,y_k\} \in H$.  Let $t = \sum t_i$.
Since $H$ is \dense,
\begin{align*}
  e(H') = e_H(X_1,\dots,X_k) \geq p \prod_i |X_i| - \mu n^k \geq p \left( \frac{2\mu}{p} \right) n^k
  -  \mu n^k = \mu n^k \geq \frac{\mu}{k^k} v(H')^k.
\end{align*}
Therefore, by supersaturation (see \cite[Theorems 2.1 and 2.2]{rrl-keevash11}), there exists an
$n'_0$ and $\xi' > 0$ such that if $v(H') \geq n'_0$ then $H'$ contains at least $\xi'
v(H')^{t}$ labeled copies of $K_{t_1,\dots,t_k}$.  Each of these labeled copies of
$K_{t_1,\dots,k_t}$ in $H'$ produces a possibly degenerate labeled copy of $K_{t_1,\dots,t_k}$ in
$H$ where $T_i \subseteq X_i$ for all $i$.  Pick $\xi = \frac{1}{2} \xi'$, $n_0 \geq n'_0
(p/2\mu)^{1/k}$, and $n_0 \geq \frac{1}{\xi} (p/2\mu)^{t/k}$.

Now assume that $n \geq n_0$.  This implies that $v(H') \geq |X_1| \geq (2\mu/p)^{1/k} n \geq
n'_0$ so that there are at least $\xi' v(H')^t$ labeled copies of $K_{t_1,\dots,t_k}$ in $H'$.
Therefore, the number of possibly degenerate labeled copies of $K_{t_1,\dots,t_k}$ in $H$ with $T_i
\subseteq X_i$ for all $i$ is at least
\begin{align} \label{eq:embedksat1}
  \xi' v(H')^t = \xi' \prod_i v(H')^{t_i} \geq \xi' \prod_i |X_i|^{t_i} = 2\xi \prod_i
  |X_i|^{t_i}.
\end{align}
Since there are at most $n^{t-1}$ degenerate labeled copies, by the choice of $n_0$ and since
$|X_i| \geq (2\mu/p)^{1/k} n$ for all $i$, the number of degenerate labeled copies is at most
\begin{align} \label{eq:embedksat2}
  n^{t-1} 
  = \frac{1}{n} \left( \frac{p}{2\mu} \right)^{t/k} \prod_i 
  \left[ \left( \frac{2\mu}{p} \right)^{1/k} n \right]^{t_i}
  \leq \frac{1}{n} \left( \frac{p}{2\mu} \right)^{t/k} \prod_i |X_i|^{t_i}
  \leq \xi \prod_i |X_i|^{t_i}.
\end{align}
Combining \eqref{eq:embedksat1} with \eqref{eq:embedksat2} shows that there are at least $\xi
\prod_i |X_i|^{t_i}$ labeled copies of $K_{t_1,\dots,t_k}$ with $T_i \subseteq X_i$ for all $i$,
completing the proof.
\end{proof} 

With these lemmas in hand, we can prove that if $H$ is \dense and $F$ is linear or
$k$-partite, then $H$ has an $F$-packing covering almost all the vertices of $H$.

\begin{lemma} (Almost Perfect Packing Lemma) \label{lem:greedypacking}
  Fix $0 < p < 1$ and a $k$-graph $F$ with $f$ vertices such that $F$ is either linear or
  $k$-partite. Let $v(F)|b$.  For any $0 < \omega < 1$, there exists $n_0$ and
  $\mu > 0$ such that the following holds.  Let $H$ be an \npmu $k$-graph with $n \geq n_0$ and
  $f|n$.  Then there exists $C \subseteq V(H)$ such that $|C| \leq \omega n$, $b| |C|$, and
  $H[\bar{C}]$ has a perfect $F$-packing.
\end{lemma}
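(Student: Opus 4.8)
The plan is to prove Lemma~\ref{lem:greedypacking} by a greedy procedure: repeatedly embed disjoint copies of $F$ into $H$ until only a small set of vertices remains uncovered, and then do a small amount of clean-up so that the leftover set $C$ has size divisible by $b$. The main tool is the embedding machinery just developed (Corollary~\ref{cor:embedding} in the linear case, Lemma~\ref{lem:supersat} in the $k$-partite case), which guarantees that as long as the set of still-uncovered vertices is not too small, we can always find at least one more copy of $F$ entirely inside it.

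First I would set up parameters. Given $\omega$, choose an auxiliary threshold $\omega' < \omega$ (e.g.\ $\omega' = \omega/2$ or something slightly smaller so that the final rounding is absorbed) and let $\gamma$ or $\mu$ be chosen small enough relative to $p$, $f$, and $\omega'$ that the embedding lemmas apply with room to spare on any vertex subset of size at least $\omega' n$. Concretely, for the linear case I would invoke Corollary~\ref{cor:embedding} with $Z_1 = \dots = Z_f = W$, where $W$ is the current uncovered set: it yields at least $p^{|F|}|W|^f - \gamma n^f$ labeled copies of $F$ inside $W$, which is strictly positive (indeed $\geq 1$) once $|W| \geq \omega' n$ and $\gamma$ is small enough. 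For the $k$-partite case, $F \subseteq K_{t_1,\dots,t_k}$ for suitable $t_i$, and Lemma~\ref{lem:supersat} applied with $X_1 = \dots = X_k = W$ gives $\xi |W|^{\sum t_i}$ labeled copies of the complete $k$-partite host, each of which contains a copy of $F$; again this is positive (and one can extract an \emph{injective} copy, since degenerate copies are a lower-order term) as long as $|W| \geq (2\mu/p)^{1/k} n$, which holds if $\mu$ is chosen small relative to $\omega'$.

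The iteration then runs as follows. Start with $W = V(H)$. While $|W| > \omega' n$, use the appropriate embedding lemma to find a copy of $F$ inside $W$, remove its $f$ vertices from $W$, and repeat. Each step removes exactly $f$ vertices, so the process terminates with a leftover set $W$ satisfying $\omega' n - f < |W| \leq \omega' n$. At this point set $C_0 = W$; then $H[\overline{C_0}]$ already has a perfect $F$-packing. The last step is the divisibility clean-up: we need $b \mid |C|$ with $|C| \le \omega n$. Since $v(F) \mid b$ and $v(F) \mid n$, we also have $v(F) \mid |C_0|$, so $|C_0| = f \cdot m$ for some integer $m$. If $b \nmid |C_0|$, we add back into $C$ at most $b/f - 1 < b/f$ of the already-embedded copies of $F$ (each contributing $f$ vertices, all of which lie outside $C_0$), choosing just enough copies to make the total divisible by $b$; this increases $|C|$ by at most $b - f < b$, which is a constant, so the bound $|C| \le \omega' n + b \le \omega n$ holds for $n$ large (this is why I took $\omega' < \omega$). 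The remaining embedded copies still form a perfect $F$-packing of $H[\overline{C}]$, so $C$ has all the required properties.

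The only genuinely delicate point — and thus the main obstacle — is making sure the embedding lemmas continue to apply uniformly throughout the iteration, i.e.\ that the error terms ($\gamma n^f$ in Corollary~\ref{cor:embedding}, or the degenerate-copy count in Lemma~\ref{lem:supersat}) never swamp the main term on a set of size as small as $\omega' n$. This is handled entirely at the parameter-selection stage: one fixes $\omega'$ first, then asks the embedding lemma for its $n_0$ and $\mu$ (or $\gamma$) as a function of the density $p$, the graph $F$, and the required slack, with $\gamma$ taken so small that $p^{|F|}(\omega' n)^f - \gamma n^f \ge 1$ for all large $n$. Note also that restricting to subsets does not cost anything: $(p,\mu)$-density of $H$ directly controls $e(X_1,\dots,X_k)$ for \emph{all} subsets, which is exactly the hypothesis both embedding lemmas need. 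Everything else in the argument is bookkeeping about counting vertices modulo $f$ and $b$, which is routine.
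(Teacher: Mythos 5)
Your proposal is correct and follows essentially the same route as the paper: greedily remove disjoint copies of $F$ using Corollary~\ref{cor:embedding} (linear case) or Lemma~\ref{lem:supersat} (the $k$-partite case, noting $F$ sits inside the complete $k$-partite host) until the uncovered set is small, then restore $O(1)$ copies of $F$ to fix divisibility by $b$. The only cosmetic difference is that you halt the greedy step at a prescribed threshold $\omega' n$ whereas the paper runs it until no copy of $F$ remains and then bounds the leftover; both are sound.
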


\begin{proof} 
First, select $n_0$ large enough and $\mu$ small enough so that any vertex set $C$ of size
$\left\lceil \frac{\omega}{2} \right\rceil$ contains a copy of $F$.  To see that this is possible,
there are two cases to consider.

If $F$ is linear, let $\gamma = \frac{1}{2}p^{|F|}(\frac{\omega}{2})^f$ and select $n_0$ and $\mu
> 0$ according to Corollary~\ref{cor:embedding}.  Now if $C \subseteq V(H)$ with $|C| \geq \frac{\omega
}{2} n$, then Corollary~\ref{cor:embedding} implies there are at least $p^{|F|}|C|^f - \gamma n^f
\geq p^{|F|} \left( \frac{\omega}{2} \right)^f n^f - \gamma n^f = \gamma n^f > 0$ copies of $F$
inside $C$.

If $F$ is $k$-partite, then Lemma~\ref{lem:supersat} is used in a similar way as follows.  Let $\mu
= \frac{p}{2} \left( \frac{\omega}{2} \right)^k$ and select $n_0$ and $\xi$ according to
Lemma~\ref{lem:supersat}.  Now by the choice of $\mu$, if $|C| \geq \frac{\omega}{2}$ then $|C| \geq
(2\mu/p)^{1/k} n$ so that by Lemma~\ref{lem:supersat}, $C$ contains at least $\xi
(\frac{\omega}{2})^f n^f > 0$ copies of $F$.

Now let $F_1, \dots, F_t$ be a greedily constructed $F$-packing.  That is, $F_1, \dots, F_t$ are
disjoint copies of $F$ and $C := V(H) \setminus V(F_1) \setminus \dots \setminus V(F_t)$ has no copy
of $F$.  By the previous two paragraphs, $|C| \leq \frac{\omega}{2} n$.  Since $f|n$ and
$H[\bar{C}]$ has a perfect $F$-packing, $f| |C|$.  Thus we can let $y \equiv - \frac{|C|}{f} \pmod
{b}$ with $0\leq y < b$ and take $y$ of the copies of $F$ in the $F$-packing of $H[\bar{C}]$ and add
their vertices into $C$ so that $b| |C|$.
\end{proof} 

\subsection{Proof of Proposition~\ref{prop:richimpliespacking}} 
\label{sub:proof:richimpliespacking}

\begin{proof}[Proof of Proposition~\ref{prop:richimpliespacking}] 
First, select $\omega > 0$ according to Lemma~\ref{lem:absorbing} and $\mu_1 > 0$ accoding to
Lemma~\ref{lem:greedypacking}.  Also, make $n_0$ large enough so that both Lemma~\ref{lem:absorbing}
and~\ref{lem:greedypacking} can be applied.  Let $\mu = \mu_1 \omega^{k}$.  All the parameters have
now been chosen.

By Lemma~\ref{lem:absorbing}, there exists a set $A \subseteq V(H)$ such that $A$ $F$-absorbs $C$
for all $C \subseteq V(H) \setminus A$ with $|C| \leq \omega n$ and $b \mid |C|$.
If $|A| \geq (1-\omega)n$, then $A$ $F$-absorbs $V(H)\setminus A$ so that $H$ has a perfect
$F$-packing.  Thus $|A| \leq (1-\omega)n$.  Next, let $H' := H[\bar{A}]$ and notice that $H'$ is
$(p,\mu_1)$-dense since $v(H') \geq \omega n$ and
\begin{align*}
  \mu n^k \leq \frac{\mu}{\omega^k} v(H')^k = \mu_1 v(H')^k.
\end{align*}
Therefore, by Lemma~\ref{lem:greedypacking}, there exists a vertex set $C \subseteq V(H') = V(H)
\setminus A$ such that $|C| \leq \omega n$, $|C|$ is a multiple of $b$, and $H'[\bar{C}]$ has a perfect $F$-packing.
Now Lemma~\ref{lem:absorbing} implies that $A$ $F$-absorbs $C$.  The perfect $F$-packing of $A \cup
C$ and the perfect $F$-packing of $H'[\bar{C}]$ produces a perfect $F$-packing of $H$.
\end{proof} 

\subsection{Proof of Proposition~\ref{prop:richseparableimpliespacking}} 
\label{sub:separable}

This section contains the proof of Proposition~\ref{prop:richseparableimpliespacking}, but first we
need an extension of Lemma~\ref{lem:greedypacking} that produces a perfect $F$-packing covering
almost all the vertices where in addition the unsaturated vertices are $\zeta$-separable.  To do so, we
need a well-known probability lemma.

\begin{lemma} \label{lem:chernoff} \textbf{(Chernoff Bound)}
  Let $0 < p < 1$, let $X_1,\dots,X_n$ be mutually independent indicator random variables with
  $\mathbb{P}[X_i = 1] = p$ for all $i$, and let $X = \sum X_i$.
  Then for all $a > 0$,
  \begin{align*}
    \mathbb{P}[ \left| X - \mathbb{E}[X] \right| > a] \leq 2 e^{-a^2/2n}.
  \end{align*}
\end{lemma}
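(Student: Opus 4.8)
The plan is to prove this by the standard exponential-moment (Bernstein/Chernoff) method; nothing from the earlier sections is needed. First I would reduce to a one-sided estimate: it suffices to show
\begin{align*}
  \mathbb{P}[X - \mathbb{E}[X] > a] \leq e^{-a^2/2n}
  \qquad\text{and}\qquad
  \mathbb{P}[X - \mathbb{E}[X] < -a] \leq e^{-a^2/2n},
\end{align*}
since the claimed bound then follows by the union bound. Moreover, the second inequality is the first applied to the independent indicator variables $1 - X_1, \dots, 1 - X_n$ (which have parameter $1-p$ and whose sum is $n - X$), so it is enough to control the upper tail.

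For the upper tail, fix $t > 0$ and apply Markov's inequality to the nonnegative random variable $e^{t(X - \mathbb{E}[X])}$:
\begin{align*}
  \mathbb{P}[X - \mathbb{E}[X] > a]
  = \mathbb{P}\big[ e^{t(X-\mathbb{E}[X])} > e^{ta} \big]
  \leq e^{-ta}\, \mathbb{E}\big[ e^{t(X - \mathbb{E}[X])} \big].
\end{align*}
By independence, $\mathbb{E}\big[e^{t(X-\mathbb{E}[X])}\big] = \prod_{i=1}^n \mathbb{E}\big[e^{t(X_i - p)}\big]$. Each variable $X_i - p$ has mean zero and takes values in the length-one interval $[-p,\, 1-p]$, so Hoeffding's lemma (convexity of $s \mapsto e^{ts}$ on that interval, followed by a routine one-variable estimate of the resulting function of $t$) gives $\mathbb{E}\big[e^{t(X_i - p)}\big] \leq e^{t^2/8}$. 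Hence $\mathbb{E}\big[e^{t(X-\mathbb{E}[X])}\big] \leq e^{nt^2/8}$, and therefore
\begin{align*}
  \mathbb{P}[X - \mathbb{E}[X] > a] \leq e^{-ta + nt^2/8}.
\end{align*}

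Finally I would optimize the exponent over $t > 0$: the choice $t = 4a/n$ yields exponent $-2a^2/n$, so $\mathbb{P}[X - \mathbb{E}[X] > a] \leq e^{-2a^2/n} \leq e^{-a^2/2n}$, which is more than enough. Combining this with the symmetric lower-tail bound via the union bound gives $\mathbb{P}[|X - \mathbb{E}[X]| > a] \leq 2 e^{-2a^2/n} \leq 2 e^{-a^2/2n}$, as required. There is essentially no obstacle here; the only mildly technical ingredient is Hoeffding's lemma, which is completely standard and could equally well simply be cited, while the rest is Markov's inequality, independence, and a one-line optimization.
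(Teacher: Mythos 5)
Your proof is correct. The paper itself offers no proof of this lemma --- it is stated as a ``well-known probability lemma'' and used as a black box --- so there is no argument of the authors' to compare against. Your derivation is the standard one: symmetrize to the upper tail, apply Markov's inequality to $e^{t(X-\mathbb{E}[X])}$, bound the moment generating function factorwise via Hoeffding's lemma, and optimize in $t$. The computation checks out ($t=4a/n$ gives exponent $-2a^2/n$), and in fact you establish the stronger bound $2e^{-2a^2/n}$, of which the stated $2e^{-a^2/(2n)}$ is a weakening; this is harmless, since the paper only ever uses the bound to get exponentially small failure probabilities. Citing Hoeffding's lemma rather than reproving it is entirely appropriate here.
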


\begin{lemma} \label{lem:seppacking}
  Fix $p,\alpha \in (0,1)$, $\zeta = \min\{\frac{p}{4},\frac{\alpha}{4}\}$, and a $3$-graph $F$ such
  that either $F$ is linear or $F$ is $3$-partite. Let $v(F)|b$ where in addition $b$ is even.  For
  any $0 < \omega < 1$, there exists $n_0$ and $\mu > 0$ such that the following holds.  Let $H$ be
  an \npmua $3$-graph with $n \geq n_0$ and $v(F)|n$.  Then there exists a set $C \subseteq V(H)$
  such that $|C| \leq \omega n$, $C$ partitions into sets of $\mathcal{B}_{\zeta,b}$, and
  $H[\bar{C}]$ has a perfect $F$-packing.
\end{lemma}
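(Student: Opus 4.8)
The plan is to follow the same greedy strategy as in Lemma~\ref{lem:greedypacking}, producing an almost-perfect $F$-packing and then fixing up the leftover set $C$ to make it partition into $\zeta$-separable blocks of size $b$. First I would apply Lemma~\ref{lem:greedypacking} (with a suitable smaller value $\omega'$ in place of $\omega$, and with $b$ replaced by $1$, say, so there is no divisibility constraint) to obtain disjoint copies $F_1,\dots,F_t$ of $F$ with leftover set $C_0 := V(H)\setminus\bigcup V(F_i)$ of size at most $\omega' n/2$; since $v(F)\mid n$ we get $v(F)\mid |C_0|$. The goal is now to enlarge $C_0$ to a set $C$ of size at most $\omega n$ with $b\mid |C|$ such that $C$ can be partitioned into pairs each of codegree $\geq \zeta n$, after grouping $b/2$ pairs per block of $\mathcal{B}_{\zeta,b}$. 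Since $H$ is an \npmua $3$-graph, its minimum degree is at least $\alpha\binom{n}{2}$, and $\zeta = \min\{p/4,\alpha/4\}$; the density and minimum-degree hypotheses should let us pair up almost any small vertex set into high-codegree pairs.

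The key step is the pairing claim: if $W\subseteq V(H)$ has even size $|W| \le \omega n$, then (after possibly swapping a bounded number of vertices of $W$ with vertices outside) $W$ can be partitioned into pairs each with codegree at least $\zeta n$. To prove this I would use a greedy argument combined with the \dense property: the number of pairs $\{x,y\}\subseteq V(H)$ with $d_H(\{x,y\}) < \zeta n$ is small. Indeed, if $B$ is the set of such "bad" pairs, then summing codegrees over $B$ gives a small contribution, while $e(H) \geq p\binom{n}{3} - \mu n^3/6 \ge (p/2)\binom n3$ forces most pairs to have codegree $\Omega(pn)$; more carefully, the number of vertices $x$ that lie in more than, say, $\sqrt\mu\, n$ bad pairs is at most $O(\sqrt\mu\, n)$, using the $(p,\mu)$-density applied to $X_1 = X_2 = X_3 = V(H)$ restricted appropriately, together with $\delta(H)\ge\alpha\binom n2$ and the fact that a vertex in many low-codegree pairs would violate either the density bound or the degree bound. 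So all but $O(\sqrt\mu\,n)$ vertices are "good," each lying in high-codegree pairs with all but $\sqrt\mu\,n$ other vertices; these good vertices can be greedily matched into high-codegree pairs as long as $\sqrt\mu$ is small compared to $|W|/n$. The bounded number of exceptional vertices in $W$ can be evicted from $W$ and replaced by good vertices from outside $W$ (there are plenty since $|W|\le\omega n < n - O(\sqrt\mu n)$), changing $|W|$ by $0$ and keeping $|W|$ even.

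With the pairing claim in hand I would finish as follows. Apply it to $C_0$ after first adjusting its size: choose $y \equiv -|C_0|/v(F) \pmod{b/\gcd(b,v(F))}$ appropriately and absorb $y$ whole copies $F_i$ into the leftover so that the enlarged set $C_1$ satisfies $b \mid |C_1|$ and $|C_1| \le \omega n$ (this uses $b$ even and $v(F)\mid b$, so arithmetic works out just as in Lemma~\ref{lem:greedypacking}); then pair up $C_1$ into codegree-$\ge\zeta n$ pairs, swapping at most $O(\sqrt\mu\,n)$ vertices with good vertices from $V(H)\setminus C_1$ — but note the swapped-out vertices must themselves be re-absorbed into copies of $F$, which I handle by observing the swap only changes $C_1$ to another set $C$ of the same size that is a union of $F_i$-vertices plus the pairing target, so I instead perform the swaps \emph{before} fixing the $F$-packing: redo the greedy packing on $V(H)$ with the $O(\sqrt\mu\,n)$ exceptional vertices forced into $C_0$ from the start. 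Finally, group the pairs of $C$ into blocks of $b/2$ pairs each; each block lies in $\mathcal{B}_{\zeta,b}$ by definition, so $C$ partitions into sets of $\mathcal{B}_{\zeta,b}$ and $H[\bar C]$ has a perfect $F$-packing, as desired. The main obstacle is making the swap-and-repack bookkeeping clean: one must ensure the finitely many "bad" vertices never obstruct the greedy $F$-packing and are cleanly quarantined into $C$, which I would arrange by first deleting them, packing what remains, and then returning them to $C$ before the pairing step.
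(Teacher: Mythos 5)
Your opening step (greedy almost-perfect packing via Lemma~\ref{lem:greedypacking}) matches the paper, but the heart of your argument --- the ``pairing claim'' --- rests on a false premise. You assert that all but $O(\sqrt{\mu}\,n)$ vertices lie in at most $\sqrt{\mu}\,n$ pairs of codegree below $\zeta n$. Neither $(p,\mu)$-density nor $\delta(H)\geq\alpha\binom{n}{2}$ gives anything close to this. The density condition only lower-bounds $e(X_1,X_2,X_3)$ for \emph{vertex} sets $X_i$; it says nothing about an unstructured collection of low-codegree pairs. Concretely, let $H$ be the $3$-graph of triangles of a quasirandom graph of density $1/2$: then $H$ is $(\tfrac18,\mu)$-dense with $\delta(H)\approx\tfrac18\binom{n}{2}$, yet half of all pairs have codegree $0$ and \emph{every} vertex lies in about $n/2$ bad pairs. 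In general the hypotheses only yield that each vertex lies in at least roughly $\tfrac{\alpha}{2}n$ good pairs (this is the computation $2d_H(x)\leq g\cdot n+(n-g)\zeta n$), and that between any two linear-sized vertex sets not all pairs can be bad. Because of this, the leftover set $C_0$ of the greedy packing may induce no perfect matching of good pairs whatsoever --- every vertex of $C_0$ could have all of its good-pair neighbours outside $C_0$ --- and evicting a ``bounded number'' of exceptional vertices cannot repair a defect that may afflict all of $C_0$.

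The missing idea, which is how the paper proceeds, is to enlarge the leftover not by $O(1)$ copies of $F$ (as you do for divisibility) but by a \emph{constant fraction} of the packed copies, chosen at random: add each $V(F_i)$ to $C$ independently with probability $\phi$. Since every vertex has at least $\tfrac{\alpha}{4}n$ good neighbours among the packed vertices, a Chernoff bound guarantees that in the auxiliary graph $G$ of good pairs, every vertex of $V(H)$ has at least $\tfrac{\alpha\phi}{8f}n$ neighbours inside $C$, while $|C|\leq\omega n$ still holds. One then proves that $G[C]$ has a perfect matching via Hall's theorem on a balanced bipartition of $C$: a violating set $T$ would produce two subsets of $C$, each of size at least $\tfrac{\alpha\phi}{16f}n$, with every pair between them of codegree below $\zeta n$, and this contradicts $(p,\mu)$-density since $\zeta<p/2$. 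Your proposal never establishes linear minimum degree of the good-pair graph \emph{inside} $C$, and without it neither a greedy matching nor Hall's condition can be verified.
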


\begin{proof} 
Use Lemma~\ref{lem:greedypacking} to select $n_0$ and $\mu_1 > 0$ to produce an $F$-packing $F_1,
\dots, F_t$ where $W := V(H) \setminus V(F_1)\setminus\dots\setminus V(F_t)$ is such that $|W| \leq
\frac{\omega \alpha}{4} n$.  Let $f = v(F)$ and let
\begin{align*}
  \phi &= \min \left\{ \frac{\omega}{8}, \frac{\alpha}{4}, \frac{p}{4}\right\}, \\
  \mu &= \min \left\{ \frac{p}{2} \left(\frac{\alpha \phi}{16f}\right)^2, \mu_1 \right\}.
\end{align*}
First, form a vertex set $C'$ by starting with $W$ and for each $1 \leq i \leq t$, add $V(F_i)$ to
$C'$ with probability $\phi$ independently.  After this, take $\frac{b}{f} - \frac{|C'|}{f} \pmod
{\frac{b}{f}}$ of the unselected copies of $F$ and add their vertices into $C'$ to form the vertex
set $C$.

By construction, $H[\bar{C}]$ has a perfect $F$-packing (the copies of $F$ which were not selected)
and $b| |C|$.  Since $b$ is even, $|C|$ is also even.  So to complete the proof,
we just need to show that with positive probability, $C$ is $\zeta$-separable and $|C| \leq \omega
n$.  (Note that if $C$ is $\zeta$-separable then it can be partitioned into sets from
$\mathcal{B}_{\zeta,b}$.)

Let $G$ be the graph where $V(G) = V(H)$ and for every $Z \in \binom{V(G)}{2}$, $Z$
is an edge of $G$ if $d_H(Z) \geq \zeta n$, i.e.\ the codegree of $Z$ in $H$ is at least $\zeta
n$.  We will now prove that with positive probability, the following two events occur:
\begin{itemize}
  \item $|C| \leq \frac{1}{2} \omega n$,
  \item $\delta(G[C]) \geq \frac{\alpha \phi}{8f} n$.
\end{itemize}
First, the expected number of vertices added to $W$ to form $C$ is $\phi ft \leq \frac{\omega}{8} n$
plus potentially a few copies of $F$ to make $b| |C|$.  By the second moment method, with
probability at least $\frac{1}{4}$, at most $\frac{\omega}{4}n$ vertices are added to $W$ so that
$|C| \leq \frac{1}{2} \omega n$.  Secondly, since $\delta(H) \geq \alpha n^2$ it is the case that
$\delta(G) \geq \frac{\alpha}{2} n$.  Indeed, if there was some vertex $x$ with $d_G(x) <
\frac{\alpha}{2} n$, then $d_H(x) \leq |N_G(x)| \cdot n + n \cdot \zeta n < (\frac{\alpha}{2} +
\zeta) n^2$, a contradiction to the fact that $\delta(H) \geq \alpha n^2$ and $\zeta \leq
\frac{\alpha}{2}$.  Since $|W| \leq \frac{\omega \alpha}{4}n$, we have that any vertex $x$ has at
least $\frac{\alpha}{2}n - \frac{\omega \alpha}{4}n > \frac{\alpha}{4}n$ neighbors in $G$ outside
$W$.  Since each $F_i$ has size $f$, the vertex $x$ therefore has a neighbor in $G$ inside at least
$\frac{\alpha}{4f}n$ of the copies of $F$.  Therefore, the expected size of $\{ y \in C: xy \in
E(G)\}$ is at least $\frac{\alpha \phi}{4f} n$ and by Chernoff's Inequality
(Lemma~\ref{lem:chernoff}),
\begin{align*}
  \mathbb{P}\left[ \left| \{y \in C : xy\in E(G)\} \right| < \frac{\alpha \phi}{8f}n\right] \leq e^{-c n}
\end{align*}
for some constant $c$.  Thus $n_0$ can be selected large enough so that with probability at most
$\frac{1}{4}$, there is some $x \in V(G)$ such that $\left| \{y \in C: xy \in E(G)\} \right| <
\frac{\alpha \phi}{8f}n$.  This implies that with probability at least $\frac{1}{2}$, $|C| \leq
\omega n$ and $\delta(G[C]) \geq \frac{\alpha \phi}{8f}n$.

To complete the proof, we will show that $\delta(G[C]) \geq \frac{\alpha \phi}{8f}n$ implies that
$G[C]$ has a perfect matching (which is equivalent to $C$ being $\zeta$-separable).  Divide $C$ into
two equal sized parts $C_1$ and $C_2$ (recall that $|C|$ is even since $b$ is even and $b| |C|$.
Such a partition exists since a random partition has this property with positive probability.
Assume towards a contradiction that Hall's Condition fails in $G[C_1,C_2]$, i.e.\ there exists a set
$T \subseteq C_1$ such that $|N_G(T) \cap C_2| < |T|$.  In a slight abuse of notation, let $\bar{T}
= C_1 \setminus T$.  Now $|T| \geq \frac{\alpha \phi}{16f} n$ since $\delta(G[C_1,C_2]) \geq
\frac{\alpha \phi}{16f}n$.  Similarly, $|\bar{T}| \geq \frac{\alpha \phi}{16f}n$ since if $z \in C_2
\setminus N_G(T)$ then $N_G(z) \cap C_1 \subseteq \bar{T}$.  This implies that
\begin{align*}
  |C_2 \setminus N_G(T)| = |C_2| - |N_G(T)| = |C_1| - |N_G(T)| > |C_1| - |T| = |\bar{T}| \geq \frac{\alpha
  \phi n}{16f}.
\end{align*}
Since there are no edges of $G$ between $T$ and $C_2 \setminus N_G(T)$,
\begin{align} \label{eq:halls-eq1}
  e_H(T, C_2 \setminus N_G(T), V(H)) \leq |T| \cdot |C_2 \setminus N_G(T)| \cdot \zeta n = \zeta |T|
  |C_2 \setminus N_G(T)| n.
\end{align}
On the other hand, since $H$ is \dense,
\begin{align*}
  e_H(T, C_2 \setminus N_G(T), V(H)) \geq p |T| |C_2 \setminus N_G(T)| n - \mu n^3.
\end{align*}
Since $|T|$ and $|C_2 \setminus N_G(T)|$ are both larger than $\frac{\alpha \phi}{16f} n$,
\begin{align*}
  e_H(T, C_2\setminus N_G(T), V(H)) \geq \left(p - \mu \left(\frac{16f}{\alpha \phi}\right)^2 \right) 
  |T| |C_2 \setminus N_G(T)| n
\end{align*}
Since $\mu \leq \frac{p}{2} (\frac{\alpha \phi}{16f})^2$, we have
\begin{align} \label{eq:halls-eq2}
  e_H(T, C_2\setminus N_G(T), V(H)) \geq \frac{p}{2} |T| |C_2 \setminus N_G(T)| n
\end{align}
Since $\zeta < \frac{p}{2}$, \eqref{eq:halls-eq2} contradicts \eqref{eq:halls-eq1}.  Therefore,
$G[C_1,C_2]$ satisfies Hall's condition so that $G[C]$ has a perfect matching, i.e.\ $C$ is
$\zeta$-separable.
\end{proof} 

\begin{proof}[Proof of Proposition~\ref{prop:richseparableimpliespacking}] 
The proof is similar to the proof of Proposition~\ref{prop:richimpliespacking} except
Lemma~\ref{lem:seppacking} is used instead of Lemma~\ref{lem:greedypacking}.
\end{proof} 

\section{Rich hypergraphs} 
\label{sec:existance}

This section contains the proofs of Theorems~\ref{thm:linearpacking}
and~\ref{thm:three-unif-packings}.  By the previous section, these proofs come down to showing that
$(p,\mu)$-dense and large minimum degree imply either $(a,b,\epsilon,F)$-rich or
$(a,\mathcal{B}_{\zeta,b},\epsilon,F)$-rich, where we get to select $a$, $b$, and $\epsilon$ but
$\zeta = \min\{\frac{p}{4}, \frac{\alpha}{4}\}$.  As a warm-up before
Theorem~\ref{thm:linearpacking} (see Section~\ref{sec:linear}), we start with the cherry.

\subsection{Packing Cherries} 
\label{sec:Cherry}

Let $K_{1,1,2}$ be the cherry.

\begin{lemma} \label{lem:cherry-absorb}
  Let $ 0 < p,\alpha < 1$ and let $\zeta = \min\{\frac{p}{4},\frac{\alpha}{4}\}$.  There exists an
  $n_0$, $\epsilon > 0$, and $\mu > 0$ such that  if $H$ is an \npmua $3$-graph with $n \geq n_0$,
  then $H$ is $(4,\mathcal{B}_{\zeta,4},\epsilon,K_{1,1,2})$-rich.
\end{lemma}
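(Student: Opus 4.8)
The plan is to exploit the fact that $v(K_{1,1,2})=4$, so a $4$-set $A$ has a perfect $K_{1,1,2}$-packing exactly when $H[A]$ contains a spanning cherry, and an $8$-set $A\cup B$ with $A\cap B=\emptyset$ has one exactly when it splits into two spanning cherries. Given $B=\{b_1,b_2,b_3,b_4\}\in\mathcal B_{\zeta,4}$ with separating partition $\{b_1,b_2\},\{b_3,b_4\}$, write $N_{12}:=\{x:\{b_1,b_2,x\}\in H\}$ and $N_{34}:=\{x:\{b_3,b_4,x\}\in H\}$, so $|N_{12}|,|N_{34}|\geq\zeta n$. I would look for $4$-sets $A=\{v_1,v_2,w_1,w_2\}$ with $v_1,v_2\in N_{12}$, $w_1,w_2\in N_{34}$, that moreover span a cherry with center pair $\{v_1,v_2\}$ and leaves $w_1,w_2$, i.e.\ $\{v_1,v_2,w_1\},\{v_1,v_2,w_2\}\in H$. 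For such an $A$ disjoint from $B$: $H[A]$ has the spanning cherry just described, and $A\cup B$ decomposes into the cherry on $\{b_1,b_2,v_1,v_2\}$ (center $\{b_1,b_2\}$, leaves $v_1,v_2$, valid since $v_1,v_2\in N_{12}$) together with the cherry on $\{b_3,b_4,w_1,w_2\}$ (valid since $w_1,w_2\in N_{34}$). Hence every such $A$ is a $K_{1,1,2}$-absorbing set for $B$.

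To produce many such $A$, I would apply Lemma~\ref{lem:supersat} with $k=3$, $(t_1,t_2,t_3)=(1,1,2)$, and $X_1=X_2=N_{12}\setminus B$, $X_3=N_{34}\setminus B$; note $|X_i|\geq\zeta n-4$. Choosing $\mu$ small enough that $(2\mu/p)^{1/3}<\zeta/2$ (in particular $\mu<p/2$) and $n_0$ large enough that $\zeta n-4\geq\zeta n/2\geq(2\mu/p)^{1/3}n$, the hypotheses of Lemma~\ref{lem:supersat} hold and yield at least $\xi|X_1||X_2||X_3|^2\geq\xi(\zeta n/2)^4$ labeled copies of $K_{1,1,2}$ whose two singleton parts land in $N_{12}\setminus B$ and whose size-$2$ part lands in $N_{34}\setminus B$. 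Each such labeled copy is an injection, so it yields a $4$-set $A=\{v_1,v_2,w_1,w_2\}$ of distinct vertices with $v_1,v_2\in N_{12}\setminus B$, $w_1,w_2\in N_{34}\setminus B$, $\{v_1,v_2,w_1\},\{v_1,v_2,w_2\}\in H$; in particular $A\cap B=\emptyset$, so by the previous paragraph $A$ $K_{1,1,2}$-absorbs $B$.

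Finally, since each $4$-set arises from at most $4!$ of these labeled copies, the number of \emph{distinct} absorbing $4$-sets for $B$ is at least $\xi(\zeta n/2)^4/24=:\epsilon n^4$, with $\epsilon$ depending only on $p$ and $\alpha$ (through $\zeta$ and the constant $\xi$ from Lemma~\ref{lem:supersat}) and not on the particular $B$; this is exactly $(4,\mathcal B_{\zeta,4},\epsilon,K_{1,1,2})$-richness. I do not expect a serious obstacle; the only point needing care is selecting the combinatorial configuration so that a single application of the supersaturation lemma simultaneously forces (i) a spanning cherry inside $A$ and (ii) membership of all four vertices of $A$ in the prescribed links $N_{12}$ and $N_{34}$ — aligning the size-$2$ part of $K_{1,1,2}$ with $N_{34}$ and the two singleton parts with $N_{12}$ is what makes the two requirements compatible. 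Note also that the minimum-degree part of the $(n,p,\mu,\alpha)$ hypothesis is not used beyond pinning down $\zeta$; only $(p,\mu)$-density and the defining property of $\mathcal B_{\zeta,4}$ enter the argument.
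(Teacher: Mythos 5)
Your proof is correct and takes essentially the same route as the paper's: fix the separable partition, take the two codegree neighborhoods $N_{12}$ and $N_{34}$ of size at least $\zeta n$, apply Lemma~\ref{lem:supersat} to find $\Omega(n^4)$ cherries with center pair in $N_{12}$ and leaves in $N_{34}$, and note that each such cherry together with $B$ splits into the two cherries centered at $\{b_1,b_2\}$ and $\{b_3,b_4\}$. The only cosmetic difference is that the paper splits $N(b_1,b_2)$ into two halves before invoking the supersaturation lemma, whereas you pass the same set twice (which that lemma permits, since it allows overlapping $X_i$); your explicit removal of $B$ and the division by $4!$ are minor tidiness points the paper glosses over.
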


\begin{proof} 
Our main task is to come up with an $\epsilon > 0$ such that for large $n$ and all $B \in
\mathcal{B}_{\zeta,4}$, there are at least $\epsilon n^4$ vertex sets of size four which
$K_{1,1,2}$-absorb $B$; we will define $\epsilon$ and $\mu$ later.

Fix $B = \{b_1,b_2,b_3,b_4\} \in \mathcal{B}_{\zeta,4}$, labeled so that $d_H(b_1,b_2) \geq \zeta n$
and $d_H(b_3,b_4) \geq \zeta n$.  Let $X_1 = N(b_1,b_2) = \{ x : xb_1b_2 \in E(H)\} \subseteq
V(H)$ and $X_2 = N(b_3, b_4)$ and notice that $|X_1|, |X_2| \geq \zeta n$.  Arbitrarily divide
$X_1$ in half and call the two parts $Y_1$ and $Y_2$.  Let $\mu = \frac{p}{2} (\frac{\zeta}{2})^3$.
Since $|Y_1|, |Y_2|, |X_2| \geq \frac{\zeta }{2}n = (2\mu/p)^{1/3} n$, by Lemma~\ref{lem:supersat}
there exists a $\xi > 0$ and $n_0$ such that $H[Y_1, Y_2, X_2]$ contains at least $\xi (\frac{\zeta
n}{2})^4$ copies of $K_{1,1,2}$ with one degree two vertex in each of $Y_1$ and $Y_2$
and the degree one vertices in $X_2$.  The proof is now complete, since each of these cherries
absorbs $B$.  Indeed, let $\epsilon = \xi (\frac{\zeta}{2})^4$ and let $y_1 \in Y_1$, $y_2 \in Y_2$,
and $x_1, x_2 \in X_2$ be such that $y_1y_2x_1, y_1y_2x_2 \in E(H)$.  Then $A =
\{y_1,y_2,x_1,x_2\}$ $K_{1,1,2}$-absorbs $B$ because $b_1b_2y_1, b_1b_2y_2 \in E(H)$ (recall that
$Y_1,Y_2 \subseteq N(b_1,b_2)$) and similarly $b_3b_4x_1, b_3b_4x_2 \in E(H)$.  Since there are at
least $\epsilon n^4$ choices for $y_1, y_2, x_1, x_2$, the proof is complete.
\end{proof} 

\subsection{Packing Cycles} 
\label{sec:cycle}

Throughout this section, let $C_4$ denote the hypergraph $C_4(2+1)$.  This section completes the
proof of Theorem~\ref{thm:three-unif-packings}.

\begin{lemma} \label{lem:cycle-absorb}
  Let $ 0 < p,\alpha < 1$ and let $\zeta = \min\{\frac{p}{4},\frac{\alpha}{4}\}$.  There exists an
  $n_0$, $\epsilon > 0$, and $\mu > 0$ such that  if $H$ is a \npmua $3$-graph with $n \geq n_0$,
  then $H$ is $(18,\mathcal{B}_{\zeta,6},\epsilon,C_4)$-rich.
\end{lemma}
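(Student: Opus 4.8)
The plan is to prove that every $B \in \mathcal{B}_{\zeta,6}(H)$ is $C_4$-absorbed by at least $\epsilon n^{18}$ sets $A \in \binom{V(H)}{18}$, for a suitable $\epsilon$ depending only on $p$ and $\alpha$. Fix such a $B$; by definition it has a partition $B = B^{(1)} \cup B^{(2)} \cup B^{(3)}$ into three pairs, each with codegree at least $\zeta n$ in $H$. Write $B^{(i)} = \{b_1^{(i)}, b_2^{(i)}\}$ and let $X_i = N_H(b_1^{(i)}, b_2^{(i)})$, so $|X_i| \ge \zeta n$. The key combinatorial observation is that $C_4(2+1)$, with vertex set $\{1,2,3,4,a,b\}$ and edges $\{12a,12b,34a,34b\}$, consists of two ``$s$-pairs'' $\{1,2\}$ and $\{3,4\}$ each of which together with $a$ and with $b$ forms an edge. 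So a single copy of $C_4$ on vertices $\{u_1,u_2,u_3,u_4,w_1,w_2\}$ where $\{u_1,u_2\} = B^{(i)}$ ``takes care of'' the pair $B^{(i)}$ using only four new vertices $u_3,u_4,w_1,w_2$, provided $u_3 u_4 w_1, u_3 u_4 w_2 \in H$ and those four new vertices lie appropriately.

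Concretely, I would build the absorbing set $A$ as a disjoint union $A = A_1 \cup A_2 \cup A_3$ of three blocks of six vertices each, one block per pair $B^{(i)}$, so that: (i) $H[A_i]$ contains a copy of $C_4$ (so that $H[A]$ has a perfect $C_4$-packing, using the three copies), and (ii) replacing the internal ``$s$-pair'' of $A_i$'s copy of $C_4$ by $B^{(i)}$ still yields a copy of $C_4$ inside $H[A_i \cup B^{(i)}]$ — this requires that the two ``link vertices'' of $A_i$ form edges with $B^{(i)}$, i.e.\ lie in $X_i$. Since $H[A \cup B]$ then has a perfect $C_4$-packing (six vertices of each $A_i \cup B^{(i)}$ grouped into two copies — wait, $|A_i \cup B^{(i)}| = 8$, not a multiple of $6$), I should instead arrange the blocks to be of size six where $A_i$ itself contains a copy of $C_4$ on all six of its vertices and $A_i \cup B^{(i)}$ (eight vertices) is packed by... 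This forces a slightly larger design: take each block $A_i$ to have $6$ vertices with $H[A_i] \cong C_4$ and simultaneously arrange that $\{A_i : i\}$ can absorb all of $B$ collectively, e.g.\ by having $A_1, A_2, A_3$ share structure so that $A \cup B$ ($18+6 = 24$ vertices) splits into four copies of $C_4$. The cleanest route: in $A_i$ designate two vertices $p_i, q_i$ with $p_i q_i \in$ codegree $\ge \zeta n$; then $H[A_i]$ is a $C_4$ with $s$-pair $\{p_i, q_i\}$ and link vertices in $X_i$, while $A_i \cup B^{(i)}$ with its eight vertices is repacked as two $C_4$'s — but two $C_4$'s need $12$ vertices. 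So the right bookkeeping is to absorb $B^{(i)}$ of size $2$ into a block of size $4$ forming a $C_4$ on $6$ vertices, and pad: use blocks of size $6$ where $4$ vertices do the absorbing (giving a $C_4$ with $B^{(i)}$) and $H$-restricted to those $4$ plus $2$ padding forms a $C_4$ — this is exactly the cherry-style argument of Lemma~\ref{lem:cherry-absorb} scaled up, and $3 \cdot 6 = 18$ is why $a = 18$.

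To produce $\epsilon n^{18}$ such sets, I would invoke the embedding lemmas of Section~\ref{sub:embedding}. Since $C_4(2+1)$ is $3$-partite (parts $\{1,3\}$... actually parts $\{1,2\},\{3,4\},\{a,b\}$ — yes it is $3$-partite), Lemma~\ref{lem:supersat} applies: after halving $X_i$ into $Y_{i,1}, Y_{i,2}$ (each of size $\ge \zeta n/2$) and choosing a third linear-sized set $Z_i$ (a fresh portion of $V(H)$), with $\mu = \frac{p}{2}(\zeta/2)^3$ we get $\xi n^{|C_4|}$ labeled copies of $C_4$ with the two $s$-pair vertices split between $Y_{i,1}$ and $Y_{i,2}$. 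Hmm — I actually want the copy structured so its $s$-pair (the two link vertices) lands in $Y_{i,1}$ and $Y_{i,2} \subseteq X_i$: then swapping in $B^{(i)}$ for those two vertices keeps all four edges. Doing this independently for $i = 1,2,3$ over disjoint vertex portions and multiplying counts gives $\ge \epsilon n^{18}$ absorbing sets $A$ with $\epsilon = (\xi(\zeta/2)^4)^3$ or similar; disjointness of the three portions costs only lower-order terms. Set $\mu$ to the minimum of the $\mu$'s demanded by the three applications of Lemma~\ref{lem:supersat}. \textbf{The main obstacle} I anticipate is purely structural rather than quantitative: verifying that $C_4(2+1)$ admits an ``absorbing gadget'' analogous to the cherry's — i.e.\ that six vertices forming a $C_4$ in $H$ with the right two vertices in $X_i$ genuinely yield perfect $C_4$-packings of both $A_i$ and $A_i \cup B^{(i)}$ with the arithmetic ($6 \mid 18$, $6 \mid 24$) working out. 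Once the gadget is pinned down, the counting is a routine application of Lemma~\ref{lem:supersat} exactly as in Lemma~\ref{lem:cherry-absorb}, and then Proposition~\ref{prop:richseparableimpliespacking} finishes the proof of the $C_4(2+1)$ half of Theorem~\ref{thm:three-unif-packings}.
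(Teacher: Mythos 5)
Your framework is right (separable $6$-sets, the neighborhoods $X_i = N(B^{(i)})$ of size $\geq \zeta n$, supersaturation for the counting, and Proposition~\ref{prop:richseparableimpliespacking} to finish), but the heart of the lemma --- an explicit $18$-vertex gadget $A$ for which \emph{both} $H[A]$ and $H[A \cup B]$ admit perfect $C_4$-packings --- is exactly the part you leave unresolved, and the blockwise designs you sketch do not work. As you yourself notice, a block $A_i$ of six vertices cannot absorb $B^{(i)}$ on its own since $|A_i \cup B^{(i)}| = 8$; and your fallback (``4 vertices do the absorbing, 2 vertices pad'') leaves $6$ padding vertices spread across the three blocks that must form a $C_4$ together, plus it gives no packing of $A$ \emph{alone} into three copies of $C_4$. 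Both of these require cross-block edges, which your per-block applications of Lemma~\ref{lem:supersat} (copies of $C_4$ inside $Y_{i,1}\times Y_{i,2}\times Z_i$) do not provide.

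The paper's construction supplies exactly this missing structure. For each $i$ one first shows, via a counting argument using $(p,\mu)$-density, that the set $R_i$ of pairs $\{r,r'\}$ with $|N(r,r')\cap X_i| \geq \tfrac{1}{10}p\zeta n$ has size $\geq \tfrac{1}{10}p\zeta n^2$; one then picks $\{r_i,r_i'\}\in R_i$, sets $Y_i = N(r_i,r_i')\cap X_i$, and applies Lemma~\ref{lem:supersat} \emph{once, across the three sets} $Y_1,Y_2,Y_3$, to find many copies of $K_{4,4,4}$ with parts $T_i=\{y^i_1,\dots,y^i_4\}\subseteq Y_i$. The $18$-set is $A=\{r_i,r_i',y^i_j\}$. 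Then $H[A]$ packs into three copies of $C_4$ each of which mixes blocks (e.g.\ $r_1,r_1',y^1_1,y^1_2,y^2_3,y^3_3$, using the $K_{4,4,4}$ edges $y^2_3y^3_3y^1_1$ and $y^2_3y^3_3y^1_2$ together with $r_1r_1'y^1_1,\,r_1r_1'y^1_2$), while $H[A\cup B]$ packs into three ``absorbing'' copies $b_i,b_i',r_i,r_i',y^i_1,y^i_2$ (valid because $y^i_j\in X_i\cap N(r_i,r_i')$) plus one copy on the leftovers $y^1_3,y^1_4,y^2_3,y^2_4,y^3_3,y^3_4$ inside the $K_{4,4,4}$. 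Without the auxiliary pairs $r_i,r_i'$, the lower bound on $|R_i|$, and the single cross-block $K_{4,4,4}$, neither packing can be exhibited, so the proposal as written has a genuine gap at its central step.
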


\begin{proof} 
Similar to the proof of Lemma~\ref{lem:cherry-absorb}, our task is to come up with an $\epsilon > 0$
such that for large $n$ and all $B \in \mathcal{B}_{\zeta,6}$, there are at least $\epsilon n^{18}$
vertex sets of size eighteen which $C_4$-absorb $B$; we will define $\epsilon$ and $\mu$ later.

Fix $B = \{b_1,b'_1,b_2,b'_2,b_3,b'_3\} \in \mathcal{B}_{\zeta,6}$ labeled so that $d_H(b_i,b'_i)
\geq \zeta n$ for all $i$.  For $1 \leq i \leq 3$, let $X_i = N(b_i,b'_i)$ and note that $|X_i| \geq
\zeta n$.  Now for each $1 \leq i \leq 3$, define
\begin{align*}
  R_i = \left\{ \{r_1, r_2\} \in \binom{V(H)}{2} : |N(r_1,r_2) \cap X_i| \geq \frac{1}{10} p\zeta
  n\right\}.
\end{align*}
In other words, $R_i$ is the set of pairs with neighborhood in $X_i$ at least one-tenth the
``expected'' size.  If $|R_1| \leq \frac{1}{10} p \zeta n^2$, then
\begin{align}\label{eq:c4edgebound}
  e(X_1,V(H),V(H)) \leq |R_1| n + \left( \binom{n}{2} - |R_1| \right)
  \frac{1}{10} p \zeta n \leq \frac{1}{5} p\zeta n^3.
\end{align}
On the other hand, since $H$ is $(p,\mu)$-dense,
\begin{align*}
  e(X_1,V(H),V(H)) \geq p |X_1|n^2 - \mu n^3 \geq \left( p\zeta - \mu \right) n^3.
\end{align*}
Let $\mu = \frac{p}{2} (\frac{1}{10}p\zeta)^3 < \frac{4}{5} p \zeta$ so that this contradicts
\eqref{eq:c4edgebound}.  Thus $|R_1| \geq \frac{1}{10} p\zeta n^2$ and similarly for $1 \leq i \leq
3$, $|R_i| \geq \frac{1}{10} p\zeta n^2$.

Now fix $r_1r'_1 \in R_1$, $r_2r'_2 \in R_2$, and $r_3r'_3 \in R_3$.  There are at least
$(\frac{1}{10} p\zeta)^3 n^6$ such choices.  For $1 \leq i \leq 3$ let $Y_i = N(r_i,r'_i) \cap X_i$
so $|Y_i| \geq \frac{1}{10} p\zeta n = (\frac{2\mu}{p})^{1/3} n$.  By Lemma~\ref{lem:supersat},
there exists a $\xi > 0$ such that there are at least $\xi (\frac{1}{10}p\zeta)^{12} n^{12}$ copies
of $K_{4,4,4}$ across $Y_1, Y_2, Y_3$.  Let $T_1, T_2, T_3$ be the three parts of $K_{4,4,4}$ with
$T_i \subseteq Y_i$ and let $T_i = \{y^{i}_1, y^{i}_2, y^{i}_3, y^{i}_4\}$.

Let $\epsilon = \xi (\frac{1}{10}p\zeta)^{15}$; we claim that there are at least $\epsilon n^{18}$
vertex sets of size $18$ which $C_4$-absorb $B$.  Indeed, $A := \{r_i,r'_i,y^i_j : 1 \leq i \leq
3, 1 \leq j \leq 4\}$ forms a $C_4$-absorbing $18$-set for $B$ as follows.  First, $A$ has a perfect
$C_4$-packing: one $C_4$ uses vertices $r_1, r'_1, y^1_1, y^1_2, y^2_3, y^3_3$, another uses
vertices $r_2, r'_2, y^2_1, y^2_2, y^3_4, y^1_3$, and the last uses $r_3, r'_3, y^3_1, y^3_2, y^1_4,
y^2_4$.  Secondly, $A \cup B$ has a perfect $C_4$-packing: one $C_4$ using $b_1, b'_1, r_1, r'_1,
y^1_1, y^1_2$, one using $b_2, b'_2, r_2, r'_2, y^2_1, y^2_2$, one using $b_3, b'_3, r_3, r'_3,
y^3_1, y^3_2$, and one using $y^1_3, y^1_4, y^2_3, y^2_4, y^3_3, y^3_4$.  Since there are
$(\frac{1}{10} p\zeta)^3 n^6$ choices for $r_1, r'_1, r_2, r'_2, r_3, r'_3$ and then
$\xi(\frac{1}{10}p\zeta)^{12} n^{12}$ choices for $y^i_j$, there are a total of at least $\epsilon
n^{18}$ choices for $A$.
\end{proof} 

\begin{proof}[Proof of Theorem~\ref{thm:three-unif-packings}] 
  Apply Lemmas~\ref{lem:cherry-absorb} and~\ref{lem:cycle-absorb} and then
  Proposition~\ref{prop:richseparableimpliespacking}.
\end{proof} 

\subsection{Packing Linear Hypergraphs} 
\label{sec:linear}

In this section, we prove Theorem~\ref{thm:linearpacking}.

\begin{lemma} \label{lem:linear-absorb}
  Let $ 0 < p,\alpha < 1$ and let $F$ be a linear $k$-graph on $f$ vertices.  There exists an $n_0$,
  $\epsilon > 0$, and $\mu > 0$ such that if $H$ is a \npmua $k$-graph with $n \geq n_0$, then $H$
  is $(f^2-f,f,\epsilon,F)$-rich.
\end{lemma}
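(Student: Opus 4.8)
The plan is to produce, for each $B \in \binom{V(H)}{f}$, many $(f^2-f)$-element absorbing sets $A$ by exhibiting an explicit structure that both tiles on its own and tiles together with $B$. Write $V(F) = \{t_1,\dots,t_f\}$. The natural construction is to take, for each vertex $b_i \in B$, one ``dedicated'' copy $F^{(i)}$ of $F$ built so that it can be rerouted to swallow $b_i$; since each copy of $F$ uses $f$ vertices and we want the absorbing set $A$ itself disjoint from $B$, using $f-1$ fresh vertices per index gives $|A| = f(f-1) = f^2 - f$. More precisely, I would have $A$ decompose into $f$ copies $F^{(1)},\dots,F^{(f)}$ of $F$, where $F^{(i)}$ shares exactly one vertex with $F^{(i+1)}$ (indices mod $f$), so that $A$ is a ``cyclic chain'' of $f$ copies of $F$ on $f^2-f$ vertices; and I would choose the shared/free vertices so that replacing one vertex of $F^{(i)}$ by $b_i$ (for each $i$) again yields $f$ copies of $F$ covering $A \cup B$. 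Whether such a reconfigurable chain exists for an arbitrary linear $F$ is the delicate combinatorial point, and here I would use that $F$ is linear: pick a vertex $t_1 \in V(F)$; then $F$ minus $t_1$ together with a fresh vertex playing the role of $t_1$ is again a copy of $F$, and linearity guarantees that the ``link neighborhoods'' that $t_1$ must satisfy are on disjoint $(k-1)$-sets, which is exactly the hypothesis needed to invoke Lemma~\ref{lem:manyatvertex}.

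The counting step is then the heart of the argument. Fix $B = \{b_1,\dots,b_f\}$. I would build $A$ greedily/probabilistically: choose the $f$ copies $F^{(1)},\dots,F^{(f)}$ one at a time, at each stage using Corollary~\ref{cor:embedding} (to embed a copy of $F$ whose vertices land in prescribed large sets $Z_j$, of which there are $\geq (1-o(1))n$ choices each) together with Lemma~\ref{lem:manyatvertex} applied with the distinguished vertices $s_i$ set to the $b_i$'s, to ensure that the new copy $F^{(i)}$ is ``$b_i$-absorbing'': i.e., there is a designated vertex $v \in V(F^{(i)})$ such that $F^{(i)} - v + b_i$ is still a copy of $F$ in $H$, and simultaneously the previously chosen vertex shared with $F^{(i-1)}$ behaves correctly. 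The minimum degree hypothesis $\delta(H) \geq \alpha\binom{n}{k-1}$ enters through the $\alpha$-degree condition \eqref{eq:mindegatvertex} in Lemma~\ref{lem:manyatvertex}: it guarantees that each $b_i$ has enough edges into any large set to make the rerouting embeddings plentiful. Multiplying the lower bounds from the $f$ stages — each of which is of the form (const)$\,\cdot n^{\#\text{free vertices at that stage}}$ minus a $\gamma n^{(\cdot)}$ error — gives at least $\epsilon n^{f^2-f}$ choices of $A$, for a suitable $\epsilon = \epsilon(p,\alpha,F) > 0$ and $\mu$ small enough (and $n_0$ large enough) that all error terms $\gamma n^{(\cdot)}$ are absorbed.

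To keep the bookkeeping honest I would first fix the abstract ``blueprint'': an explicit $(f^2-f)$-vertex $k$-graph $A_0$ containing $B$-slots, with a specified perfect $F$-packing of $A_0 \setminus (\text{slots})$ filled by fresh vertices and a specified perfect $F$-packing of $A_0$ using the slots, verify once and for all (using linearity of $F$) that $A_0$ is realizable and that the two packings differ only by the $f$ local swaps $v_i \leftrightarrow b_i$; then the embedding problem reduces to counting labeled copies of a fixed linear $k$-graph (namely $A_0$ with the $b_i$ pinned) in $H$, vertex class by vertex class, which is precisely what Lemma~\ref{lem:manyatvertex} delivers once one checks its non-degeneracy hypotheses (no edge of the blueprint meets the pinned set in more than one vertex, and two edges each meeting the pinned set in one vertex share no free vertex) — and these hold by the linearity of $F$ and by choosing the blueprint so that distinct $b_i$'s lie in distinct copies $F^{(i)}$. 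The main obstacle, and the step I expect to require the most care, is exactly this construction of the reconfigurable blueprint $A_0$ for a general linear $F$ (in particular handling $F$ with few vertices, disconnected $F$, or $F$ with isolated vertices) and verifying that it satisfies the technical hypotheses of Lemma~\ref{lem:manyatvertex}; the rest is a routine product-of-embeddings count with error terms swept into $\mu$ and $n_0$.
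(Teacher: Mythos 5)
Your overall strategy is the same as the paper's: fix an abstract ``blueprint'' hypergraph containing $f$ pinned slots for $B$, observe that every labeled copy of the blueprint with the slots mapped to $B$ yields an absorbing $(f^2-f)$-set, and count such copies with a single application of Lemma~\ref{lem:manyatvertex} (the minimum degree supplying \eqref{eq:mindegatvertex} at the pinned vertices and $(p,\mu)$-density supplying \eqref{eq:embedlineardense}). That counting step, and your verification of the non-degeneracy hypotheses of Lemma~\ref{lem:manyatvertex} via the linearity of $F$, are exactly right.

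The gap is in the blueprint itself. Recall that $A$ $F$-absorbs $B$ only if \emph{both} $H[A]$ and $H[A\cup B]$ have perfect $F$-packings. Your cyclic chain consists of $f$ copies of $F$ in which consecutive copies share a vertex; after the swaps $v_i\leftrightarrow b_i$ these become $f$ pairwise disjoint copies tiling the $f^2$ vertices of $A\cup B$, which is fine. But the chain gives no perfect $F$-packing of $A$ alone: its $f$ copies overlap pairwise, and since $|A|=f(f-1)$ a perfect packing of $A$ must consist of $f-1$ \emph{disjoint} copies of $F$, which your structure never produces. So the sets you count are not shown to be absorbers. The paper's blueprint resolves exactly this: it is an $f\times f$ grid $\{x_{i,j}\}$ in which each of the $f$ columns induces a copy of $F$ (these tile $A\cup B$, with $B$ as row $0$) \emph{and} each of the rows $1,\dots,f-1$ induces a copy of $F$ (these $f-1$ disjoint copies tile $A$). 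The role-compatibility issue you flag (that a slot vertex and its replacement must play the same role in $F$) is handled there by labeling $x_{i,j}$ with $w_{i+j\pmod f}$, which makes every row and every column a genuine labeled copy of $F$ regardless of whether $F$ is connected or has isolated vertices. To repair your proof you would need to augment your blueprint with $f-1$ disjoint copies of $F$ covering $A$ (and re-verify linearity and the hypotheses of Lemma~\ref{lem:manyatvertex} for the enlarged edge set); at that point you would essentially have reconstructed the paper's grid.
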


\begin{proof} 
Let $a = f(f-1)$ and $b = f$.  Similar to the proofs in the previous two sections, our task is to
come up with an $\epsilon > 0$ such that for large $n$ and all $B \in \binom{V(H)}{b}$, there are
at least $\epsilon n^a$ vertex sets of size $a$ which $F$-absorb $B$; we will define $\epsilon$ and
$\mu$ later.   Let $V(F) = \{w_0,\dots,w_{f-1}\}$ and form the following $k$-graph $F'$.  Let
\begin{align*}
  V(F') = \{ x_{i,j} : 0 \leq i, j \leq f-1 \}.
\end{align*}
(We think of the vertices of $F'$ as arranged in a grid with $i$ as the row and $j$ as the column.)
Form the edges of $F'$ as follows: for each fixed $1 \leq i \leq f-1$, let $\{x_{i,0},\dots,x_{i,f-1}\}$
induce a copy of $F$ where $x_{i,j}$ is mapped to $w_{i+j\pmod f}$.  More precisely, if
$\{w_{\ell_1},\dots,w_{\ell_k}\} \in F$, then $\{x_{i,\ell_1-i\pmod f}, \dots, x_{i,\ell_k-i\pmod
f}\} \in F'$.  Similarly, for each fixed $0 \leq j \leq f-1$, let $\{x_{0,j},\dots,x_{f-1,j}\}$
induce a copy of $F$ where $x_{i,j}$ is mapped to $w_{i+j\pmod f}$.  Note that we therefore have
a copy of $F$ in each column and a copy of $F$ in each row besides the zeroth row.

Now fix $B = \{b_0, \dots, b_{f-1}\} \subseteq V(H)$; we want to show that $B$ is $F$-absorbed by many
$a$-sets.  Note that any labeled copy of $F'$ in $H$ which maps $x_{0,0} \rightarrow b_0, \dots,
x_{0,f-1} \rightarrow b_{f-1}$ produces an $F$-absorbing set for $B$ as follows.  Let $Q : V(F')
\rightarrow V(H)$ be an edge-preserving injection where $Q(b_j) = x_{0,j}$ (so $Q$ is a labeled copy
of $F'$ in $H$ where the set $B$ is the zeroth row of $F'$).  Let $A = \{Q(x_{i,j}) : 1 \leq i \leq
f-1, 0 \leq j \leq f-1\}$ consist of all vertices in rows $1$ through $f-1$.  Then $A$ has a perfect
$F$-packing consisting of the copies of $F$ on the rows, and $A \cup B$ has a perfect $F$-packing
consisting of the copies of $F$ on the columns.  Therefore, $A$ $F$-absorbs $B$.

To complete the proof, we therefore just need to use Lemma~\ref{lem:manyatvertex} to show there are
many copies of $F'$ with $B$ as the zeroth row.
Apply Lemma~\ref{lem:manyatvertex} to $F'$ where $m = f$, $s_1 =
x_{0,0},\dots,s_f = x_{0,f-1}$ and $Z_{m+1} = \dots = Z_{f^2} = V(H)$.  Since $\delta(H) \geq \alpha \binom{n}{k-1}$,
\eqref{eq:mindegatvertex} holds (with $\alpha$ replaced by $\frac{\alpha}{k^k}$) and since $H$ is
$(p,\mu)$-dense \eqref{eq:embedlineardense} holds. Let $\gamma =
\frac{1}{2}(\frac{\alpha}{k^k})^{\sum d(x_{0,j})} p^{|F|-\sum d(x_{0,j})}$ and ensure that $n_0$ is large enough
and $\mu$ is small enough apply Lemma~\ref{lem:manyatvertex} to show that
\begin{align*}
  \inj[F' \rightarrow H; x_{0,0} \rightarrow b_0, \dots, x_{0,f-1} \rightarrow b_{f-1}] \geq \gamma
  n^{f^2-f} = \gamma n^a.
\end{align*}
Each labeled copy of $F'$ produces a labeled $F$-absorbing set for $B$, so there are at least
$\frac{\gamma}{a!} n^a$ $F$-absorbing sets for $B$.  The proof is complete by letting $\epsilon =
\frac{\gamma}{a!}$.
\end{proof} 

\begin{proof}[Proof of Theorem~\ref{thm:linearpacking}] 
Apply Lemma~\ref{lem:linear-absorb} and then Proposition~\ref{prop:richimpliespacking}.
\end{proof} 

\section{Avoiding perfect $F$-packings} 
\label{sec:constr}

In this section we prove Theorem~\ref{thm:nopacking} using the following construction.

\begin{constr}
  For $n \in \mathbb{N}$, define a probability distribution $H(n)$ on $3$-uniform, $n$-vertex
  hypergraphs as follows.  Let $G = G^{(2)}(n,\frac{1}{2})$ be the random graph on $n$ vertices.
  Let $X$ and $Y$ be a partition of $V(G)$ where
  \begin{itemize}
    \setlength{\itemsep}{1pt}
    \setlength{\parskip}{0pt}
    \setlength{\parsep}{0pt}
    \item if $n \equiv 0 \pmod 4$, then $|X| = \frac{n}{2} - 1$ and $|Y| = \frac{n}{2} + 1$,
    \item if $n \equiv 1 \pmod 4$, then $|X| = \frac{n}{2} - \frac{1}{2}$ and $|Y| = \frac{n}{2} +
      \frac{1}{2}$,
    \item if $n \equiv 2 \pmod 4$, then $|X| = |Y| = \frac{n}{2}$,
    \item if $n \equiv 3 \pmod 4$, then $|X| = \frac{n}{2} - \frac{1}{2}$ and $|Y| = \frac{n}{2} +
      \frac{1}{2}$.
  \end{itemize}
  Let the vertex set of $H(n)$ be $V(G)$ and make a set $E \in \binom{V(G)}{3}$ into a hyperedge of
  $H(n)$ as follows.  If $|E \cap X|$ is even, then make $E$ into a hyperedge of $H(n)$ if $G[E]$ is
  a clique.  If $|E \cap X|$ is odd, then make $E$ into a hyperedge of $H(n)$ if $E$ is an
  independent set in $G$.
\end{constr}

\begin{lemma} \label{lem:constrdense}
  For every $\epsilon > 0$, with probability going to $1$ as $n$ goes to infinity,
  \begin{align*}
    \left| |E(H(n))| - \frac{1}{8} \binom{n}{3} \right|  \leq \epsilon n^3.
  \end{align*}
\end{lemma}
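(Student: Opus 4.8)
The plan is to compute the expected number of edges of $H(n)$ and then apply a concentration argument. First I would fix a triple $E \in \binom{V(G)}{3}$ and compute $\mathbb{P}[E \in H(n)]$ conditioned on the (deterministic) partition $X \cup Y$. There are two cases depending on the parity of $|E \cap X|$. If $|E \cap X|$ is even, then $E$ is a hyperedge precisely when $G[E]$ is a triangle, which happens with probability $(1/2)^3 = 1/8$. If $|E \cap X|$ is odd, then $E$ is a hyperedge precisely when $G[E]$ is an independent set, again with probability $1/8$. So regardless of parity, $\mathbb{P}[E \in H(n)] = 1/8$, and hence $\mathbb{E}[|E(H(n))|] = \frac{1}{8}\binom{n}{3}$ exactly.

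Next I would establish concentration. The random variable $|E(H(n))|$ is a function of the $\binom{n}{2}$ independent edge-indicator variables of $G$, and changing the status of a single pair $\{u,v\}$ in $G$ can change the hyperedge status of $E$ only for triples $E \supseteq \{u,v\}$, of which there are $n-2$. Thus $|E(H(n))|$ satisfies a bounded-differences (Lipschitz) condition with constant $n-2$ in each of the $\binom{n}{2}$ coordinates. By the Azuma--Hoeffding inequality,
\begin{align*}
  \mathbb{P}\left[ \left| |E(H(n))| - \tfrac{1}{8}\binom{n}{3} \right| > \epsilon n^3 \right]
  \leq 2 \exp\left( - \frac{2 \epsilon^2 n^6}{\binom{n}{2}(n-2)^2} \right)
  \leq 2 \exp\left( - c \epsilon^2 n^2 \right)
\end{align*}
for a suitable constant $c > 0$, and this tends to $0$ as $n \to \infty$. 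This gives the claimed high-probability bound.

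An alternative to the martingale approach, which I might use instead to keep things elementary, is a second-moment computation: bound $\mathrm{Var}(|E(H(n))|)$ by noting that two triples $E, E'$ contribute to the covariance only when $|E \cap E'| \geq 2$ (since disjoint or single-vertex-sharing triples depend on disjoint sets of $G$-edges and are therefore independent), so the covariance sum is $O(n^4)$; then Chebyshev's inequality gives $\mathbb{P}[\,|\,|E(H(n))| - \mathbb{E}|E(H(n))|\,| > \epsilon n^3] = O(n^4 / (\epsilon^2 n^6)) = O(1/(\epsilon^2 n^2)) \to 0$. Either route works; the martingale bound is cleaner and gives exponential rather than polynomial decay.

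I do not expect any genuine obstacle here — the statement is a routine first-moment-plus-concentration fact. The only mildly delicate point is verifying the exact equality $\mathbb{E}[|E(H(n))|] = \frac18\binom n3$ irrespective of the parity split between $X$ and $Y$ and independent of the precise sizes $|X|, |Y|$; but as noted above this falls out immediately because both "clique" and "independent set" on three vertices occur with probability exactly $1/8$ in $G(n,1/2)$. The lemma as stated only asks for $\epsilon n^3$ accuracy with probability $\to 1$, so even a crude concentration estimate suffices.
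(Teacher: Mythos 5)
Your proposal is correct, and the first-moment computation is exactly the paper's: each triple is a clique or an independent set of $G(n,\tfrac12)$ with probability $\tfrac18$ regardless of the parity of $|E\cap X|$, so $\mathbb{E}|E(H(n))|=\tfrac18\binom n3$. For the concentration step the paper simply invokes the second moment method (Chebyshev), which is precisely the ``alternative'' route you sketch at the end: triples sharing at most one vertex depend on disjoint sets of $G$-edges and are independent, so the variance is $O(n^4)$ and Chebyshev gives a failure probability $O(1/(\epsilon^2 n^2))$. Your primary route via the edge-exposure martingale and Azuma--Hoeffding is a genuine (and correct) variant: the Lipschitz constant $n-2$ per $G$-edge gives $\sum c_i^2 = \binom n2 (n-2)^2 = \Theta(n^4)$ and hence an exponentially small failure probability $2e^{-c\epsilon^2 n^2}$, which is stronger than what the paper proves but more than the lemma needs (only probability tending to $1$ is required, and the bound is not reused in a union bound over many events). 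Either argument is acceptable; the second-moment version is the more elementary and is what the paper records.
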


\begin{proof} 
For each $E \in \binom{V(H(n))}{3}$, $E$ is a clique or independent set in $G(n,\frac{1}{2})$ with
probability $\frac{1}{8}$.  Thus the expected number of edges in $H(n)$ is $\frac{1}{8}
\binom{n}{3}$ so the second moment method shows that with probability going to one as $n$ goes to
infinity, $|E(H(n)) - \frac{1}{8} \binom{n}{3}| \leq \epsilon n^3$.  See \cite{prob-alonspencer} or
the proof of Lemma 15 in \cite{hqsi-lenz-poset12} for details about the second moment method.
\end{proof} 

\begin{lemma} \label{lem:constrexpand}
  For every $\epsilon > 0$, with probability going to $1$ as $n$ goes to infinity the following
  holds.  Let $X_1, X_2, X_3 \subseteq V(H(n))$.  Then
  \begin{align*}
    \left| e(X_1,X_2,X_3) - \frac{1}{8} |X_1| |X_2| |X_3| \right| < \epsilon n^3.
  \end{align*}
\end{lemma}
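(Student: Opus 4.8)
The plan is to expose the randomness in the construction and reduce the \emph{uniform} discrepancy bound to a single good event about one random sign matrix. For each pair $\{u,v\}\in\binom{V(H(n))}{2}$ let $\eta_{uv}\in\{-1,1\}$ record (after rescaling) whether $\{u,v\}\in G$, so the $\eta_{uv}$ are i.i.d.\ uniform signs; set $\eta_{uu}=0$. Let $\epsilon_v=-1$ if $v\in X$ and $\epsilon_v=+1$ if $v\in Y$; these are deterministic, and $(-1)^{|\{x_1,x_2,x_3\}\cap X|}=\epsilon_{x_1}\epsilon_{x_2}\epsilon_{x_3}=:\sigma$. For distinct $x_1,x_2,x_3$ the triple $\{x_1,x_2,x_3\}$ is a hyperedge of $H(n)$ exactly when $\eta_{x_1x_2}=\eta_{x_1x_3}=\eta_{x_2x_3}=\sigma$, so $\mathbf 1[\{x_1,x_2,x_3\}\in H(n)]=\tfrac18\prod_{1\le i<j\le 3}(1+\sigma\eta_{x_ix_j})$. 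Expanding this and using $\sigma^2=1$ gives
\begin{align*}
  \mathbf 1[\{x_1,x_2,x_3\}\in H(n)] = \tfrac18\bigl(1+\epsilon_{x_1}\epsilon_{x_2}\epsilon_{x_3}\Sigma_1+\Sigma_2+\epsilon_{x_1}\epsilon_{x_2}\epsilon_{x_3}\Sigma_3\bigr),
\end{align*}
where $\Sigma_1=\eta_{x_1x_2}+\eta_{x_1x_3}+\eta_{x_2x_3}$, $\Sigma_2=\eta_{x_1x_2}\eta_{x_1x_3}+\eta_{x_1x_2}\eta_{x_2x_3}+\eta_{x_1x_3}\eta_{x_2x_3}$, and $\Sigma_3=\eta_{x_1x_2}\eta_{x_1x_3}\eta_{x_2x_3}$.

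Summing over the ordered triples of $X_1\times X_2\times X_3$ (non-distinct ones contribute nothing to $e(X_1,X_2,X_3)$, and replacing the sum of the constant $\tfrac18$ over distinct triples by $\tfrac18|X_1||X_2||X_3|$ costs only $O(n^2)$), I get
\begin{align*}
  e(X_1,X_2,X_3) = \tfrac18|X_1||X_2||X_3| \pm O(n^2) + \tfrac18(T_1+T_2+T_3),
\end{align*}
where $T_i$ is the sum of the $i$-th nontrivial term over distinct ordered triples. Let $M$ be the symmetric $n\times n$ matrix with off-diagonal entries $\eta_{uv}$ and zero diagonal. I will use the standard fact that $\|M\|_{\mathrm{op}}=O(\sqrt n)$ with probability $1-o(1)$; in fact only $\|M\|_{\mathrm{op}}\le n^{3/4}$ (say) is needed, which has an elementary proof via $\mathbb E\,\mathrm{tr}(M^4)=O(n^3)$ and Markov. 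On this event I claim $|T_1|,|T_3|=O(n^2\|M\|_{\mathrm{op}})$ and $|T_2|=O(n^2)+O(n\|M\|_{\mathrm{op}}^2)$ \emph{uniformly over all} $X_1,X_2,X_3$; these are $O(n^{11/4})<\epsilon n^3$ for $n$ large, which finishes the proof. For $T_2$: up to an $O(n^2)$ correction for the coincidence $x_2=x_3$, its $\eta_{x_1x_2}\eta_{x_1x_3}$-part equals $(M\mathbf 1_{X_3})^{\mathsf T}\mathrm{diag}(\mathbf 1_{X_1})(M\mathbf 1_{X_2})\le\|M\mathbf 1_{X_3}\|_2\|M\mathbf 1_{X_2}\|_2\le n\|M\|_{\mathrm{op}}^2$, and the other two parts are symmetric. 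For $T_1$: in its $\eta_{x_1x_2}$-part the sum over $x_3$ factors out a deterministic number $\sum_{x_3\in X_3\setminus\{x_1,x_2\}}\epsilon_{x_3}$ of absolute value $\le n$, leaving $O(n)$ copies of a form $(\epsilon\odot\mathbf 1_A)^{\mathsf T}M(\epsilon\odot\mathbf 1_B)\le n\|M\|_{\mathrm{op}}$.

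The heart of the argument is the cubic term $T_3=\sum_{x_1\in X_1}\epsilon_{x_1}Q(x_1)$, where $Q(x_1)=\sum_{x_2\in X_2,\,x_3\in X_3}\epsilon_{x_2}\epsilon_{x_3}\eta_{x_1x_2}\eta_{x_1x_3}\eta_{x_2x_3}$ (coincidences are killed automatically since $\eta_{uu}=0$). The device is to freeze $x_1$ and set $a_v=\epsilon_v\eta_{x_1v}$, a vector with $\|a\|_\infty\le1$ and $a_{x_1}=0$; then $Q(x_1)=\sum_{x_2\in X_2,\,x_3\in X_3}a_{x_2}a_{x_3}\eta_{x_2x_3}=(a\odot\mathbf 1_{X_2})^{\mathsf T}M(a\odot\mathbf 1_{X_3})$, so $|Q(x_1)|\le\|a\odot\mathbf 1_{X_2}\|_2\|M\|_{\mathrm{op}}\|a\odot\mathbf 1_{X_3}\|_2\le n\|M\|_{\mathrm{op}}$. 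Summing over $x_1\in X_1$ gives $|T_3|\le n^2\|M\|_{\mathrm{op}}$, uniformly over all $X_1,X_2,X_3$.

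I expect this cubic term to be the only real obstacle: a priori it is a cubic form in the $\eta_{uv}$, and the naive route — term-by-term concentration plus a union bound over the $2^{3n}$ triples $(X_1,X_2,X_3)$ — breaks down, because an individual link sum $\sum_{x_2\in X_2}\eta_{x_1x_2}$ is of order $n$ for an adversarial $X_2$ (namely the neighborhood of $x_1$). Absorbing the link of the frozen vertex $x_1$ into the coefficients $a$ and recognizing the remainder as a single bilinear form governed by the one spectral quantity $\|M\|_{\mathrm{op}}$ is what makes the bound hold for all $X_2,X_3$ at once, at the cost only of a trivial outer sum over $x_1$; the linear term $T_1$ and quadratic term $T_2$ succumb to the same inequality, and the remaining estimates (the $O(n^2)$ from coincidences and the spectral-norm bound on $M$) are routine.
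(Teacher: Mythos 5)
Your proof is correct, but it takes a genuinely different route from the paper's. The paper partitions $\binom{V}{3}$ into $n$ Steiner triple systems $S_i$ (triples with $a+b+c\equiv i$), observes that within each $S_i$ the hyperedge indicators depend on disjoint sets of pairs of $G$ and hence are mutually independent, applies Chernoff within each class, and then union-bounds over all $i$ and all $2^{3n}$ choices of $(X_1,X_2,X_3)$; the exponential concentration $e^{-cn^2}$ is exactly what is needed to survive that union bound. You instead expand the hyperedge indicator as $\tfrac18\prod_{i<j}(1+\sigma\eta_{x_ix_j})$ in the $\pm1$ edge variables and bound each of the three nontrivial Fourier levels \emph{deterministically} and \emph{uniformly in $X_1,X_2,X_3$} by the operator norm of the sign matrix $M$, with the key trick of absorbing the link of the frozen vertex $x_1$ into the coefficient vector $a$ so that the cubic term becomes an outer sum of bilinear forms. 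This reduces the entire uniform statement to the single event $\|M\|_{\mathrm{op}}=o(n)$, avoids the exponential union bound altogether, and yields a power-saving error $O(n^2\|M\|_{\mathrm{op}})=O(n^{5/2})$ rather than merely $\epsilon n^3$; the paper's argument is more elementary (Chernoff only) and adapts more directly to higher uniformities, where your method would need a suitable tensor norm. One hair to split: Markov applied to $\mathbb{E}\,\mathrm{tr}(M^4)=O(n^3)$ at threshold exactly $n^3$ gives only an $O(1)$ failure probability, so you should take the threshold $n^{3/4}\log n$ (or use a higher trace power) to get probability $1-o(1)$; this changes nothing downstream since any bound $\|M\|_{\mathrm{op}}=o(n)$ suffices for your estimates.
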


\begin{proof} 
Let $S_1, \dots, S_n$ be Steiner triple systems that partition $\binom{V(H(n))}{3}$.  That is, view
$V(H(n)) \cong \mathbb{Z}_{n-1}$ and for $1 \leq i \leq n$ let $S_{i+1}$ consist of the triples
$\{a,b,c\}$ such that $a + b + c = i \pmod n$.  Each triple in $\binom{V(H(n))}{3}$ appears in
exactly one $S_i$ and two triples from the same $S_i$ share at most one vertex.

Let $1 \leq i \leq n$ and let $X_1, X_2, X_3 \subseteq V(H(n))$.  Let $e_H(X_1,X_2,X_3;S_i)$ be the
number of ordered tuples $(x_1,x_2,x_3) \in X_1 \times X_2 \times X_3$ such that $\{x_1,x_2,x_3\}
\in E(H(n)) \cap S_i$.  Let $e_{K_n}(X_1,X_2,X_3;S_i)$ be the number of ordered tuples
$(x_1,x_2,x_3) \in X_1 \times X_2 \times X_3$ such that $\{x_1,x_2,x_3\} \in S_i$.

The expected value of $e_H(X_1,X_2,X_e;S_i)$ is clearly $\frac{1}{8} e_{K_n}(X_1,X_2,X_3;S_i)$.
If $E_1, E_2 \in S_i$ then since $E_1$ and $E_2$ share at most one vertex the events $E_1
\in E(H(n))$ and $E_2 \in E(H(n))$ are independent.  By Chernoff's Bound (Lemma~\ref{lem:chernoff}),
\begin{align*}
  \mathbb{P}\left[ \left| e_H(X_1,X_2,X_3;S_i) - \frac{1}{8}e_{K_n}(X_1,X_2,X_3;S_i) \right| >
  \epsilon |S_i| \right] < e^{-c n^2}.
\end{align*}
for some constant $c$ since $|S_i| = \frac{1}{n} \binom{n}{3}$ and the number of events is
$e_{K_n}(X_1,X_2,X_3;S_i) < |S_i|$.  By the union bound,
\begin{align*}
  \mathbb{P}\left[ \exists i, \exists X_1,X_2,X_3, \left| e_H(X_1,X_2,X_3;S_i) -
  \frac{1}{8}e_{K_n}(X_1,X_2,X_3;S_i) \right| > \epsilon |S_i| \right] < e^{-\frac{c}{2} n^2}.
\end{align*}
Therefore, with high probability, for all $i$ and all $X_1,X_2,X_3$,
\begin{align}\label{eq:constrexpand}
  \left|  e_H(X_1,X_2,X_3;S_i) - \frac{1}{8} e_{K_n}(X_1,X_2,X_3;S_i) \right| < \epsilon |S_i|.
\end{align}
Summing \eqref{eq:constrexpand} over $i$ completes the proof.
\end{proof} 

\begin{lemma} \label{lem:constrnopacking}
  Let $F$ be a $3$-graph with an even number of vertices such that there exists a partition of the
  vertices of $F$ into pairs such that every pair has a common pair in their links.  Then $H(n)$
  does not have a perfect $F$-packing for any $n$.
\end{lemma}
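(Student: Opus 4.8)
The plan is to read off a parity obstruction from the partition $V(G)=X\cup Y$ in the construction and to show that it is incompatible with the hypothesized ``cherry'' structure on $F$. The argument will use only the sets $X,Y$ (not the random choice of $G$), so it will apply to every outcome.

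The key local observation is the following: \emph{if $\{a,x,y\}$ and $\{b,x,y\}$ are both edges of $H(n)$, then $a$ and $b$ lie on the same side of the partition, i.e.\ $\{a,b\}\subseteq X$ or $\{a,b\}\subseteq Y$.} To prove this, recall the membership rule: a triple $E$ is an edge of $H(n)$ iff $G[E]$ is a clique when $|E\cap X|$ is even, and iff $G[E]$ is an independent set when $|E\cap X|$ is odd. In either case the pair $\{x,y\}$ is monochromatic inside $E$ in $G$, and $\{x,y\}\in E(G)$ precisely when $|E\cap X|$ is even. Applying this to $E=\{a,x,y\}$ and to $E=\{b,x,y\}$, and using that whether $\{x,y\}\in E(G)$ is a fixed property of $G$, we get that $|\{a,x,y\}\cap X|$ and $|\{b,x,y\}\cap X|$ have the same parity, hence $a\in X \iff b\in X$.

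Now suppose for contradiction that $H(n)$ has a perfect $F$-packing into copies $F^{(1)},\dots,F^{(m)}$. By hypothesis each $F^{(j)}$ has a partition of its vertex set into pairs, and for every such pair $\{a,b\}$ there is a pair $\{x,y\}$ of vertices of $F^{(j)}$ that is an edge in both the link of $a$ and the link of $b$, so that $\{a,x,y\},\{b,x,y\}\in E(F^{(j)})\subseteq E(H(n))$ (note $a,b,x,y$ are four distinct vertices). By the local observation, every such pair $\{a,b\}$ is contained in $X$ or in $Y$. Taking the union of these pair-partitions over all $j$ produces a partition of $V(H(n))=V(G)$ into pairs, each of which lies inside $X$ or inside $Y$; in particular both $|X|$ and $|Y|$ are even.

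Finally, this contradicts the construction. A perfect $F$-packing forces $v(F)\mid n$, and since $v(F)$ is even this makes $n$ even, so $n\equiv 0$ or $2\pmod 4$; in both cases the construction gives $|X|$ and $|Y|$ odd (namely $\tfrac n2-1,\tfrac n2+1$, respectively $\tfrac n2,\tfrac n2$). (If $n$ is odd there is trivially no perfect $F$-packing.) This contradiction completes the proof. The crux of the whole argument is the local observation of the second paragraph — once the ``clique-or-independent-set according to the parity of $|E\cap X|$'' rule is seen to force the shared pair $\{x,y\}$ to impose matching parities on $a$ and $b$, everything afterwards is just counting the parities of $|X|$ and $|Y|$.
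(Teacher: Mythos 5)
Your proof is correct and follows essentially the same route as the paper: the same parity observation that two edges of $H(n)$ sharing a pair $\{x,y\}$ force their third vertices to lie on the same side of the partition $X\cup Y$ (because the status of $xy$ in $G$ pins down the parity of $|E\cap X|$), combined with the fact that $|X|$ is odd whenever $n$ is even. The only cosmetic difference is that you phrase the conclusion as ``$|X|$ must be even,'' whereas the paper says each copy of $F$ meets $X$ in an even number of vertices; these are the same counting argument.
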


\begin{proof} 
If $n \nmid v(F)$, then obviously $H(n)$ does not have a perfect $F$-packing.  Therefore assume that
$n|v(F)$ so that $n$ is even.  Let $X$ and $Y$ be the partition of $V(H(n))$ in the definition of
$H(n)$.  Since $n$ is even, by definition both $|X|$ and $|Y|$ are odd.  Let $\{w_1,z_1\},
\{w_2,z_2\}, \dots, \{w_{v(F)/2}, z_{v(F)/2}\}$ be the partition of $V(F)$ into pairs so that
$w_i$ and $z_i$ have a common pair in their link for all $i$.  By construction, if $x \in X$ and $y
\in Y$ then there is no pair of vertices $u,v\in V(H(n))$ such that $xuv, yuv \in E(H(n))$ since the
parities of $\{x,u,v\} \cap X$ and $\{y,u,v\} \cap X$ are different.  This implies that for each
$i$, $w_i$ and $z_i$ must either both appear in $X$ or both appear in $Y$ so that any copy of $F$ in
$H(n)$ uses an even number of vertices in $X$ and an even number of vertices in $Y$.  Since $|X|$ is
odd, $H(n)$ does not have a perfect $F$-packing.
\end{proof} 

\begin{proof}[Proof of Theorem~\ref{thm:nopacking}] 
By Lemmas~\ref{lem:constrdense},~\ref{lem:constrexpand}, and~\ref{lem:constrnopacking}, with high
probability $H(n)$ has the required properties.
\end{proof} 

\section{Perfect Matchings in Sparse Hypergraphs} 
\label{sec:Sparse}

In this section, we prove Theorem~\ref{thm:sparse}.  We follow the same outline as
Section~\ref{sec:existance}.

\begin{lemma} \label{lem:sparse-absorbing}
  Let $k \geq 2$, $c > 0$, and $a,b$ be multiples of $k$.  There exists an $n_0$ depending only on
  $k$, $a$, $b$, and $c$ such that the following holds for all $n \geq n_0$.  Let $H$ be an
  $n$-vertex $k$-graph, let $\mathcal{A} \subseteq \binom{V(H)}{a}$, and let $\mathcal{B} \subseteq
  \binom{V(H)}{b}$.  Suppose that $\ell \geq c n^{a-1/2} \log n$ is an integer such that for every
  $B \in \mathcal{B}$ there are at least $\ell$ sets in $\mathcal{A}$ which edge-absorb $B$.  Then
  there exists set $A \subseteq V(H)$ such that $A$ partitions into sets from $\mathcal{A}$ and $A$
  edge-absorbs any set $C$ satisfying the following conditions: $C \subseteq V(H) \setminus A$, $|C|
  \leq \frac{1}{64} \ell^2 n^{-2a+1}$, and $C$ partitions into sets from $\mathcal{B}$.
\end{lemma}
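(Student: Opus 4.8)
The plan is to use the absorbing technique of R\"odl-Ruci\'nski-Szemer\'edi as adapted by Treglown-Zhao, but carried out probabilistically in a way that tracks the dependence on $\ell$ quantitatively (rather than just qualitatively as in Lemma~\ref{lem:absorbing}). First I would choose a random subfamily $\mathcal{F} \subseteq \mathcal{A}$ by including each $a$-set of $\mathcal{A}$ independently with probability $q$, where $q$ is tuned so that $q n^a$ is of the right order — roughly $q = \Theta(\ell n^{-2a+1/2})$ or similar, balancing against the number of ``bad'' intersecting pairs. The key computations are: (i) the expected size of $\mathcal{F}$ is $q|\mathcal{A}| \le q n^a$; (ii) for each $B \in \mathcal{B}$, the expected number of members of $\mathcal{F}$ that edge-absorb $B$ is at least $q\ell$; and (iii) the expected number of \emph{intersecting pairs} of sets in $\mathcal{F}$ (pairs $A_1, A_2 \in \mathcal{F}$ with $A_1 \cap A_2 \ne \emptyset$) is at most $q^2 \cdot a^2 n^{2a-1}$, since there are at most $a^2 n^{2a-1}$ ordered intersecting pairs of $a$-sets in $V(H)$. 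Applying Markov's inequality to (iii), Chernoff/concentration to (i), and a union bound over $\mathcal{B}$ (which has size at most $n^b$, absorbed by the $\log n$ factor in the hypothesis on $\ell$) to (ii), with positive probability we obtain an $\mathcal{F}$ that is small, still absorbs every $B$ at least $\frac{1}{2}q\ell$ times, and has few intersecting pairs.

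Next I would delete one set from each intersecting pair in $\mathcal{F}$ and also delete any set that absorbs too few $B$'s (there are none by the union bound), obtaining a subfamily $\mathcal{F}'$ consisting of pairwise disjoint $a$-sets from $\mathcal{A}$ such that every $B \in \mathcal{B}$ is edge-absorbed by at least $\frac{1}{2}q\ell - q^2 a^2 n^{2a-1} \ge \frac{1}{4}q\ell$ members of $\mathcal{F}'$ — here the choice of $q$ ensures $q a^2 n^{2a-1} \le \frac{1}{4}\ell$. Let $A = \bigcup \mathcal{F}'$; then $A$ partitions into sets from $\mathcal{A}$ and $|A| \le a|\mathcal{F}'| \le a q n^a$. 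The absorption property is then the standard greedy argument: given $C \subseteq V(H) \setminus A$ with $|C| \le \frac{1}{64}\ell^2 n^{-2a+1}$ that partitions into blocks $C_1, \dots, C_m$ from $\mathcal{B}$ (so $m = |C|/b \le \frac{1}{64 b}\ell^2 n^{-2a+1}$), process the blocks one at a time: for each $C_j$, at least $\frac{1}{4}q\ell$ members of $\mathcal{F}'$ absorb $C_j$, and fewer than $m$ of them have been ``used'' on earlier blocks, so as long as $m < \frac{1}{4}q\ell$ we can pick an unused absorber for $C_j$. Each such member $A_i \in \mathcal{F}'$ we replace in the packing of $A$ by a perfect matching of $H[A_i \cup C_j]$ (which exists since $A_i$ edge-absorbs $C_j$); the untouched members of $\mathcal{F}'$ keep their perfect matchings. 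This yields a perfect matching of $H[A \cup C]$, so $A$ edge-absorbs $C$.

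The main thing to get right is the bookkeeping of the three competing quantities — the size bound $qn^a$, the intersecting-pair count $q^2 a^2 n^{2a-1}$, and the absorption multiplicity $q\ell$ — and verifying that there is a single choice of $q$ making all of $|A|$ small, the leftover multiplicity $\frac{1}{4}q\ell$ comfortably larger than the maximum number of blocks $m \le \frac{1}{64 b}\ell^2 n^{-2a+1}$, and the Markov/Chernoff/union-bound failure probabilities summing to less than $1$. Setting $q$ near $\ell n^{-2a+1/2}$, one gets $\frac{1}{4}q\ell \approx \frac{1}{4}\ell^2 n^{-2a+1/2}$, which exceeds $\frac{1}{64 b}\ell^2 n^{-2a+1}$ for large $n$, and $qn^a \approx \ell n^{-a+1/2}$ is $o(n)$ since $\ell \le \binom{n}{a} \le n^a$ necessarily; the $\log n$ in the hypothesis $\ell \ge c n^{a-1/2}\log n$ is exactly what makes $n^b \cdot \exp(-\Omega(q\ell/ \text{stuff}))$ or the analogous Chernoff bound over $\mathcal{B}$ go to zero. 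I expect the hardest part to be choosing the constants so that all inequalities hold \emph{simultaneously} and depend only on $k, a, b, c$ (and then verifying the final $|C|$ threshold $\frac{1}{64}\ell^2 n^{-2a+1}$ matches what the greedy argument actually delivers); Lemma~\ref{lem:absorbing} will then follow as the special case where $\ell$ is merely $\Omega(n^a)$, which dominates $cn^{a-1/2}\log n$.
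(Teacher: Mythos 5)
Your overall strategy is exactly the paper's: sparsify $\mathcal{A}$ by keeping each member with probability $q$, bound intersecting pairs by Markov, use Chernoff plus a union bound over $\mathcal{B}$ (where the $\log n$ in $\ell\ge cn^{a-1/2}\log n$ is indeed what beats the $\binom{n}{b}$ factor), pass to a pairwise disjoint subfamily, and absorb the blocks of $C$ greedily with distinct absorbers. However, your concrete choice of $q$ is off by a factor of $\sqrt n$, and this breaks the argument rather than just the constants. You set $q\approx \ell n^{-2a+1/2}$, which is \emph{smaller} than the correct value $q=\Theta(\ell n^{-2a+1})$ (note $n^{-2a+1/2}=n^{-1/2}\cdot n^{-2a+1}$). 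Two consequences: (i) the per-$B$ expectation is $q\ell\approx\ell^2 n^{-2a+1/2}$, which at the extremal value $\ell=cn^{a-1/2}\log n$ equals $c^2n^{-1/2}\log^2 n\to 0$, so most $B$ would have \emph{no} absorbers in the random subfamily and Chernoff gives nothing; (ii) your claimed inequality that $\tfrac14 q\ell\approx\tfrac14\ell^2 n^{-2a+1/2}$ ``exceeds'' the number of blocks $m\le\tfrac{1}{64b}\ell^2 n^{-2a+1}$ is backwards --- the right-hand side is larger by a factor of order $\sqrt n$, so the greedy step runs out of absorbers. The paper takes $q=\tfrac18\ell n^{-2a+1}$, for which $q\ell=\tfrac18\ell^2n^{-2a+1}\ge\tfrac{c^2}{8}\log^2 n$ (making the union bound work) and the lemma's threshold $\tfrac{1}{64}\ell^2 n^{-2a+1}$ is exactly $\tfrac18 q\ell$; the threshold in the statement is the tell for the intended $q$.

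A second, smaller gap: after disjointifying you assert that ``the untouched members of $\mathcal{F}'$ keep their perfect matchings,'' but a generic $A\in\mathcal{A}$ need not induce a perfectly matchable subgraph --- that is only guaranteed when $A$ edge-absorbs \emph{something}. You must additionally discard from $\mathcal{F}'$ every set that absorbs no $B$ (this costs nothing, since such sets are never counted among the absorbers of any $B$); the paper does this explicitly when forming $\mathfrak{A}'$. With $q$ corrected and this pruning added, your argument matches the paper's proof.
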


\begin{proof} 
The proof is similar to Treglown-Zhao~\cite[Lemma 5.2]{pp-treglown09} which in turn is similar to
R\"odl-Ruci\'nski-Szemer\'edi~\cite[Fact 2.3]{pp-rodl09}. Let $q = \frac{1}{8} \ell n^{-2a+1}$ and
let $\mathfrak{A} \subseteq \mathcal{A}$ be the family obtained by selecting each element of
$\mathcal{A}$ with probability $q$ independently.  The expected number of intersecting pairs of
elements from $\mathfrak{A}$ is at most $q^2 \binom{n}{a} a \binom{n}{a-1} \leq \frac{1}{16} q\ell$.
By Markov's inequality, with probability at least $\frac{1}{2}$ there are at most $\frac{1}{8}
q\ell$ intersecting pairs of elements from $\mathfrak{A}$. 

Now fix $B \in \mathcal{B}$ and let $\Gamma_B \subseteq \{ A \in \mathcal{A}: A \text{
edge-absorbs } B\}$ be such that $|\Gamma_B| = \ell$. For each $A \in \Gamma_B$, let $X_A$ be the
event that $A \in \mathfrak{A}$.  By Chernoff's Bound (Lemma~\ref{lem:chernoff}),
\begin{align*}
  \mathbb{P}\left[ \Big| |\Gamma_B \cap \mathfrak{A}| - q\ell \Big| > \frac{1}{2}q\ell \right]
  \leq 2e^{-q\ell/6}.
\end{align*}
Using that $\ell \geq c n^{a-1/2} \log n$, we have that $q\ell = \frac{1}{8} \ell^2 n^{-2a+1} \geq
\frac{c^2}{8} \log^2 n$.  By the union bound,
\begin{align*}
  \mathbb{P}\left[ \exists B, \left| \Gamma_B \cap \mathfrak{A} \right| < \frac{1}{2} q\ell \right] \leq
  \binom{n}{b} 2e^{-q\ell/6}
  \leq 2e^{b \log n - c^2 \log^2 n/48} < \frac{1}{2}
\end{align*}
for large $n$.  Thus with probability at least $\frac{1}{2}$, $\mathfrak{A}$ is such that for all $B
\in \mathcal{B}$, there exist at least $\frac{1}{4}q\ell$ $a$-sets in $\mathfrak{A}$ which
edge-absorb $B$.  Also, with probability at least $\frac{1}{2}$ there are at most $\frac{1}{8}
q\ell$ intersecting pairs of elements from $\mathfrak{A}$.

Let $\mathfrak{A}'$ be the subfamily of $\mathfrak{A}$ consisting only of those $a$-sets $A$ where
$A$ is not in any intersecting pair and also there is at least one $B \subseteq V(H)$ (of any size)
such that $A$ edge-absorbs $B$.  Thus by the union bound, with positive probability $\mathfrak{A}'$
is such that for all $B \in \mathcal{B}$, there exist at least $\frac{1}{8}q\ell$ $a$-sets in
$\mathfrak{A}'$ which edge-absorb $B$.  Let $\mathfrak{A}'$ be such a family of $a$-sets and let $A'
= \cup \mathfrak{A}'$.  First, $H[A']$ has a perfect matching.  Indeed, each $A \in \mathfrak{A}'$
edge-absorbs some set so $H[A]$ has a perfect matching, and the sets in $\mathfrak{A}'$ are disjoint
so that these perfect matchings combine to form a perfect matching of $H[A']$.  Second, $A'$
partitions into sets from $\mathcal{A}$ since the sets in $\mathfrak{A}'\subseteq \mathcal{A}$ are
disjoint. Now let $C \subseteq V(H) \setminus A'$ with $|C| \leq \frac{1}{64} \ell^2 n^{-2a+1} =
\frac{1}{8}q\ell$ and $C = B_1 \dot\cup \cdots \dot\cup B_t$ with $B_i \in \mathcal{B}$. Using the
bound on the size of $C$, we have that $t < \frac{1}{8} q\ell$.  Since each $B_i$ is edge-absorbed
by at least $\frac{1}{8}q\ell$ sets in $\mathfrak{A}'$, each $B_i$ can be edge-absorbed by a
different $a$-set in $\mathfrak{A}'$.  Therefore, $H[A' \cup B']$ has a perfect matching so the
proof is complete.
\end{proof} 

\begin{proof}[Proof of Lemma~\ref{lem:absorbing}] 
Let $H'$ be the $v(F)$-uniform hypergraph on the same vertex set as $H$, where $X \in
\binom{V(H)}{v(F)}$ is a hyperedge of $H'$ if $H[X]$ is a copy of $F$.  Let $\ell = \left\lceil
\epsilon n^a \right\rceil$ and notice since $H$ is $(\mathcal{A},\mathcal{B},\epsilon,F)$-rich, for
every $B \in \mathcal{B}$ there are at least $\ell$ sets in $\mathcal{A}$ which edge-absorb $B$ in
$H'$.  Also, since $a,b$ are multiples of $v(F)$ they are multiples of the uniformity of $H'$.
Lastly, for large $n$ we have that $\ell \geq n^{a-1/2}\log n$.  Therefore, applying
Lemma~\ref{lem:sparse-absorbing} (with $c = 1$) to $H'$ shows that there exists a set $A \subseteq
V(H') = V(H)$ such that $A$ partitions into sets from $\mathcal{A}$ and for any $C \subseteq V(H')
\setminus A = V(H)\setminus A$ with $|C| \leq \frac{\epsilon^2}{64} n$ and $C$ partitions into sets
from $\mathcal{B}$, $A$ edge-absorbs $C$ in $H'$.  Because each edge of $H'$ is a copy of $F$, this
implies that $A$ $F$-absorbs $C$ in $H$ so the proof is complete by setting $\omega =
\frac{\epsilon^2}{64}$.
\end{proof} 

Next, similar to the proofs in Sections~\ref{sec:Cherry},~\ref{sec:cycle}, and~\ref{sec:linear}, we
show that a bound on  $\lambda_2(H)$ implies that each $3$-set is edge-absorbed by many $6$-sets.
To do so, we need the hypergraph expander mixing lemma, first proved by Friedman and
Wigderson~\cite{ee-friedman95,ee-friedman95-2} (using a slightly different definition of
$\lambda_2(H)$) and then extended to our definition of $\lambda_2(H)$ in~\cite{hqsi-lenz-quasi12}.

\begin{prop} \label{prop:expandermixing}
  (Hypergraph Expander Mixing Lemma~\cite[Theorem 4]{hqsi-lenz-quasi12}).  Let $H$ be an $n$-vertex
  $k$-graph and let $S_1, \dots, S_k \subseteq V(H)$.  Then
  \begin{align*}
    \left| e(S_1, \dots, S_k) - \frac{k! |E(H)|}{n^k} \prod_{i=1}^k |S_i|  \right|
    \leq \lambda_2(H) \sqrt{|S_1| \cdots |S_k|}.
  \end{align*}
\end{prop}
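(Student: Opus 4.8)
The plan is to transport the classical spectral proof of the graph expander mixing lemma into the multilinear-form language in which $\lambda_1(H)$ and $\lambda_2(H)$ are defined. Let $\mathcal A_H$ be the symmetric $k$-linear form on $(\mathbb R^n)^k$ attached to $H$, namely $\mathcal A_H(x^1,\dots,x^k) = \sum_{(i_1,\dots,i_k)} a_{i_1\cdots i_k}\,x^1_{i_1}\cdots x^k_{i_k}$, where $a_{i_1\cdots i_k}=1$ exactly when $i_1,\dots,i_k$ are distinct and $\{i_1,\dots,i_k\}\in E(H)$. For $S\subseteq V(H)$ let $\chi_S\in\{0,1\}^n$ be its indicator vector and let $\mathbf 1$ be the all-ones vector. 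The first step, which is pure unwinding of definitions, is the identity $e(S_1,\dots,S_k) = \mathcal A_H(\chi_{S_1},\dots,\chi_{S_k})$, together with the elementary facts $\langle\mathbf 1,\chi_{S_j}\rangle = |S_j|$ and $\|\chi_{S_j}\|_2 = \sqrt{|S_j|}$.

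The second step is to peel off the contribution of the top eigenvalue. Set $N := \sum_{(i_1,\dots,i_k)} a_{i_1\cdots i_k} = k!\,|E(H)|$. The rank-one form $(x^1,\dots,x^k)\mapsto \frac{N}{n^k}\langle\mathbf 1,x^1\rangle\cdots\langle\mathbf 1,x^k\rangle$ is exactly the $\lambda_1(H) = \lambda_{1,1+\dots+1}(H)$ component of $\mathcal A_H$ in the sense of Section~3 of~\cite{hqsi-lenz-quasi12}; evaluated at $(\chi_{S_1},\dots,\chi_{S_k})$ it equals $\frac{N}{n^k}\prod_j|S_j| = \frac{k!\,|E(H)|}{n^k}\prod_j|S_j|$, which is precisely the main term in the statement. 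Writing $\widetilde{\mathcal A}_H$ for $\mathcal A_H$ with this rank-one part removed, we thus get
\begin{align*}
  e(S_1,\dots,S_k) - \frac{k!\,|E(H)|}{n^k}\prod_{j=1}^k|S_j| \;=\; \widetilde{\mathcal A}_H(\chi_{S_1},\dots,\chi_{S_k}).
\end{align*}
By the definition of $\lambda_2(H) = \lambda_{2,1+\dots+1}(H)$ in~\cite[Section~3]{hqsi-lenz-quasi12}, the quantity $|\widetilde{\mathcal A}_H(x^1,\dots,x^k)|$ is at most $\lambda_2(H)$ whenever all $x^j$ are unit vectors; applying this with $x^j = \chi_{S_j}/\sqrt{|S_j|}$ and multiplying back through gives $|\widetilde{\mathcal A}_H(\chi_{S_1},\dots,\chi_{S_k})| \le \lambda_2(H)\sqrt{|S_1|\cdots|S_k|}$, which is the assertion.

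I expect the only genuine work to sit at the level of definitions rather than estimates: one must verify that, after specializing the general notion $\lambda_{j,\pi}(H)$ from~\cite[Section~3]{hqsi-lenz-quasi12} to the flat partition $\pi = 1+\dots+1$, the $\lambda_1$-part is exactly the rank-one form above and $\lambda_2(H)$ is the operator norm of the complementary form $\widetilde{\mathcal A}_H$. The subtle point is that $H$ need not be regular, so $\mathbf 1$ need not be an honest eigenvector and the ``rank-one removal'' has to be read off the variational definition rather than off an eigendecomposition — this is the main (and essentially the only) obstacle. If one prefers not to lean on the precise definition, a more hands-on route is to write $\chi_{S_j} = \frac{|S_j|}{n}\mathbf 1 + y^j$ with $y^j\perp\mathbf 1$ and $\|y^j\|_2 \le \sqrt{|S_j|}$, expand $\mathcal A_H(\chi_{S_1},\dots,\chi_{S_k})$ multilinearly over the $2^k$ choices of which coordinate uses $\frac{|S_j|}{n}\mathbf 1$ versus $y^j$, recognize the all-$\mathbf 1$ term as the main term, and bound each of the remaining $2^k-1$ terms by $\lambda_2(H)\sqrt{|S_1|\cdots|S_k|}$ using $\frac{|S_j|}{n}\le\sqrt{|S_j|/n}$; this proves the inequality with an extra factor $2^k-1$, which is harmless for the use in Theorem~\ref{thm:sparse} and is removable by the definitional argument. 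Since the statement is quoted verbatim as~\cite[Theorem~4]{hqsi-lenz-quasi12}, in the present paper this whole argument collapses to a citation.
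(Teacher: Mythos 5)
The paper offers no proof of Proposition~\ref{prop:expandermixing} at all --- it is quoted verbatim as Theorem~4 of~\cite{hqsi-lenz-quasi12} --- and your first, definitional argument is precisely the standard proof given there: subtract the rank-one form $\frac{k!|E(H)|}{n^k}\langle\mathbf 1,x^1\rangle\cdots\langle\mathbf 1,x^k\rangle$, identify $\lambda_2(H)$ as the operator norm of the remainder, and evaluate at the normalized indicator vectors. That argument is correct (modulo the definitional bookkeeping you yourself flag), so this is the same approach as the source the paper relies on; your hands-on fallback with the $2^k-1$ cross terms is unnecessary here and, as you note, would only yield the inequality up to a constant factor anyway.
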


\begin{lemma} \label{lem:sparse-edge-absorb}
  Let $\alpha > 0$.  Let $H$ be a $3$-graph and let $p = 6|E(H)|/n^3$.  Assume $\delta_2(H) \geq
  \alpha p n$ and $\lambda_2(H) \leq \frac{1}{2}\alpha^2 p^{5/2} n^{3/2}$.  Then for every $B
  \subseteq V(H)$ with $|B| = 3$, there are at least $\frac{1}{16} \alpha^4 p^5 n^6$ sets $A
  \subseteq V(H)$ with $|A| = 6$ such that $A$ edge-absorbs $B$.
\end{lemma}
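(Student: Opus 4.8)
The plan is to fix $B = \{b_1, b_2, b_3\}$ and construct $6$-sets $A = \{u_1, u_2, v_1, v_2, w_1, w_2\}$ that edge-absorb $B$, meaning both $H[A]$ and $H[A \cup B]$ have a perfect matching. The natural design is to have $H[A \cup B]$ matched by the three triples $\{b_1, u_1, u_2\}, \{b_2, v_1, v_2\}, \{b_3, w_1, w_2\}$ and to have $H[A]$ matched by two triples, say $\{u_1, v_1, w_1\}$ and $\{u_2, v_2, w_2\}$. So I need to count $6$-tuples $(u_1, u_2, v_1, v_2, w_1, w_2)$ of distinct vertices such that all five of these triples are edges of $H$. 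First I would use the minimum codegree hypothesis $\delta_2(H) \geq \alpha p n$ to produce the ``outer'' structure: for each $b_i$ there are at least $\alpha p n$ vertices $x$ with some edge through $b_i$; more carefully, the link graph of $b_i$ has at least $\frac{1}{2}\alpha p n \cdot n$ edges (each pair in it is counted once), so there are many pairs $\{u_1, u_2\}$ with $\{b_1, u_1, u_2\} \in H$, and similarly for $b_2, b_3$.

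The key step is then to glue these together using the Expander Mixing Lemma (Proposition~\ref{prop:expandermixing}). Let $U = N_H(b_1)$ be the neighborhood (as a set of vertices appearing in some edge with $b_1$), and similarly $V = N_H(b_2)$, $W = N_H(b_3)$; each has size $\Omega(\alpha p n)$, in fact I will want to work with slightly smarter sets. The cleanest route: first pick the pair $\{u_1, u_2\}$ inside the link of $b_1$, the pair $\{v_1, v_2\}$ inside the link of $b_2$, the pair $\{w_1, w_2\}$ inside the link of $b_3$, and then I need $\{u_1, v_1, w_1\} \in H$ and $\{u_2, v_2, w_2\} \in H$. To guarantee many such choices, I would argue that for a typical choice of $u_1 \in U$ (one whose codegree behaves well), the set $V_1 := N_H(u_1) \cap V$ is large — here is where $\lambda_2$ enters: by Proposition~\ref{prop:expandermixing} applied to $(\{u_1\}, V, V(H))$ or more directly by a counting/expander argument, most vertices of $U$ have $|N_H(u_1) \cap V| \gtrsim p |V| \gtrsim \alpha^2 p^2 n$ provided $\lambda_2 \sqrt{|V| n} \ll p|V| n$, which is exactly the regime $\lambda_2 \le \frac12 \alpha^2 p^{5/2} n^{3/2}$ buys us. Then apply the Expander Mixing Lemma once more to the triple of sets to count edges $\{u_1, v_1, w_1\}$ with $v_1 \in V_1$, $w_1 \in W_1$: this gives $\ge \frac{p}{2} |V_1||W_1| n \cdot(\text{fraction})$ many, again using the $\lambda_2$ bound to dominate the error term $\lambda_2\sqrt{|V_1||W_1| n}$. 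Repeating for the second triple $\{u_2, v_2, w_2\}$ and multiplying through the (constantly many) nested choices, one collects at least $\frac{1}{16}\alpha^4 p^5 n^6$ valid $6$-tuples; dividing by the $6!$ orderings and absorbing constants still leaves $\Omega(\alpha^4 p^5 n^6)$ sets $A$, and one checks the exponent of $p$ works out to $5$ (two "free" edge densities $p$ for the two absorbing triples, $p^2$ roughly from the two intersection-density steps, plus the $\alpha$ factors from codegree — the precise bookkeeping gives $p^5$ and $\alpha^4$ as claimed). Throughout, the counts of degenerate tuples (where some of the six vertices coincide, or coincide with a $b_i$) are $O(n^5)$ and hence negligible.

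The main obstacle I anticipate is the bookkeeping of the error terms across the nested applications of the Expander Mixing Lemma: each time I restrict to a neighborhood intersection and reapply the lemma, the relevant set sizes shrink by a factor of $\Theta(p)$, so the "main term" $p \cdot |S_1||S_2||S_3|$ shrinks like $p^{O(1)} n^3$ while the error $\lambda_2 \sqrt{|S_1||S_2||S_3|}$ shrinks only like $\lambda_2 p^{O(1)} n^{3/2}$; I must verify that the chosen threshold $\lambda_2 \le \frac12 \alpha^2 p^{5/2} n^{3/2}$ is strong enough that the main term still dominates at the final (smallest) scale — this is what forces the power $p^{5/2}$ rather than something smaller. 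A secondary technical point is handling the "typical vertex" steps cleanly: I would define, at each level, the bad vertices as those whose neighborhood-intersection is smaller than half its expectation, bound their number by a Markov-type / expander argument, and then do the next-level count only over good vertices; since there are only three levels of nesting this stays manageable. Everything else (the explicit gadget for $A$, the perfect matchings of $H[A]$ and $H[A \cup B]$, discarding degenerate tuples) is routine.
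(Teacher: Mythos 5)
Your absorbing gadget is exactly the paper's: a $6$-set $A$ carrying two disjoint edges (the perfect matching of $H[A]$) together with three ``spoke'' edges through $b_1,b_2,b_3$, each spoke using one vertex from each of the two edges (the perfect matching of $H[A\cup B]$) --- your $(u_i,v_i,w_i)$ is the paper's $(x_i,y_i)$ up to relabeling. The difference is the order in which you count these configurations, and your order creates a genuine gap. The paper first picks one of the two edges of $H[A]$ arbitrarily (at least $\tfrac18\alpha pn^3$ choices of an edge $\{x_1,x_2,x_3\}$ disjoint from $B$), then uses the codegree hypothesis to fix sets $Y_i\subseteq N(b_i,x_i)$ of size exactly $\alpha pn$, and finishes with a \emph{single} application of Proposition~\ref{prop:expandermixing} to $(Y_1,Y_2,Y_3)$: the main term $p|Y_1||Y_2||Y_3|=\alpha^3p^4n^3$ dominates the error $\lambda_2\sqrt{|Y_1||Y_2||Y_3|}\le\tfrac12\alpha^{7/2}p^4n^3$, and the product $\tfrac18\alpha pn^3\cdot\tfrac12\alpha^3p^4n^3$ gives the stated count with no typicality arguments and no nesting.

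Your order --- pick the three link pairs first, then glue via expander mixing at the level of a single ``typical'' vertex --- runs into exactly the obstacle you flag but do not resolve. Proposition~\ref{prop:expandermixing} is only useful when $p\prod|S_i|\ge\lambda_2\sqrt{\prod|S_i|}$, i.e.\ $\prod|S_i|\ge(\lambda_2/p)^2=\tfrac14\alpha^4p^3n^3$; if one $S_i$ is a singleton the product is at most $n^2$, which is far below this threshold in the relevant regime ($p\gtrsim n^{-1/10}$ in the application of the lemma). So your step ``apply Proposition~\ref{prop:expandermixing} to $(\{u_1\},V,V(H))$'' yields nothing, and the claim that most $u_1\in U$ satisfy $|N_H(u_1)\cap V|\gtrsim p|V|$ ``is exactly the regime the $\lambda_2$ bound buys us'' is false as stated. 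Such typicality statements can only be extracted by a defect form of the mixing lemma (bounding the bad set $U'$ via $\tfrac12p|U'||V||W|\le\lambda_2\sqrt{|U'||V||W|}$), after which one must verify at each of your nesting levels that the bad set is a small fraction of a set of size only $\Theta(\alpha pn)$; this is delicate, and the explicit constant $\tfrac1{16}\alpha^4p^5$ would not survive it. The fix is simply to reorder the construction as the paper does, so that all three sets in the one needed application of the mixing lemma have size $\alpha pn$.
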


\begin{proof} 
Let $B = \{b_1,b_2,b_3\} \subseteq V(H)$.  First, there are at least $\frac{1}{8} \alpha pn^3$ edges
disjoint from $B$; let $\{x_1,x_2,x_3\}$ be such an edge.  For $1 \leq i \leq 3$, let $Y_i \subseteq
N(b_i, x_i) = \{y : yx_ib_i \in H\}$ with $|Y_i| = \alpha pn$.  Such a $Y_i$ exists since the
minimum codegree is at least $\alpha pn$.  By the expander mixing lemma
(Proposition~\ref{prop:expandermixing}),
\begin{align*}
  e(Y_1,Y_2,Y_3) \geq p|Y_1||Y_2||Y_3| - \lambda_2(H) \sqrt{|Y_1||Y_2||Y_3|}
  \geq \alpha^3 p^4 n^3 - \lambda_2(H) \alpha^{3/2} p^{3/2} n^{3/2}.
\end{align*}
Since $\lambda_2(H) \leq \frac{1}{2} \alpha^2 p^{5/2} n^{3/2}$,
\begin{align*}
  e(Y_1,Y_2,Y_3) \geq \alpha^3 p^4 n^3 - \frac{1}{2}\alpha^{7/2} p^4 n^3 \geq \frac{1}{2} \alpha^3 p^4 n^3.
\end{align*}
Let $\{y_1,y_2,y_3\}$ be an edge with $y_1 \in Y_1$, $y_2 \in Y_2$, and $y_3 \in Y_3$.  Then
$\{x_1,x_2,x_3,y_1,y_2,y_3\}$ is a six-set that edge-absorbs $B$ and there are at least
$\frac{1}{8}\alpha pn^3 \cdot \frac{1}{2} \alpha^3 p^4 n^3 = \frac{1}{16} \alpha^4 p^5 n^6$ such
sets.
\end{proof} 

\begin{proof}[Proof of Theorem~\ref{thm:sparse}] 
We are given $\alpha > 0$ such that $\delta_2(H) \geq \alpha pn$.  Let $\gamma = 2^{-22}
\alpha^{12}$.

First, we can assume that $p \geq \gamma n^{-1/10} \log^{1/5} n$.  Indeed, by averaging there exists
vertices $s_1, s_2$ such that the codegree of $s_1$ and $s_2$ is at most $2pn$.  Then taking $S_1 =
\{s_1\}$, $S_2 = \{s_2\}$, and $S_3$ as the non-coneighbors of $s_1$ and $s_2$,
Proposition~\ref{prop:expandermixing} shows that
\begin{align*}
  \lambda_2(H) \geq p \sqrt{|S_3|} \geq p \sqrt{(1-2p) n}.
\end{align*}
But by assumption, $\lambda_2(H) \leq \gamma p^{16} n^{3/2}$.  Therefore,
\begin{align*}
  p \sqrt{(1-2p)n} \leq \gamma p^{16} n^{3/2}
\end{align*}
which implies that $p \geq \gamma n^{-1/10} \log^{1/5} n$ (by a large margin).

By Lemma~\ref{lem:sparse-edge-absorb}, for every $B \subseteq V(H)$ with $|B| = 3$ there are at
least $\frac{1}{16} \alpha^4 p^5 n^6$ $6$-sets $A \subseteq V(H)$ which edge-absorb $B$.  Let $\ell
= \frac{1}{16} \alpha^4 p^5 n^6$.  If $n$ is sufficiently large, then since $p \geq \gamma n^{-1/10}
\log^{1/5} n$, we have that $\ell = \frac{1}{16} \alpha^4 p^5 n^6 \geq \frac{1}{16} \alpha^4
\gamma^5
n^{5.5} \log n$.  Let $c = \frac{1}{16} \alpha^4 \gamma^5$ so that $\ell \geq c n^{5.5} \log n$.  Now by
Lemma~\ref{lem:sparse-absorbing}, if $n$ is sufficiently large there exists $A \subseteq V(H)$
such that $A$ edge-absorbs all sets of size a multiple of three and at most
\begin{align} \label{eq:sparse-size-absorb}
  \frac{1}{64} \ell^2 n^{-11} = \frac{1}{2^{14}} \alpha^8 p^{10} n.
\end{align}
We now show how to construct a perfect matching in $H$.  First, greedily construct a matching in
$H[V(H) \setminus A]$.  Say the greedy procedure halts with $B \subseteq V(H) \setminus A$ as the
unmatched vertices.  Since $3|v(H)$ and $3| |A|$ (since $A$ is an edge-absorbing set), $3| |B|$.  By
Proposition~\ref{prop:expandermixing} (recall that $\gamma = 2^{-22} \alpha^{12}$),
\begin{align} \label{eq:sparse-eBBB}
  e(B,B,B) = p|B|^3 \pm \lambda_2(H) |B|^{3/2} \geq p|B|^3 - \frac{1}{2^{22}}\alpha^{12} p^{16} n^{3/2} |B|^{3/2}.
\end{align}
If $|B| \geq 2^{-14} \alpha^8 p^{10} n$, then
\begin{align*}
  p|B|^3 \geq p |B|^{3/2} \left( \frac{1}{2^{14}} \alpha^8 p^{10} n \right)^{3/2}
  = \frac{1}{2^{21}} \alpha^{12} p^{16} n^{3/2} |B|^{3/2}.
\end{align*}
Combining this with \eqref{eq:sparse-eBBB} shows that $e(B,B,B) > 0$.  This contradicts
that the greedy procedure halted with $B$ as the unmatched vertices.  Thus $|B| \leq
2^{-14} \alpha^8 p^{10} n$ and then \eqref{eq:sparse-size-absorb} shows that $A$
edge-absorbs $B$, producing a perfect matching of $H$.
\end{proof} 

\medskip

\textit{Acknowledgements:} We would like to thank Alan Frieze for helpful discussions at the early
stages of this project.  Thanks also to Daniela K\"uhn and Sebastian Cioab{\u{a}} for useful
comments.

\bibliographystyle{abbrv}
\bibliography{refs.bib}

\end{document}